\tikzstyle{b_vertex}=[circle,fill=black!100,text=white,inner sep=0.8mm,draw]
\tikzstyle{w_vertex}=[circle,fill=white!100,text=white,inner sep=0.8mm,draw]
\tikzstyle{point}=[circle,fill=black,inner sep=0.1mm]
\tikzstyle{path_edge}=[thick]
\newtheorem{theorem}{Theorem}
\newtheorem{lemma}[theorem]{Lemma}
\newtheorem{claim}{Claim}
\newtheorem{corollary}{Corollary}
\newtheorem*{conjecture*}{Conjecture}
\newcommand{\dist}{\delta} % Euclidean distance
\newcommand{\sh}{\Lambda} % Height of the strips in representation of G*
\newcommand{\CC}{S} % Special point 1 (previously, called C) in the proof of Lemma 23
\newcommand{\FF}{Q} % Special point 2 (previously, called F) in the proof of Lemma 23
\newcommand{\cc}{s} % Coordinate of special point 1 (previously c)
\newcommand{\ff}{q} % Coordinate of special point 2 (previously f)
\newcommand{\mm}{\mu} % length between legs in representation of lobster
\newcommand{\GG}{\mathcal{G}} % color class of green vertices in Theorem 15
\newcommand{\BB}{\mathcal{B}} % color class of blue vertices in Theorem 15
\newcommand{\RR}{\mathcal{R}} % color class of red vertices in Theorem 15
\newcommand{\compcent}[1]{\vcenter{\hbox{$#1\circ$}}}
\newcommand{\comp}{\mathbin{\mathchoice
  {\compcent\scriptstyle}{\compcent\scriptstyle}
  {\compcent\scriptscriptstyle}{\compcent\scriptscriptstyle}}}
\author{Aistis Atminas\thanks{School of Science and Technology, Nottingham Trent University, Nottingham NG11 8NS, UK. E-mail: Aistis.Atminas@ntu.ac.uk.} 
\and 
Viktor Zamaraev\thanks{Mathematics Institute, University of Warwick, Coventry CV4 7AL, UK. E-mail: V.Zamaraev@warwick.ac.uk. The author acknowledge support from EPSRC, grant EP/L020408/1; and
from Russian Foundation for Basic Research, grant 14-01-00515-a.}}
\date{}
\title{On forbidden induced subgraphs for unit disk graphs}
\let\oldr@@t\r@@t
\def\r@@t#1#2{%
\setbox0=\hbox{$\oldr@@t#1{#2\kern 0.08em}$}\dimen0=\ht0
\advance\dimen0-0.2\ht0
\setbox2=\hbox{\vrule height\ht0 depth -\dimen0}%
{\box0\lower0.4pt\box2}}
\LetLtxMacro{\oldsqrt}{\sqrt}
\renewcommand*{\sqrt}[2][\ ]{\oldsqrt[#1]{#2}}
\begin{document}

\maketitle

\begin{abstract}
	A unit disk graph is the intersection graph of disks of equal radii in the plane. The class
	of unit disk graphs is hereditary, and therefore admits a characterization in terms of 
	minimal forbidden induced subgraphs. In spite of quite active study of unit disk graphs
	very little is known about minimal forbidden induced subgraphs for this class. 
	We found only finitely many minimal non unit disk graphs in the literature. In this paper
	we study in a systematic way forbidden induced subgraphs for the class of unit disk
	graphs. We develop several structural and geometrical tools, and use them to
	reveal infinitely many new minimal non unit disk graphs. Further we use these results
	to investigate structure of co-bipartite unit disk graphs.
	In particular, we give structural characterization of those co-bipartite unit disk graphs whose edges
	between parts form a $C_4$-free bipartite graph, and show that bipartite complements
	of these graphs are also unit disk graphs. Our results lead us to
	propose a conjecture that the class of co-bipartite unit disk graphs is closed under
	bipartite complementation.
\end{abstract}

%%%%%%%%%%%%%%%%%%%%%%%%%%%%%%%%%%%%%%%%%%%
%%%%%%%%%%%%%%%%%%%%%%%%%%%%%%%%%%%%%%%%%%%
%% Introduction
%%%%%%%%%%%%%%%%%%%%%%%%%%%%%%%%%%%%%%%%%%%
%%%%%%%%%%%%%%%%%%%%%%%%%%%%%%%%%%%%%%%%%%%

%%%%%%%%%%%%%%%%%%%%%%%%%%%%%%%%%%%%
%%%%%%%%%%%%%%%%%%%%%%%%%%%%%%%%%%%%
\section{Introduction}
%%%%%%%%%%%%%%%%%%%%%%%%%%%%%%%%%%%%
%%%%%%%%%%%%%%%%%%%%%%%%%%%%%%%%%%%%

A graph is \textit{unit disk graph} (UDG for short) if its vertices can be represented
as points in the plane such that two vertices are adjacent if and only if the corresponding
points are at distance at most 1 from each other. 
Unit disk graphs has been very actively studied in recent decades. One of the reasons 
for this is that UDGs appear to be useful in number of applications. Perhaps a major
application area for UDGs is wireless networks. Here a UDG is used to model the topology
of a network consisting of nodes that communicates by means of omnidirectional 
antennas with equal transmission-reception range. Many research projects aimed at
designing algorithms for different graph optimization problems specifically on unit disk
graphs, as solutions to these problems are of practical importance for efficient operation
of modeled networks. We refer the reader to \cite{BB09,B96} and references therein 
for more details on applications of UDGs.

The class of unit disk graphs is \textit{hereditary}, that is closed under 
vertex deletion or, equivalently, closed under induced 
subgraphs\footnote{All subgraphs in this paper are induced and further we sometimes omit word `induced'.}.
It is well known and can be easily proved that every hereditary class of graphs admits
characterization in terms of minimal forbidden induced subgraphs. Formally, for
a hereditary class $\mathcal{X}$ there exists a unique minimal under inclusion set of 
graphs $M$ such that $\mathcal{X}$ coincides with the family $Free(M)$ of graphs none of which contains a graph from $M$ as an induced subgraph. Graphs in $M$ are called
\textit{minimal forbidden induced subgraphs} for $\mathcal{X}$. 
Such an obstructive specification of a hereditary class may be useful for investigation of 
its structural, algorithmic and combinatorial properties. For instance, forbidden subgraphs
characterization of a class may be helpful in testing whether a graph belongs to the 
class or not. In particular, if the set of minimal forbidden subgraphs is finite, then, clearly, 
the problem of recognizing graphs in the class is polynomially solvable.
However, describing a hereditary class in terms of its minimal
forbidden induced subgraphs may be extremely hard problem. For example, for the class
of perfect graphs it took more than 40 years to obtain forbidden subgraph characterization
\cite{CRST06}.

Despite extensive study of the class of unit disk graphs very little is known about its forbidden induced subgraphs. We found only few minimal
non unit disk graphs in the literature, namely, $K_{1,6}$, %\cite{H80, MBHRR95}, 
$K_{2,3}$, % \cite{H80, L04}, 
and five other graphs
(see Figure \ref{fig:knownMinForb}) \cite{H80, HS95}.
%, where $K_{s,p}$ denotes the complete bipartite graph with $s$ and $p$ vertices in the parts, respectively
However, unless $\textup{P} = \textup{NP}$, the set of minimal forbidden induced subgraphs is infinite, 
since the problem of recognizing unit disk graphs is known to be NP-hard \cite{BK98}.
Interestingly, only the fact that unit disk graphs avoid $K_{1,6}$ already turned out to be useful 
in algorithms design.
For example, the fact was utilized in \cite{MBHRR95} for obtaining 3-approximation algorithm 
for the maximum independent set problem and 5-approximation algorithm for the dominating set problem.
In \cite{FFSM14} da Fonseca et al. used additional geometrical restrictions of UDGs to design an algorithm
for the latter problem with better approximation factor $44/9$. 
The authors pointed out that further improvement 
may require new information about forbidden induced subgraphs for UDGs, and in a subsequent paper \cite{FSMF15} they developed algorithm for recognizing UDGs. Unfortunately, (though, not surprising as 
the corresponding problem is NP-hard) in worst cases the algorithm works exponential time, and the experimental results are available only for small graphs and do not discover any new minimal forbidden
subgraphs.

In the present paper we systematically study forbidden induced subgraphs for the class
of unit disk graphs, and reveal infinitely many new minimal forbidden subgraphs. 
For example,
we show that all complements of even cycles with at least eight vertices are minimal
non-UDGs. In contrast, all complements of odd cycles are UDGs. 
We use the obtained results to investigate structure of co-bipartite unit disk graphs.
%the subclass which is ``bottleneck'' for implicit representation of unit disk graphs.
Specifically, we characterize the class of $C_4^*$-free co-bipartite
UDGs, that is co-bipartite UDGs whose edges between parts form a bipartite graph without
cycle on four vertices. Further we show that bipartite complement of every $C_4^*$-free 
co-bipartite UDG is also (co-bipartite) UDG. This fact and the structure of the set of 
found obstructions leads us to pose a conjecture that
the class of co-bipartite UDGs is closed under bipartite complementation.

The paper is organized as follows. In Section \ref{sec:term} we introduce necessary 
definitions and notation. In Section \ref{sec:tools} we develop auxiliary geometrical and 
structural tools that may be of their own interest. Using these tools we derive new minimal forbidden induced subgraphs in Section \ref{sec:forb}. In Section \ref{sec:structure} we give
structural characterization of certain classes of co-bipartite UDGs. 
In the last Section \ref{sec:conclusion} we discuss the results and open problems.

\begin{figure}[H]
	\centering
	\begin{tikzpicture}[scale=1,auto=left]
	% K_{1,6}
		\node[w_vertex] (1) at (0,0) { }; 	
		\node[w_vertex] (2) at (0,1) { };
		\node[w_vertex] (3) at (0,-1) { };
		\node[w_vertex] (4) at (0.866,0.5) { };
		\node[w_vertex] (5) at (-0.866,0.5) { };
		\node[w_vertex] (6) at (0.866,-0.5) { };
		\node[w_vertex] (7) at (-0.866,-0.5) { };

		\foreach \from/\to in {1/2,1/3,1/4,1/5,1/6, 1/7}
	    	\draw (\from) -- (\to);
	    	
	    	\coordinate [label=center:$K_{1,6}$] (S333) at (0,-1.75);

	% K_{2,3}
		\node[w_vertex] (1) at (4,0) { }; 	
		\node[w_vertex] (2) at (3,0) { };
		\node[w_vertex] (3) at (5,0) { };
		\node[w_vertex] (4) at (4,1) { };
		\node[w_vertex] (5) at (4,-1) { };

		\foreach \from/\to in {1/4,1/5,2/4,2/5,3/4,3/5}
	    	\draw (\from) -- (\to);
	    	
	    	\coordinate [label=center:$K_{2,3}$] (K23) at (4,-1.75);

	% G_1
		\node[w_vertex] (1) at (7.75,0) { }; 	
		\node[w_vertex] (2) at (7.75,1) { };
		\node[w_vertex] (3) at (7.75,-1) { };
		\node[w_vertex] (4) at (8.5,0.5) { };
		\node[w_vertex] (5) at (7,0.5) { };
		\node[w_vertex] (6) at (8.5,-0.5) { };
		\node[w_vertex] (7) at (7,-0.5) { };
		
		\foreach \from/\to in {1/2,1/3,2/4,2/5,3/6,3/7,4/6,5/7}
	    	\draw (\from) -- (\to);
	    	
	    	\draw (5) to[out=60] (8,1.3) to[out=-40,in=110] (4);
	    	
	    	\coordinate [label=center:$G_1$] (G1) at (7.75,-1.75);
	    	
	% G_2
		\node[w_vertex] (1) at (11.25,0) { }; 	
		\node[w_vertex] (2) at (11.25,1) { };
		\node[w_vertex] (3) at (11.25,-1) { };
		\node[w_vertex] (4) at (12,0.5) { };
		\node[w_vertex] (5) at (10.5,0.5) { };
		\node[w_vertex] (6) at (12,-0.5) { };
		\node[w_vertex] (7) at (10.5,-0.5) { };
		
		\foreach \from/\to in {1/2,1/3,2/4,2/5,3/6,3/7,4/6,5/7}
	    	\draw (\from) -- (\to);
	    	
	    	\draw (5) to[out=60](11.5,1.25) to (12.3,0.6) to[out=-60,in=60] (6);
	    	
	    	\coordinate [label=center:$G_2$] (G2) at (11.25,-1.75);

	% G_3
		\node[w_vertex] (1) at (1.75,-3) { }; 	
		\node[w_vertex] (2) at (1.75,-3.8) { };
		\node[w_vertex] (3) at (1.25,-4.5) { };
		\node[w_vertex] (4) at (2.5,-3.5) { };
		\node[w_vertex] (5) at (1,-3.5) { };
		\node[w_vertex] (6) at (2.25,-4.5) { };

		\foreach \from/\to in {1/4,1/5,2/4,2/5,3/6,3/5,4/6}
	    	\draw (\from) -- (\to);
	    	
	    	\coordinate [label=center:$G_3$] (G3) at (1.75,-5.25);	
	    	
	% G_4
		\node[w_vertex] (1) at (5.75,-3.5) { }; 	
		\node[w_vertex] (2) at (5.75,-2.5) { };
		\node[w_vertex] (3) at (5.75,-4.5) { };
		\node[w_vertex] (4) at (6.5,-3) { };
		\node[w_vertex] (5) at (5,-3) { };
		\node[w_vertex] (6) at (6.5,-4) { };
		\node[w_vertex] (7) at (5,-4) { };
		
		\foreach \from/\to in {1/4,1/5,2/4,2/5,3/6,3/7,4/6,5/7}
	    	\draw (\from) -- (\to);
	    	
	    	\coordinate [label=center:$G_4$] (G4) at (5.75,-5.25);

	% G_5
		\node[w_vertex] (1) at (8.5,-4.5) { }; 	
		\node[w_vertex] (2) at (9.5,-4.5) { };
		\node[w_vertex] (3) at (10.5,-4.5) { };
		\node[w_vertex] (4) at (8.5,-3.5) { };
		\node[w_vertex] (5) at (9.5,-3.5) { };
		\node[w_vertex] (6) at (10.5,-3.5) { };
		\node[w_vertex] (7) at (9.5,-2.75) { };
		
		\foreach \from/\to in {1/2,2/3,4/5,5/6,1/4,2/5,3/6,4/7,6/7}
	    	\draw (\from) -- (\to);
	    	
	    	\coordinate [label=center:$G_5$] (G5) at (9.5,-5.25); 
	\end{tikzpicture}
	\caption{Known minimal non unit disk graphs}
	\label{fig:knownMinForb}
\end{figure}
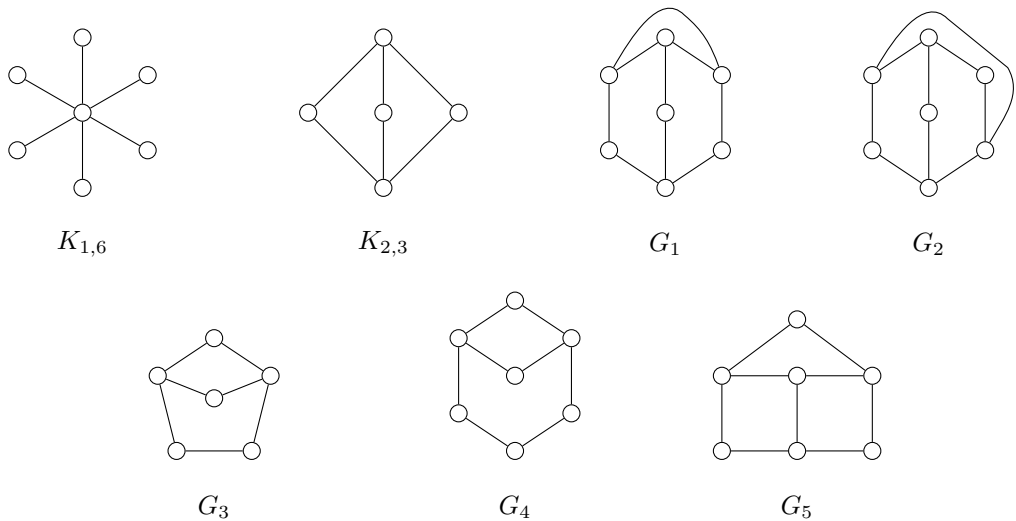

%%%%%%%%%%%%%%%%%%%%%%%%%%%%%%%%%%%%%%%%%%%
%%%%%%%%%%%%%%%%%%%%%%%%%%%%%%%%%%%%%%%%%%%
%% Preliminaries
%%%%%%%%%%%%%%%%%%%%%%%%%%%%%%%%%%%%%%%%%%%
%%%%%%%%%%%%%%%%%%%%%%%%%%%%%%%%%%%%%%%%%%%

%%%%%%%%%%%%%%%%%%%%%%%%%%%%%%%%%%%%
%%%%%%%%%%%%%%%%%%%%%%%%%%%%%%%%%%%%
\section{Preliminaries}\label{sec:term}
%%%%%%%%%%%%%%%%%%%%%%%%%%%%%%%%%%%%
%%%%%%%%%%%%%%%%%%%%%%%%%%%%%%%%%%%%

Let $(V,E)$ denote a graph with vertex set $V$ and edge set $E$. An edge connecting
vertices $u$ and $v$ is denoted $uv$.
For a graph $G$ by $V(G)$ and $E(G)$ we denote the vertex set and the edge set
of $G$, respectively. The complement of a graph $G$ is denoted as $\overline{G}$.
For a vertex $v$  and a set $A \subseteq V(G)$, $N(v)$ denotes the set
of neighbours of $v$, and $N_A(v) = N(v) \cap A$.
Given a subset $A \subseteq V(G)$, $G[A]$ denotes the subgraph
of $G$ induced by $A$, and $G \setminus A$ denotes a graph 
obtained from $G$ by removing vertices in $A$. 
If $A = \{v\}$, then we omit braces and write $G \setminus v$.
A vertex of a graph $G$ is \textit{pendant} if it has exactly one neighbour in $G$.
A set of pairwise non-adjacent vertices in a graph is called an
\textit{independent set}, and a set of pairwise adjacent vertices is a \textit{clique}.
A graph is \textit{bipartite} if its vertex set can be partitioned into two independent sets.
By $(U,W,E)$ we denote a bipartite graph with fixed partition of its vertex set into two
independent sets $U$ and $W$, and edge set $E$.
%vertex set $U \cup W$ and edge set $E$, where $U$ and $W$ are independent sets. 
A graph is \textit{co-bipartite} if its vertex set can be partitioned into two cliques. 
By $(U,W,E)_c$ we denote a co-bipartite graph with fixed partition of its vertex set into two
cliques $U$ and $W$, and set $E$ of edges connecting vertices in different parts of the graph.
%vertex set $U \cup W$ and edge set $E$, where $U$ and $W$ are cliques.
Let $G$ be a bipartite graph $(U,W,E)$ (a co-bipartite graph $(U,W,E)_c$, respectively) with
fixed bipartition $U \cup W$, then by $\overline{G^b}$ we denote the \textit{bipartite complement} 
of $G$, that is the bipartite graph $(U,W, (U \times W) \setminus E)$ 
(the co-bipartite graph $(U,W, (U \times W) \setminus E)_c$, respectively).  
Also by $G^*$ we denote the graph obtained from $G$ 
by complementing its subgraphs $G[U]$ and $G[W]$, i.e. $G^* = (U,W,E)_c$ ($G^* = (U,W,E)$, respectively).
%Also for a bipartite graph $G=(U,W,E)$ by $G^*$ we denote the co-bipartite graph obtained from $G$ 
%by turning independent sets $U$ and $W$ into cliques, i.e. $G^* = (U,W,E)_c$. 
As usual, $K_n$, $P_n$ and $C_n$ denote a complete $n$-vertex graph, a chordless path on $n$ 
vertices and a chordless cycle on $n$ vertices, respectively.

A graph $G=(V,E)$ is a \textit{unit disk graph} (UDG for short) if there exists a function 
$f : V \rightarrow \mathbb{R}^2$ such that $uv \in E$ if and only if 
$\dist(f(u),f(v)) \leq 1$, where $\dist(a,b)$ is the Euclidean distance between two
points $a,b \in \mathbb{R}^2$. Function $f$ is called a \textit{UDG-representation} 
(or simply \textit{representation}) of $G$. 
For two vertices $u,v \in V(G)$ the distance $\dist(f(u),f(v))$ between the images
of $u$ and $v$ under a representation $f$ is denoted $\dist_f(u,v)$, or simply $\dist(u,v)$, 
when the context is clear. For a set of vertices $U \subseteq V(G)$, $f(U)$ denotes the set
of images of vertices in $U$, i.e. $f(U) = \{ f(u) : u \in U \}$.

%For a fixed UDG-representation $p$ 
%of a unit disk graph $G$ and a vertex $v \in V(G)$ we denote by $p_{v}$ the point $p(v)$.
Let $S$ be a finite set of points in $\mathbb{R}^2$. By $\textup{Conv}(S)$ we denote 
the convex hull of $S$. A point $x \in S$ that does not belong to the convex hull
$\textup{Conv}(S \setminus \{x\})$ is called an \textit{extreme point} 
of $\textup{Conv}(S)$. For two distinct points $a,b \in \mathbb{R}^2$ we denote by $L(a,b)$
the line through the points and by $[a,b]$ the line 
segment joining $a$ and $b$. 
The distance between two parallel lines $L_1$ and $L_2$ is denoted by $\dist(L_1,L_2)$.
We say that two line segments $[a,b]$ and $[c,d]$
\textit{cross} if their intersection consists of a single point different from $a,b,c$ and $d$.
For three non-collinear points $a,b,c$ the triangle with vertices $a,b,c$ is denoted 
by $\triangle abc$, and $\angle abc$ denotes the angle between sides $[a,b]$ and $[b,c]$
of the triangle.
%Let $G=(U, W, E)$ be a (co-)bipartite graph with parts $U$ and $W$. We denote by $G^*$ 
%the graph obtained from $G$ by complementing subgraphs $G[U]$ and $G[W]$.
%Let $\overline{G^b}$ be a bipartite complement of the (co)-bipartite graph
%$G=(U,W, E)$, so $\overline{G^b}=(U, W, U \times W \setminus E)$. 
We will denote a point in Cartesian coordinate system as $(x, y)$, and in polar as $(r,\alpha)_p$ such that
$(r, \alpha)_p=(r \sin(\alpha), r \cos(\alpha))$.

In Sections \ref{ss:C4rep}-\ref{ss:2K2rep} dealing with UDG-representations we will make frequent 
use of following basic inequalities and equations:
\begin{align}
1-\frac{x}{2}-\frac{x^2}{2} \leq  \sqrt{1-x}  \leq   1-\frac{x}{2} \label{eq1} \\
 x - \frac{x^3}{6} \leq \sin(x) \leq  x \label{eq2} \\
\cos(2\beta)=\cos^2(\beta)-\sin^2(\beta) \label{eq3}\\
\sin(2\beta)=2\sin(\beta)\cos(\beta) \label{eq4}\\
\dist(a,b)^2+\dist(b,c)^2-2\cos(\angle abc)\dist(a,b)\dist(b,c)=\dist(a,c)^2 \label{eq5}
\end{align} 

\noindent
The inequalities (1) and (2) hold for all $x \in [-1,1]$ and $x \geq 0$, respectively. Both are coming from truncated Taylor series expansions, but one can also find direct proofs of these facts, by squaring (1) and considering derivatives in (2). The equations (3) and (4) are standard facts and hold for all $\beta \in \mathbb{R}$. The equation (5) is known as the Law of cosines and holds for any triangle $abc$.
%with $\angle ABC$ being the angle between the sides $AB$ and $BC$ and opposite to the side $AC$ .

%%%%%%%%%%%%%%%%%%%%%%%%%%%%%%%%%%%%%%%%%%%
%%%%%%%%%%%%%%%%%%%%%%%%%%%%%%%%%%%%%%%%%%%
%% Tools
%%%%%%%%%%%%%%%%%%%%%%%%%%%%%%%%%%%%%%%%%%%
%%%%%%%%%%%%%%%%%%%%%%%%%%%%%%%%%%%%%%%%%%%

%%%%%%%%%%%%%%%%%%%%%%%%%%%%%%%%%%%%
%%%%%%%%%%%%%%%%%%%%%%%%%%%%%%%%%%%%
\section{Tools}\label{sec:tools}
%%%%%%%%%%%%%%%%%%%%%%%%%%%%%%%%%%%%
%%%%%%%%%%%%%%%%%%%%%%%%%%%%%%%%%%%%

In this section we develop several geometric and structural tools which are helpful in
further sections, though may be of their own interest.

%%%%%%%%%%%%%%%%%%%%%%%%%%%%%%%%%%%%%%%%%%%
%% Basic tools
%%%%%%%%%%%%%%%%%%%%%%%%%%%%%%%%%%%%%%%%%%%

%%%%%%%%%%%%%%%%%%%%%%%%%%%%%%%%%%%%
%%%%%%%%%%%%%%%%%%%%%%%%%%%%%%%%%%%%
\subsection{Basic tools}
%%%%%%%%%%%%%%%%%%%%%%%%%%%%%%%%%%%%
%%%%%%%%%%%%%%%%%%%%%%%%%%%%%%%%%%%%

We use the following obvious claim.
\begin{claim}\label{cl:triangle}
	Let $a,b,c \in \mathbb{R}^2$ be three non-collinear points such that 
	$\dist(a,b) \leq 1$ and $\dist(a,c) \leq 1$. Then $\dist(a,d) \leq 1$ for every point 
	$d \in \triangle abc$.
\end{claim}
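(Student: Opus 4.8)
The plan is to reduce the claim to the convexity of a disk. Define the closed unit disk centered at $a$,
\[
D = \{\, x \in \mathbb{R}^2 : \dist(a,x) \leq 1 \,\}.
\]
The whole statement then amounts to showing $\triangle abc \subseteq D$, since membership in $D$ is exactly the condition $\dist(a,d) \leq 1$.

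The key steps are as follows. First I would record that $D$ is convex: it is a Euclidean ball, and Euclidean balls are convex (if $\dist(a,x) \leq 1$ and $\dist(a,y) \leq 1$, then for $t \in [0,1]$ the triangle inequality gives $\dist(a,\,tx+(1-t)y) \leq t\,\dist(a,x) + (1-t)\,\dist(a,y) \leq 1$). Next I would check that all three vertices of the triangle lie in $D$: we have $\dist(a,a) = 0 \leq 1$ trivially, while $\dist(a,b) \leq 1$ and $\dist(a,c) \leq 1$ are given by hypothesis, so $a,b,c \in D$. Finally, since $\triangle abc$ is by definition the convex hull $\textup{Conv}(\{a,b,c\})$ and $D$ is a convex set containing $\{a,b,c\}$, convexity yields $\triangle abc = \textup{Conv}(\{a,b,c\}) \subseteq D$. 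Hence every $d \in \triangle abc$ satisfies $\dist(a,d) \leq 1$, as claimed.

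There is essentially no hard part here, which matches the claim being labelled \emph{obvious}; the only thing one must invoke is the convexity of the disk. If one prefers an even more elementary route that avoids naming convexity explicitly, I would instead extend the ray from $a$ through an arbitrary interior point $d$ until it meets the opposite side $[b,c]$ at a point $e$. Then $d \in [a,e]$ gives $\dist(a,d) \leq \dist(a,e)$, and since $e$ lies on the segment $[b,c]$ the function measuring distance from $a$ along this segment is maximized at an endpoint, so $\dist(a,e) \leq \max\{\dist(a,b),\dist(a,c)\} \leq 1$. Either way the bound follows, and the non-collinearity hypothesis is used only to guarantee that $\triangle abc$ is a genuine (nondegenerate) triangle.
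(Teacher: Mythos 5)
Your proof is correct. Note that the paper itself supplies no proof of this claim at all --- it is introduced with ``We use the following obvious claim'' and stated without argument --- so your write-up simply makes explicit the justification the authors leave implicit, and your first route (the closed unit disk $D$ centered at $a$ is convex and contains $a$, $b$, $c$, hence contains $\triangle abc = \textup{Conv}(\{a,b,c\})$) is exactly the standard argument one would expect. Your alternative elementary route is also sound: for $d$ in the triangle, the ray from $a$ through $d$ meets $[b,c]$ at a point $e$ with $\dist(a,d) \leq \dist(a,e)$, and since $t \mapsto \dist\bigl(a,(1-t)b+tc\bigr)$ is a convex function of $t$ it attains its maximum at an endpoint, giving $\dist(a,e) \leq \max\{\dist(a,b),\dist(a,c)\} \leq 1$.
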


Informally, the following lemma says that any UDG-representation of a $C_4$
is a convex quadrilateral with sides corresponding to the edges of the $C_4$.

\begin{lemma}[Convexity of $C_4$]\label{lem:convC4}
	Let $G=(V,E)$ be a UDG and let a subset $\{ v_1,v_2,v_3,v_4 \} \subseteq V$ 
	induces a $C_4$ in $G$ such that 
	$\{ v_1v_2, v_2v_3, v_3v_4, v_4v_1 \} \subseteq E$. Then
	for any representation $f$ of the graph, 
	$\textup{Conv}(p_1, p_2, p_3, p_4)$ 
	is a quadrangle, and $[p_1,p_3]$ and $[p_2,p_4]$ cross, where $p_i = f(v_i)$, $i=1, \ldots, 4$.
\end{lemma}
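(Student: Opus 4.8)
The plan is to prove the stronger combinatorial statement that the two non-edges of the $C_4$, namely $[p_1,p_3]$ and $[p_2,p_4]$, are exactly the crossing pair; everything else follows, since two segments with four distinct endpoints meet in a single interior point precisely when their endpoints are the vertices of a convex quadrilateral having those segments as diagonals. Throughout the hypothesis reads $\dist(p_1,p_2),\dist(p_2,p_3),\dist(p_3,p_4),\dist(p_4,p_1)\le 1$, while $\dist(p_1,p_3),\dist(p_2,p_4)>1$ because the $C_4$ is induced. First I would record that the four points are pairwise distinct: $\dist(p_1,p_3)>1$ forces $p_1\ne p_3$ (and likewise $p_2\ne p_4$), while if two cycle-adjacent images coincided, say $p_1=p_2$, then $\dist(p_1,p_3)=\dist(p_2,p_3)$ would be simultaneously $>1$ and $\le 1$.

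Next I would establish that the four points are in convex position. I would first rule out that all four are collinear: the pair realising the maximum distance has length $>1$ (otherwise the graph is a clique), hence is one of the two non-edges, say $\{p_1,p_3\}$; placing them at $0$ and $M=\dist(p_1,p_3)>1$ on the line (so $p_2,p_4$ lie between them), the edge constraints pin $p_2,p_4$ into the interval $[M-1,1]$, whence $\dist(p_2,p_4)\le 2-M<1$, contradicting that $\{p_2,p_4\}$ is also a non-edge. With the all-collinear case excluded, suppose some $p_i$ lies in the convex hull of the other three; those three cannot be collinear (else all four would be), so they form a genuine triangle containing $p_i$. Let $p_m$ be the unique $C_4$-non-neighbour of $p_i$; then $p_m$ is a vertex of that triangle and is adjacent to its other two vertices, so Claim~\ref{cl:triangle} with apex $p_m$ gives $\dist(p_m,p_i)\le 1$, contradicting that $p_mp_i$ is a non-edge. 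Hence every point is extreme and $\textup{Conv}(p_1,p_2,p_3,p_4)$ is a quadrangle.

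It remains to identify the diagonals. The two non-edges $\{p_1,p_3\}$ and $\{p_2,p_4\}$ form a perfect matching of the four vertices, and in a convex quadrilateral the only perfect matchings are the pair of diagonals and the two pairs of opposite sides. I would rule out the side possibility: if the non-edges were a pair of opposite sides, then the two diagonals would be edges of the $C_4$, each of length $\le 1$; letting $x$ be the interior intersection point of the diagonals, the two diagonal lengths sum to $\sum_{i}\dist(x,p_i)\le 2$, whereas the triangle inequality through $x$ yields $\dist(p_1,p_3)+\dist(p_2,p_4)\le \sum_i \dist(x,p_i)$, so the two non-edges would have total length at most $2$ — impossible, since each exceeds $1$. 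Therefore the non-edges are the diagonals, and the diagonals of a convex quadrilateral cross, which is exactly the assertion.

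The conceptual crux is this last step, pinning down which pairs are the diagonals: the naive worry is a "bad" cyclic order in which a long non-edge appears as a hull side, and the triangle-inequality-at-the-crossing-point estimate is what disposes of it cleanly and uniformly. The only genuinely fiddly part is the bookkeeping of the degenerate collinear configurations in the convex-position step, but these are dispatched by the short direct estimates above rather than by Claim~\ref{cl:triangle}.
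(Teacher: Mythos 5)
Your proof is correct and follows essentially the same route as the paper's: convex position is forced by Claim~\ref{cl:triangle} applied with the non-neighbour of the interior point as apex, and the bad cyclic order (non-edges as opposite sides) is excluded by a triangle inequality at the crossing point of the diagonals. The only deviations are bookkeeping — you dispatch the collinear configurations with a direct coordinate estimate (the paper handles them via the same Claim~\ref{cl:triangle} trick) and you sum both non-edge lengths against both diagonals instead of the paper's one-sided comparison with a without-loss-of-generality choice — neither of which changes the substance.
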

\begin{proof}
%	To see that $C_4$ is a UDG, one may consider a realization which maps the vertices
%	of the cycle to the following points $(0,0)$, $(0,1)$, $(1,1)$, $(1,0)$.
%	We show that any realization of $C_4$ satisfies the conditions of the lemma.
	
	First, let us show that no three points in $S = \{ p_1, p_2, p_3, p_4 \}$
	are collinear, i.e. no three points in $S$ lie on the same line. Indeed, 
	assume, that $p_1, p_2$ and $p_3$ lie on the same line. As $v_1v_2$ and $v_2v_3$ are edges 
	of $G$ and $v_1v_3$ is a non-edge, we know that $\dist(p_1, p_2) \leq 1$ and $\dist(p_2, p_3) \leq 1$,
  	while $\dist(p_1,p_3)>1$. From this it follows, that $p_2$ must lie between $p_1$ and $p_3$, and hence, in 
	particular, belongs to the triangle $\triangle p_4 p_1 p_3$. Since $v_4$ is adjacent to $v_1$ and $v_3$, 
	we have $\dist(p_4, p_1) \leq 1$ and $\dist(p_4, p_3) \leq 1$. Hence, Claim~\ref{cl:triangle} now 
	applies to triangle $\triangle p_4 p_1 p_3$ and we deduce that $\dist(p_4, p_2) \leq 1$. But this 
	contradicts the assumption that $v_2v_4$ is a non-edge. By symmetry the same conclusion follows for 
	the other three tripples of points from $S$.

	Suppose now that $\textup{Conv}(S)$ is a triangle. Without loss of generality
	let $p_1,p_2,p_3$ be the extreme points of the triangle. As $v_1v_2, v_2v_3$ are edges of $G$, 
	we have $\dist(p_2, p_1) \leq 1$ and $\dist(p_2,p_3) \leq 1$. By Claim \ref{cl:triangle} applied to 
	triangle $\Delta p_2 p_1 p_3$, we deduce that $\dist(p_2, p_4) \leq 1$. But this contradicts the
	the assumption that $v_2v_4$ is a non-edge. 
	
	Finally, suppose that $\textup{Conv}(S)$ is a quadrangle and $[p_1, p_3]$ 
	and $[p_2, p_4]$ do not cross, i.e. these segments are two opposite sides of the quadrangle.
	As these segments have both length greater than 1,  we'll show that this implies that one of the 
	diagonals of the quadrangle must be of size greater than 1 as well and hence a contradiction. 
	Consider the case when $[p_1, p_4]$, $[p_2, p_3]$ forms the diagonals
 	of the quadrilateral and crosses at some point 	$q$. Without loss of generality, let 
	$\dist(q, p_3) \leq \dist(q, p_4)$. 
	By triangle inequality 
	$$
		1 < \dist(p_1, p_3) \leq \dist(p_1,q) + 
		\dist(q, p_3) \leq \dist(p_1, q) + \dist(q, p_4) = 
		\dist(p_1, p_4) \leq 1,
	$$
	a contradiction. Similarly, we arrive at a contradiction if we assume that the diagonals
	 of the quadrangle are $[p_1,p_2]$ and $[p_3,p_4]$. These contradictions prove 
	that $[p_1, p_3]$ and $[p_2, p_4]$ must cross and finish the proof of the lemma.

\end{proof}

\begin{corollary}\label{cor:C4same_side}
	Let $G=(V,E)$ be a UDG and let a subset $\{ v_1,v_2,v_3,v_4 \} \subseteq V$ 
	induces a $C_4$ in $G$ such that 
	$\{ v_1v_2, v_2v_3, v_3v_4, v_4v_1 \} \subseteq E$. Then
	for any representation $f$ of the graph, $p_3$ and $p_4$ lie on the same side 
	of the line $L(p_1, p_2)$, where $p_i = f(v_i)$, $i=1, \ldots, 4$.
\end{corollary}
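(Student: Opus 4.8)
The plan is to derive Corollary~\ref{cor:C4same_side} directly from the Convexity Lemma~\ref{lem:convC4}, which tells us that $\textup{Conv}(p_1,p_2,p_3,p_4)$ is a quadrangle and that the segments $[p_1,p_3]$ and $[p_2,p_4]$ cross. The crux of the argument is the observation that in a convex quadrilateral the two diagonals cross, whereas the two pairs of opposite sides do not. Since $[p_1,p_3]$ and $[p_2,p_4]$ are established to cross, these two segments must be the diagonals of the quadrangle rather than a pair of opposite sides. Consequently the four points, read off the boundary of the convex hull in cyclic order, must appear as $p_1, p_2, p_3, p_4$ (up to reversal of orientation), so that $[p_1,p_2]$ is a genuine side of the quadrangle.

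First I would record that, by Lemma~\ref{lem:convC4}, the points $p_1,p_2,p_3,p_4$ are the four vertices of a convex quadrangle and that $[p_1,p_3]$, $[p_2,p_4]$ cross. From the crossing I would conclude that $[p_1,p_3]$ and $[p_2,p_4]$ are precisely the diagonals, hence $[p_1,p_2]$ is an edge of the convex quadrangle. Then I would argue about sidedness: because $p_1p_2$ is a side of a convex polygon whose remaining vertices are $p_3$ and $p_4$, all other vertices of the polygon lie strictly on one side of the supporting line $L(p_1,p_2)$. In particular $p_3$ and $p_4$ lie on the same side of $L(p_1,p_2)$, which is exactly the claim. (Note that no three of the points are collinear, as shown inside the proof of Lemma~\ref{lem:convC4}, so $p_3$ and $p_4$ are genuinely off the line and the phrase ``same side'' is unambiguous.)

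The main obstacle, and the only place requiring slight care, is justifying that the crossing of $[p_1,p_3]$ and $[p_2,p_4]$ forces these to be the diagonals rather than opposite sides, and then translating ``$p_1p_2$ is a side'' into the sidedness statement. Both are elementary facts about convex quadrilaterals: a segment joining two vertices of a convex polygon is either an edge or a diagonal, and two edges of a convex quadrilateral never cross in their interiors, so a crossing pair must be the diagonal pair. Given that $[p_1,p_3]$ and $[p_2,p_4]$ are the diagonals, the remaining four vertex-pairs $\{p_1,p_2\}$, $\{p_2,p_3\}$, $\{p_3,p_4\}$, $\{p_4,p_1\}$ are the four sides. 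Finally, convexity guarantees that relative to the line through any one side, every other vertex of the polygon lies in the same closed half-plane; applied to the side $[p_1,p_2]$, this places $p_3$ and $p_4$ on the same side of $L(p_1,p_2)$, completing the proof.
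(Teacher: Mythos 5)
Your proof is correct and takes essentially the approach the paper intends: the paper states this corollary without proof as an immediate consequence of Lemma~\ref{lem:convC4}, and your deduction---the crossing of $[p_1,p_3]$ and $[p_2,p_4]$ forces them to be the diagonals of the convex quadrangle, so $[p_1,p_2]$ is a side and convexity places $p_3$ and $p_4$ in the same half-plane determined by $L(p_1,p_2)$---is exactly that intended argument spelled out. Your parenthetical appeal to the non-collinearity established inside the proof of Lemma~\ref{lem:convC4} also correctly settles that the points lie strictly off the line, so the claim is unambiguous.
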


When we deal with UDG-representations of complements of  graphs the following form of Lemma
\ref{lem:convC4} is more convenient.

\begin{lemma}\label{lem:2K2cross}
	Let $G=(V,E)$ be a graph and vertices $v_1,v_2,v_3,v_4$ induce
	$2K_2$ in $G$ with edges $v_1v_3, v_2v_4 \in E$.
	If $\overline{G}$ is UDG, then for any representation $f$ of $\overline{G}$,
	$\textup{Conv}( p_1, p_2, p_3, p_4 )$ is a quadrangle and 
	$[p_1,p_3]$ and $[p_2,p_4]$ cross, where $p_i = f(v_i)$, $i=1, \ldots, 4$.
\end{lemma}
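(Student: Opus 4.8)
The plan is to reduce Lemma~\ref{lem:2K2cross} directly to the already-established Lemma~\ref{lem:convC4} (Convexity of $C_4$) by passing to the complement. The key observation is that complementation transforms a $2K_2$ into a $C_4$ in a way that preserves exactly the adjacency information needed. Specifically, suppose $v_1,v_2,v_3,v_4$ induce a $2K_2$ in $G$ with edges $v_1v_3$ and $v_2v_4$ (and all four other pairs being non-edges). Then in the complement $\overline{G}$, the edges and non-edges among these four vertices are swapped: the pairs $v_1v_3$ and $v_2v_4$ become non-edges, while the four pairs $v_1v_2, v_2v_3, v_3v_4, v_4v_1$ all become edges. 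Hence $v_1,v_2,v_3,v_4$ induce a $C_4$ in $\overline{G}$, with consecutive vertices along the cycle being $v_1,v_2,v_3,v_4$ and with $\{v_1v_2,v_2v_3,v_3v_4,v_4v_1\} \subseteq E(\overline{G})$.

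First I would spell out this adjacency bookkeeping carefully, since the matching of the cycle structure to the correct labelling of vertices is the only place where one could slip: the ``diagonal'' non-edges of the $C_4$ in $\overline{G}$ are precisely $v_1v_3$ and $v_2v_4$, which are exactly the edges of the original $2K_2$. Once this is verified, the hypothesis that $\overline{G}$ is a UDG lets me invoke Lemma~\ref{lem:convC4} applied to $\overline{G}$ and to the induced $C_4$ on $v_1,v_2,v_3,v_4$. For any representation $f$ of $\overline{G}$, that lemma immediately yields that $\textup{Conv}(p_1,p_2,p_3,p_4)$ is a quadrangle and that $[p_1,p_3]$ and $[p_2,p_4]$ cross, where $p_i=f(v_i)$. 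Since these are exactly the two conclusions claimed in Lemma~\ref{lem:2K2cross}, the proof is complete.

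There is essentially no geometric or computational obstacle here; the entire content is the translation between the $2K_2$ description (natural when one wants to reason about $\overline{G}$ being a UDG) and the $C_4$ description (the form in which the convexity tool was proved). The only subtlety I would be careful about is ensuring that the index correspondence is consistent throughout, so that the segments $[p_1,p_3]$ and $[p_2,p_4]$ named in the statement of Lemma~\ref{lem:2K2cross} are indeed the diagonals of the quadrangle (the pairs that cross) rather than its sides. Under the correspondence above, the edges of the $C_4$ in $\overline{G}$ are the sides and the non-edges $v_1v_3, v_2v_4$ are the diagonals, which matches the crossing pair identified by Lemma~\ref{lem:convC4}. Thus the lemma follows as a clean corollary of the convexity lemma, with complementation as the only mechanism involved.
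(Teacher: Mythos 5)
Your proposal is correct and is exactly the argument the paper intends: the paper states Lemma~\ref{lem:2K2cross} without proof, introducing it as merely a ``more convenient form'' of Lemma~\ref{lem:convC4} for complements, which is precisely the complementation translation you spell out. Your careful bookkeeping that the $2K_2$ edges $v_1v_3, v_2v_4$ become the diagonals (non-edges) of the induced $C_4$ in $\overline{G}$ is the whole content, and it matches the paper's implicit reasoning.
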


\begin{lemma}\label{lem:coP6}
	Let $G=(V,E)$ be a graph and let $\{ v_1, v_2, v_3, v_4, v_5, v_6 \} \subseteq V$
	induces a $P_6$ in $G$ with edges $v_iv_{i+1} \in E$ for $i = 1, \ldots, 5$.
	If $\overline{G}$ is a UDG then for any representation $f$ of $\overline{G}$
	convex hull $\textup{Conv}(p_2, p_3, p_4, p_5)$ is a quadrangle, and 
	$[p_2, p_3]$ and $[p_4, p_5]$ cross, where $p_i = f(v_i)$, $i=1, \ldots, 6$.
\end{lemma}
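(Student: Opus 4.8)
The plan is to work throughout in a fixed representation $f$ of $\overline{G}$, writing $p_i=f(v_i)$ and $\dist(p_i,p_j)$ for the Euclidean distance between images. Since $v_1v_2,\dots,v_5v_6$ are the edges of the $P_6$ and hence the only non-edges of $\overline{G}$ among $v_1,\dots,v_6$, the representation satisfies $\dist(p_i,p_{i+1})>1$ for $i=1,\dots,5$, while every other pair among the six points is at distance at most $1$. In these terms the conclusion is exactly that $p_2,p_3,p_4,p_5$ are in convex position and that the two ``long'' segments $[p_2,p_3]$ and $[p_4,p_5]$ are the diagonals of the resulting quadrangle. Note that $\{v_2,v_3,v_4,v_5\}$ induces a $P_4$ (not a $2K_2$) in $G$, so Lemma~\ref{lem:2K2cross} does \emph{not} apply to these four vertices directly; the extra vertices $v_1$ and $v_6$ will have to be used as anchors.

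First I would show that $\textup{Conv}(p_2,p_3,p_4,p_5)$ is a quadrangle, i.e.\ that none of the four points lies in the (possibly degenerate) triangle spanned by the other three. For $p_3$ this is immediate from Claim~\ref{cl:triangle} with apex $p_2$: since $\dist(p_2,p_4)\le1$ and $\dist(p_2,p_5)\le1$, any point of $\triangle p_2p_4p_5$ lies within distance $1$ of $p_2$, whereas $\dist(p_2,p_3)>1$. The point $p_4$ is excluded symmetrically, using apex $p_5$ and $\dist(p_5,p_2),\dist(p_5,p_3)\le1$. The two remaining points $p_2$ and $p_5$ cannot be handled internally (no vertex of the opposite triangle has both incident sides short), and this is where the anchors enter: because $\dist(p_1,p_3),\dist(p_1,p_4),\dist(p_1,p_5)\le1$, the points $p_3,p_4,p_5$ lie in the closed unit disk centred at $p_1$, which is convex, so the whole triangle $\triangle p_3p_4p_5$ lies in that disk; were $p_2$ inside it, we would get $\dist(p_1,p_2)\le1$, contradicting $\dist(p_1,p_2)>1$. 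The point $p_5$ is excluded in the same way using $p_6$.

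It then remains to identify the cyclic order of the quadrangle; there are three possibilities, according to which vertex is opposite $p_2$, and I must rule out the two in which $[p_2,p_3]$ and $[p_4,p_5]$ are opposite sides rather than diagonals. Suppose first that $p_2$ and $p_4$ are opposite, so the diagonals are the two short segments $[p_2,p_4]$ and $[p_3,p_5]$, meeting at an interior point $O$. Writing $w=\dist(O,p_2)$, $x=\dist(O,p_3)$, $y=\dist(O,p_4)$, $z=\dist(O,p_5)$, the short diagonals give $w+y=\dist(p_2,p_4)\le1$ and $x+z=\dist(p_3,p_5)\le1$, while the three long sides give, via the triangle inequality, $w+x>1$, $x+y>1$ and $y+z>1$. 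Subtracting $w+y\le1$ from $w+x>1$ yields $x>y$, and subtracting $x+z\le1$ from $y+z>1$ yields $y>x$, a contradiction. This disposes of the first bad order cleanly.

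The remaining order, in which $p_2$ and $p_5$ are opposite, is the hard part, and I expect it to be the main obstacle. Here the diagonals are $[p_2,p_5]$ (short) and $[p_3,p_4]$ (long), so the triangle-inequality trick above fails, and one genuinely needs \emph{both} anchors together with the constraint $\dist(p_1,p_6)\le1$ (which holds because $v_1v_6$ is a non-edge of $G$). The idea is that $p_1$ and $p_6$ both lie in the lens $D(p_3,1)\cap D(p_4,1)$, which is thin because $\dist(p_3,p_4)>1$ --- any two points of the lens lying on one side of the line $L(p_3,p_4)$ are at distance strictly below $1$ --- while the conditions $\dist(p_1,p_2)>1$ and $\dist(p_6,p_5)>1$, combined with the placement of the short diagonal $[p_2,p_5]$ along the long axis of the lens, force $p_1$ and $p_6$ towards opposite tips; this contradicts $\dist(p_1,p_6)\le1$. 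Turning this picture into inequalities, for instance via the law of cosines~\eqref{eq5} applied at the intersection of the diagonals as in the previous paragraph but now tracking the lens geometry, is the delicate computational step. Once the second bad order is eliminated, the surviving order has $[p_2,p_3]$ and $[p_4,p_5]$ as crossing diagonals, which is precisely the assertion of the lemma.
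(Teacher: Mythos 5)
Your first two steps are correct: the exclusion of each $p_i$ from the convex hull of the other three (Claim~\ref{cl:triangle} for $p_3,p_4$; convexity of the unit disks centred at the anchors $p_1,p_6$ for $p_2,p_5$) is sound and even a little cleaner than the paper's route, and your elimination of the cyclic order with $p_2$ opposite $p_4$ by subtracting segment lengths at the crossing point is valid (it is essentially the paper's final step). The genuine gap is the last case, $p_2$ opposite $p_5$, which you explicitly leave as a sketch. This case is where the whole content of the lemma sits, because it cannot be killed by any bookkeeping among $p_2,\dots,p_5$ alone: the configuration is realizable by the inner $P_4$. For instance, take $p_2=(-\tfrac12,0)$, $p_5=(\tfrac12,0)$, $p_3=\tfrac45(\cos\theta,\sin\theta)$ with $\cos\theta=\tfrac15$, and $p_4=-p_3$; then $\dist(p_2,p_3)^2=\dist(p_4,p_5)^2=1.05$, $\dist(p_3,p_4)=1.6$, $\dist(p_2,p_4)^2=\dist(p_3,p_5)^2=0.73$, $\dist(p_2,p_5)=1$, and the diagonals are exactly $[p_2,p_5]$ and $[p_3,p_4]$. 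So the anchors must do real quantitative work here, and your sketch never produces the needed inequality: the key claim — that the two ``caps'' of the lens $D(p_3,1)\cap D(p_4,1)$ consisting of points at distance $>1$ from $p_2$, respectively from $p_5$, are at distance $>1$ from each other — is nontrivial and unproven. Your half-lens diameter bound points the wrong way: it only shows same-side points of the lens are close (which is consistent with $\dist(p_1,p_6)\le1$), whereas the caps near the two tips straddle $L(p_3,p_4)$, precisely the regime your bound says nothing about. As written, the case is open.

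The paper closes this case with no metric estimates at all, and you can substitute its argument directly, since your bad order says exactly that $p_3$ and $p_4$ lie on opposite sides of $L=L(p_2,p_5)$. The sets $\{v_1,v_2,v_4,v_5\}$, $\{v_2,v_3,v_5,v_6\}$ and $\{v_1,v_2,v_5,v_6\}$ each induce a $2K_2$ in $G$, so by Lemma~\ref{lem:2K2cross} the segment $[p_1,p_2]$ crosses $[p_4,p_5]$, $[p_5,p_6]$ crosses $[p_2,p_3]$, and $[p_1,p_2]$ crosses $[p_5,p_6]$. Since $p_2\in L$ and $p_3$ lies on one side of $L$, the second crossing forces $p_6$ onto the $p_3$-side; likewise the first forces $p_1$ onto the $p_4$-side. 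Then $[p_1,p_2]$ and $[p_5,p_6]$ lie in opposite closed half-planes determined by $L$, meeting $L$ only at the distinct points $p_2$ and $p_5$, so they cannot cross — contradicting the third application of Lemma~\ref{lem:2K2cross}. (In the paper this sidedness argument comes first, proving $p_3,p_4$ lie on the same side of $L$, after which both the quadrangle claim and the identification of the diagonals follow; in your structure it slots in as the missing case.) With that replacement your proof is complete.
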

\begin{proof}
	First, let us note that neither $p_3$ nor $p_4$ lies on line $L = L(p_2,p_5)$. 
	Indeed, suppose $p_4$ lies on $L$, then $\textup{Conv}(p_1, p_2, p_4, p_5)$ is not 
	a quadrangle. However, it should be a quadrangle by Lemma \ref{lem:2K2cross}, 
	as $\{v_1, v_2, v_4, v_5\}$ induces a $2K_2$ in $G$. This contradiction proves that $p_4$
	does not belong to the line $L$. By symmetry the same conclusion holds for $p_3$.

%	First, let us show that no three points in $M = \{ v^f_2, v^f_3, v^f_4, v^f_5 \}$ are
%	collinear....
	
	Further, we claim that $p_3$ and $p_4$ are on the same side of 
	$L$. Suppose to the contrary, $L$ separates $p_3$ and $p_4$.
	By Lemma \ref{lem:2K2cross}, $[p_5, p_6]$ crosses $[p_2, p_3]$,
	hence we deduce that $p_6$ must lie on the same side of $L$ as $p_3$ (see Figure \ref{fig:coP6_fig1}).
	Also, by Lemma \ref{lem:2K2cross}, $[p_1,p_2]$ crosses $[p_4,p_5]$,
	hence, $p_1$ must be on the same side of $L$ as $p_4$. 
	From this we deduce that $p_1$ and $p_6$ are separated by $L$
 	and hence $[p_1,p_2]$ and $[p_5,p_6]$ lie in different half-planes and do not cross. 
	The latter is impossible,
	since $[p_1,p_2]$ and $[p_5, p_6]$ cross by Lemma \ref{lem:2K2cross}.

	Let $S = \{ p_2, p_3, p_4, p_5 \}$ and suppose that $\textup{Conv}(S)$ 
	is a triangle.
	Since $p_3$ and $p_4$ are on the same side of $L$, either $p_3$ or 
	$p_4$ is not an extreme point of $\textup{Conv}(S)$. Without loss of generality, 
	assume $p_3$ is not an extreme point of $\textup{Conv}(S)$ (see Figure \ref{fig:coP6_fig2}). 
	Since $\dist(p_2, p_5) \leq 1$ 
	and $\dist(p_2, p_4) \leq 1$, by Claim \ref{cl:triangle} 
	we obtain $\dist(p_2, p_3) \leq 1$. This is a contradiction as $v_2v_3$ is a non-edge in $\overline{G}$.
	This shows that $\textup{Conv}(S)$ is a quadrangle.
	
	\begin{figure}[H]

       	\begin{subfigure}[b]{0.5\textwidth}
       	       	\centering
             		\includegraphics[scale=0.65]{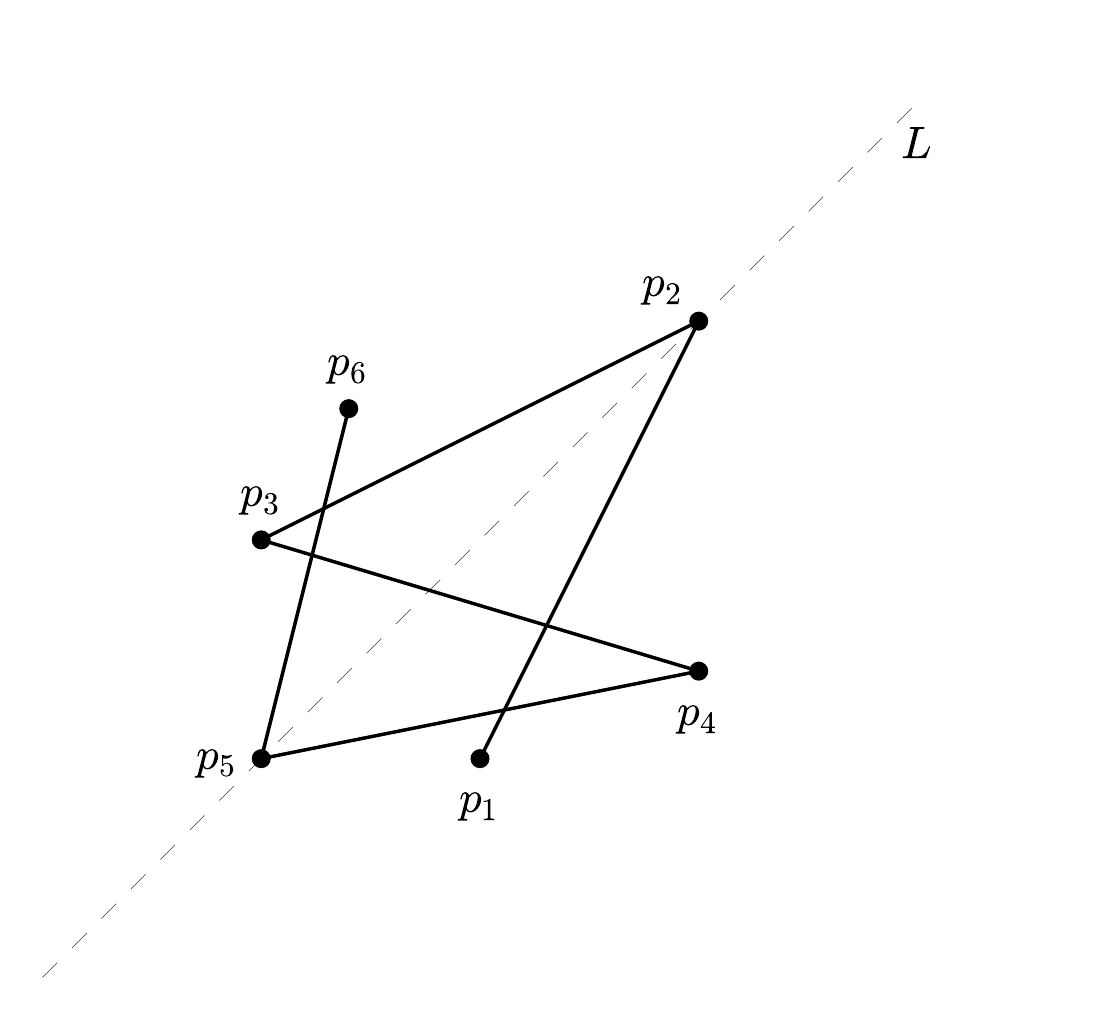}
                	\caption{}
                	\label{fig:coP6_fig1}
        	\end{subfigure}%
        ~ 
        	\begin{subfigure}[b]{0.5\textwidth}
        	       	\centering
              	\includegraphics[scale=0.65]{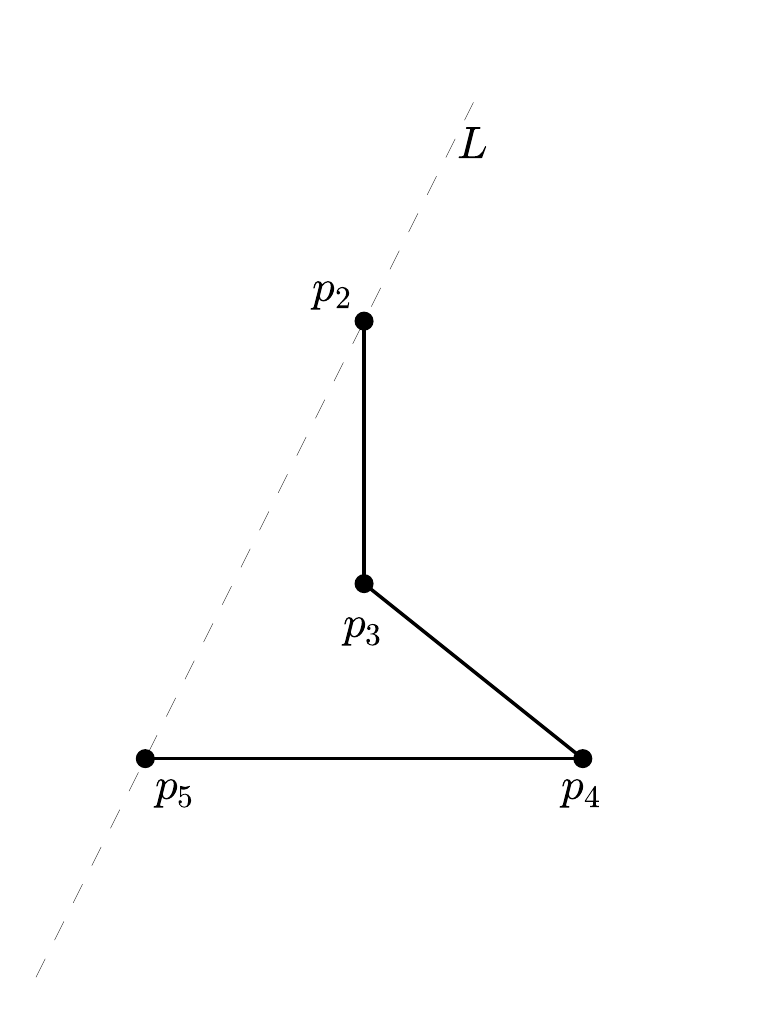}
                	\caption{}
                	\label{fig:coP6_fig2}
        	\end{subfigure}

        	\caption{}\label{fig:coP6}
	\end{figure}
	
	Finally, suppose that $\textup{Conv}(S)$ is a quadrangle, but
	$[p_2, p_3]$ and $[p_4,p_5]$ do not cross. 
	Since $p_3$ and $p_4$ are on 
	the same side of $L$, $[p_2,p_4]$ crosses $[p_3,p_5]$. Let $q$ be the 
	crossing point of these intervals.
	%(see Figure \ref{fig:coP6_fig3}). 
	Without loss of generality, assume $\dist(p_3, q) \geq \dist(p_4, q)$. 
	Then
	$$
		1 < \dist(p_4, p_5) \leq \dist(p_4, q) + \dist(q, p_5) \leq 
		\dist(p_3, q) + \dist(q, p_5) = \dist(p_3, p_5) \leq 1,
	$$
	a contradiction. This finishes the proof of the lemma.
\end{proof}

%%%%%%%%%%%%%%%%%%%%%%%%%%%%%%%%%%%%%%%%%%%
%% Edge-asteroid triples
%%%%%%%%%%%%%%%%%%%%%%%%%%%%%%%%%%%%%%%%%%%

%%%%%%%%%%%%%%%%%%%%%%%%%%%%%%%%%%%%
%%%%%%%%%%%%%%%%%%%%%%%%%%%%%%%%%%%%
\subsection{Edge-asteroid triples}
%%%%%%%%%%%%%%%%%%%%%%%%%%%%%%%%%%%%
%%%%%%%%%%%%%%%%%%%%%%%%%%%%%%%%%%%%

A set of three edges in a graph is called an {\it edge-asteroid triple} if for each pair of 
the edges, there is a path containing both of the edges that avoids the 
neighbourhoods of the end-vertices of the third edge.

\begin{lemma}\label{lem:co-eat}
	Let $G=(U, W, E)_c$ be a co-bipartite UDG.
	Then $\overline{G}$ contains no edge-asteroid triples.
\end{lemma}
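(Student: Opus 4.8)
The plan is to fix a UDG-representation $f$ of $G$ and argue geometrically in the plane, exploiting that $\overline{G}$ is bipartite with parts $U$ and $W$ (the two cliques of $G$), so every edge of $\overline{G}$ joins $U$ to $W$ and corresponds to a pair of points at distance $>1$. I use throughout the dictionary: a vertex $v$ lies outside $N(z)$ in $\overline{G}$ if and only if $\dist_f(v,z)\le 1$. Assume for contradiction that $\overline{G}$ has an edge-asteroid triple $\{e_1,e_2,e_3\}$; write $e_i=x_iy_i$ with $x_i\in U$, $y_i\in W$, and for each pair $i\ne j$ let $P_{ij}$ be a path containing $e_i$ and $e_j$ that avoids $N(x_k)\cup N(y_k)$, where $k$ is the third index.

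The key step is to show that \emph{every edge of $P_{ij}$ crosses the segment $e_k$}. Indeed, if $ab$ is any edge of $P_{ij}$, then $a,b\notin N(x_k)\cup N(y_k)$, so in $\overline{G}$ the four pairs $ax_k,ay_k,bx_k,by_k$ are non-edges; as $a,b\ne x_k,y_k$, the set $\{a,b,x_k,y_k\}$ induces a $2K_2$ with edges $ab$ and $x_ky_k=e_k$. Applying Lemma~\ref{lem:2K2cross} to this $2K_2$ (its hypothesis holds because the complement of $\overline{G}$ is $G$, a UDG, and $f$ is a representation of it) shows that $[f(a),f(b)]$ and $e_k$ cross. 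In particular $e_i$ and $e_j$, being edges of $P_{ij}$, each cross $e_k$, so the three segments $e_1,e_2,e_3$ cross pairwise; hence the three lines $\ell_i=L(f(x_i),f(y_i))$ are pairwise intersecting and, generically, bound a triangle, with $\ell_i$ separating the two endpoints of each of the other two edges.

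I then convert this into a statement about sides of lines. Since every edge of $P_{ij}$ crosses $e_k$, it crosses $\ell_k$ transversally, so consecutive vertices of $P_{ij}$ lie on opposite sides of $\ell_k$. Because $\overline{G}$ is bipartite, $P_{ij}$ alternates between $U$ and $W$; therefore all $U$-vertices of $P_{ij}$ lie on one side of $\ell_k$ and all $W$-vertices on the other. Applied to $x_i$ and $x_j$ (both in $U$ and both on $P_{ij}$), this gives three conditions: $x_1,x_2$ on the same side of $\ell_3$; $x_1,x_3$ on the same side of $\ell_2$; and $x_2,x_3$ on the same side of $\ell_1$.

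Finally, I claim these three ``same-side'' conditions are incompatible with $e_1,e_2,e_3$ crossing pairwise. Placing the triangle cut out by $\ell_1,\ell_2,\ell_3$ in coordinates and using that each $e_i$ extends beyond both triangle vertices on $\ell_i$ (so each endpoint is fixed up to the binary choice of which end is $x_i$), a direct check shows that for either choice of the side of $x_1$, the conditions on $\ell_3$ and $\ell_2$ force $x_2$ and $x_3$ onto opposite sides of $\ell_1$, contradicting the third condition. This contradiction proves the lemma. I expect the main obstacle to be precisely this last, global step: the local facts (pairwise crossing and the side-alternation of each path) follow cleanly from Lemma~\ref{lem:2K2cross}, but turning them into an impossibility requires pinning down the planar configuration, and the degenerate case where $\ell_1,\ell_2,\ell_3$ are concurrent must be handled separately (by a perturbation argument or an analogous sector analysis). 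Some care is also needed to confirm that each $2K_2$ invoked is genuinely induced and that the paths avoid $x_k,y_k$ themselves.
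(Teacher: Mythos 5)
Your proposal is correct and takes essentially the same approach as the paper: you apply Lemma~\ref{lem:2K2cross} to every edge of the avoiding paths to obtain crossings with the third segment, use bipartiteness of $\overline{G}$ to place all $U$-vertices of each path on one side of the corresponding line $L_i$ and all $W$-vertices on the other, and then derive a contradiction between these three separation conditions and the pairwise crossing of the segments $e_1,e_2,e_3$. The paper's final step is the same planar-configuration argument you sketch (it likewise appeals to a figure rather than an explicit coordinate check), so your proposal matches the paper's proof in both structure and level of detail.
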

\begin{proof}
	Let $f$ be a representation of the unit disk graph $G$, and for $v \in V(G)$ let $p_v = f(v)$.
	Suppose to the contrary that $\overline{G}$ contains an edge-asteroid triple
	$\{ e_1, e_2, e_3 \} \subset E$. Denote by 
	$u_i$ and $w_i$ the end-vertices of $e_i$, where $u_i \in U$, $w_i \in W$,
	$i \in \{ 1, 2, 3 \}$. 
	For distinct $i,j,k \in \{1,2,3\}$, let $P_i$ be a path in $\overline{G}$ that avoids 
	the neighbourhood of $u_i$ and the neighbourhood $w_i$, and whose terminal 
	edges are $e_j$ and $e_k$.
	By Lemma \ref{lem:2K2cross} the interval corresponding to an edge of $P_i$ 
	crosses $[p_{u_i},p_{w_i}]$. 
	Since $\overline{G}$ is bipartite, this implies that the images of the vertices in $V(P_i) \cap U$ 
	lie on one side of $L_i = L(p_{u_i},p_{w_i})$ and the images of the vertices in 
	$V(P_i) \cap W$ lie on the other side of $L_i$.
	%Since $G$ is bipartite, this implies that all vertices of $P_i$ whose images 
	%lie on the same side of  $L_i = L(p_{u_i},p_{w_i})$ belong to the same part of $G$. 
	In particular, 
	%for distinct $i,j,k \in \{1,2,3\}$, 
	$p_{u_j}$ and $p_{u_k}$ lie on one side of $L_i$ and $p_{w_j}$ and $p_{w_k}$ lie 
	on the other side.
	
	On the other hand, since, by Lemma \ref{lem:2K2cross}, the intervals 
	corresponding to $e_1,e_2,e_3$ pairwise cross, there exists
	$i \in \{ 1,2,3 \}$ such that $p_{u_j}$ and $p_{w_k}$ are on the same side of $L_i$. Indeed,
	if, say, $p_{u_1}$ and $p_{u_2}$ lie on the same side of $L_3$ and $p_{w_1}$ and 
	$p_{w_2}$ lie on the other side, then necessarily either $L_1$ has 
	$p_{u_2}$ and $p_{w_3}$ on one of its sides or $L_2$ has $p_{u_1}$ and $p_{w_3}$
	on one of its sides (see Figure \ref{fig:coEAT_fig1}). This contradiction establishes the lemma. 
	
	\begin{figure}[H]
    		\begin{subfigure}[b]{0.47\textwidth}
    			\centering
   			\includegraphics[scale=0.65]{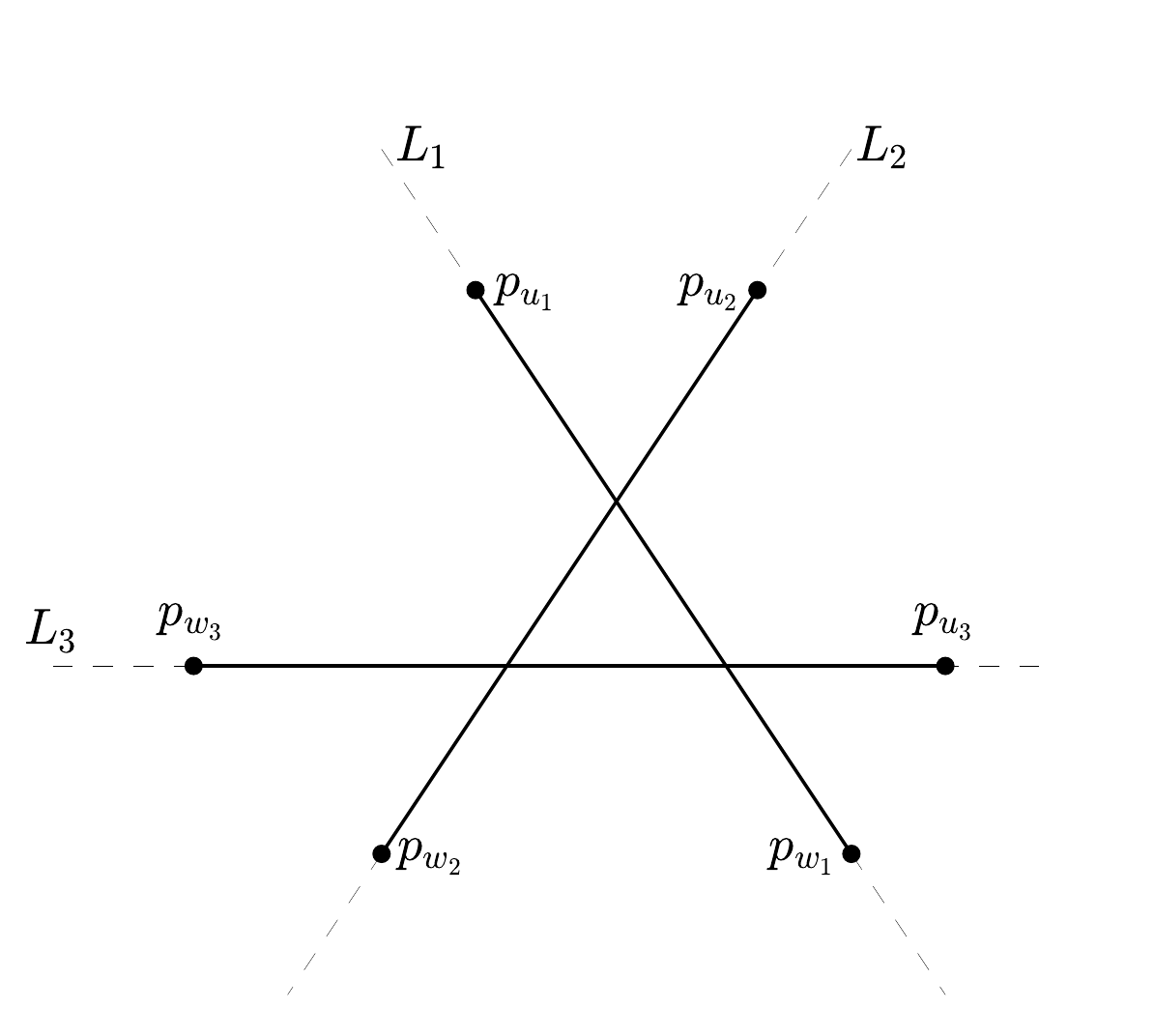}
    			\caption{}
    			\label{fig:coEAT_fig1}
        	\end{subfigure}%
        	~
        	\begin{subfigure}[b]{0.53\textwidth}
        		\centering
   			\includegraphics[scale=0.65]{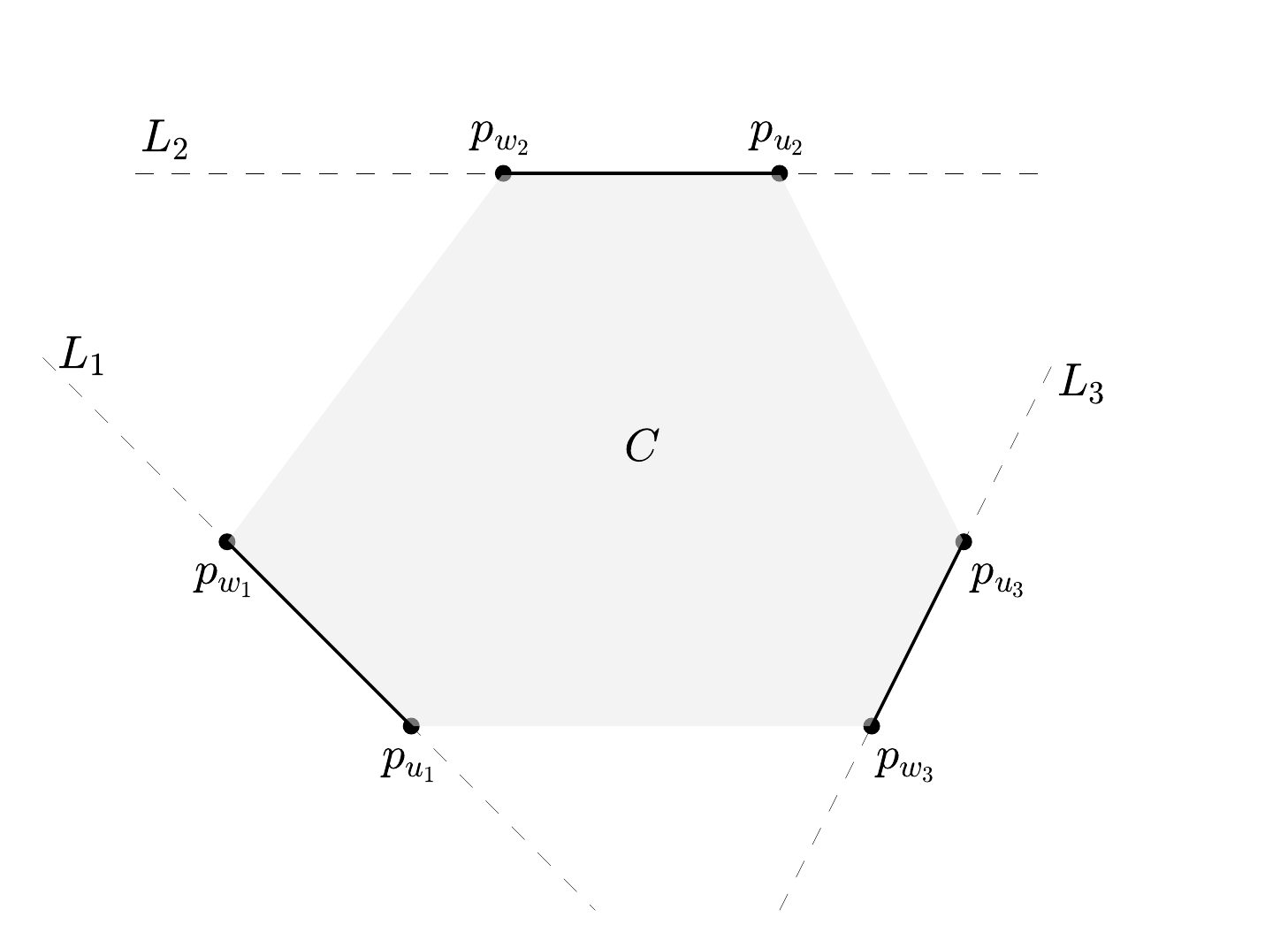}
    			\caption{}
    			\label{fig:starEAT_fig1}
		\end{subfigure}%
		
		\caption{}
	\end{figure}
\end{proof} 

\begin{lemma}\label{lem:star-eat}
	Let $G=(U,W,E)$ be a bipartite graph. If co-bipartite graph $G^*=(U,W,E)_c$ is UDG, 
	then $G$ contains no edge-asteroid triples.
%	Let $G=(U,W,E)$ be a co-bipartite UDG. Then $G^*$ contains no 
%	edge-asteroid triples.
\end{lemma}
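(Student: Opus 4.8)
The plan is to mirror the proof of Lemma~\ref{lem:co-eat}, but to exploit that the three edges of the asteroid triple are now \emph{edges} (not non-edges) of the unit disk graph $G^*$, so that the governing tool becomes the convexity of $C_4$ (Lemma~\ref{lem:convC4} and Corollary~\ref{cor:C4same_side}) rather than the crossing of $2K_2$. Suppose for contradiction that $G$ has an edge-asteroid triple $\{e_1,e_2,e_3\}$ with $e_i=u_iw_i$, $u_i\in U$, $w_i\in W$, and let $f$ be a representation of $G^*$, writing $p_v=f(v)$ and $L_i=L(p_{u_i},p_{w_i})$. First I would establish the basic local structure: if $xy$ (with $x\in U$, $y\in W$) is any edge of a path $P_i$ that avoids $N(u_i)\cup N(w_i)$ and has terminal edges $e_j,e_k$, then $\{u_i,w_i,y,x\}$ induces a $C_4$ in $G^*$. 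Indeed $u_ix$ and $w_iy$ are edges because $U$ and $W$ are cliques in $G^*$, $u_iw_i$ and $xy$ are edges of $G$, while $u_iy$ and $w_ix$ are non-edges precisely because $P_i$ avoids the neighbourhoods of $u_i$ and $w_i$. Applying Corollary~\ref{cor:C4same_side} to this $C_4$ then places $p_x$ and $p_y$ on the \emph{same} side of $L_i$.

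Next I would pass from this local statement to a global one. Since consecutive edges of $P_i$ share an endpoint, the ``same side'' property propagates along the path, so the entire image of $P_i$ lies strictly on one open side of $L_i$; in particular the four endpoints $p_{u_j},p_{w_j},p_{u_k},p_{w_k}$ of the terminal edges all lie on that side (strictness, and hence that the six points are pairwise distinct, follows from the non-degeneracy guaranteed by Lemma~\ref{lem:convC4}, since no three vertices of a $C_4$ are collinear). Thus $L_i$ is a supporting line of $\textup{Conv}(\{p_{u_t},p_{w_t}:t=1,2,3\})$ meeting it exactly in $p_{u_i}$ and $p_{w_i}$, so $s_i:=[p_{u_i},p_{w_i}]$ is an edge of this convex hull. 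As this holds for every $i$, all six points are in convex position, and $s_1,s_2,s_3$ are three pairwise disjoint edges of the resulting hexagon covering all six vertices, hence its three alternating edges.

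Finally I would extract the crossing information and finish by a counting argument. Applying Lemma~\ref{lem:convC4} to the $C_4$ formed by $e_i$ and the terminal edge $e_j$ of $P_i$ shows that, for every pair $i\neq j$, the diagonals $[p_{u_i},p_{w_j}]$ and $[p_{w_i},p_{u_j}]$ cross. Reading the hexagon in cyclic order as $a_1,b_1,a_2,b_2,a_3,b_3$ with $\{a_i,b_i\}=\{p_{u_i},p_{w_i}\}$, the two crossing chords joining $s_i$ to $s_j$ are exactly the diagonals $[a_i,a_j]$ and $[b_i,b_j]$; matching these with $\{[p_{u_i},p_{w_j}],[p_{w_i},p_{u_j}]\}$ forces $a_i$ and $a_j$ to carry different labels from $\{U,W\}$ for every pair. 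Setting $\lambda_i=U$ if $a_i=p_{u_i}$ and $\lambda_i=W$ otherwise, this asserts that $\lambda_1,\lambda_2,\lambda_3$ are pairwise distinct elements of a two-element set, which is impossible. I expect the main obstacle to be the careful bookkeeping in this last step---correctly translating ``the diagonals cross'' in a convex hexagon into the parity statement---together with ensuring all degeneracies (coincident or collinear images) are excluded by the non-degeneracy built into Lemma~\ref{lem:convC4}; the remainder is a faithful adaptation of the scheme of Lemma~\ref{lem:co-eat}.
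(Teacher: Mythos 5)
Your proposal is correct and follows essentially the same route as the paper's proof: Corollary~\ref{cor:C4same_side}, applied to the induced $C_4$'s formed by $u_iw_i$ together with each edge of $P_i$, places the whole image of $P_i$ strictly on one side of $L_i$, forcing the six endpoints into convex position with the segments $[p_{u_i},p_{w_i}]$ as alternating edges of the hexagon, after which Lemma~\ref{lem:convC4} applied to the induced $C_4$'s on $\{u_i,w_i,u_j,w_j\}$ yields the contradiction. The only cosmetic difference is the endgame bookkeeping: the paper rules out hull-adjacency of $p_{u_i}$ and $p_{w_j}$ for $i\neq j$ and then counts adjacencies around the hexagon, whereas you translate the forced crossings into the parity condition that the labels $\lambda_1,\lambda_2,\lambda_3\in\{U,W\}$ be pairwise distinct --- the same argument packaged slightly differently.
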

\begin{proof}
	Let $f$ be a representation of unit disk graph $G^*$, and for $v \in V(G^*)$ let $p_v = f(v)$.
	Suppose to the contrary that $G$ contains an edge-asteroid triple
	$\{ e_1, e_2, e_3 \} \in E$. Denote by 
	$u_i$ and $w_i$ the end-vertices of $e_i$, where $u_i \in U$, $w_i \in W$,
	$i \in \{ 1, 2, 3 \}$. For distinct $i,j,k \in \{1,2,3\}$, let $P_i$ be a path in $G$ 
	that avoids the neighbourhoods of $u_i$ and $w_i$, and whose terminal edges 
	are $e_j$ and $e_k$.
	Corollary \ref{cor:C4same_side} implies that for 
	%every $i \in \{1,2,3\}$ and 
	every edge 
	$vu$ of $P_i$ both $p_v$ and $p_u$ lie on the same side of 
	$L_i = L(p_{u_i},p_{w_i})$. 
	Therefore all the images of the vertices of $P_i$ lie on the same side of $L_i$.
	In particular, 
	%for distinct $i,j,k \in \{1,2,3\}$, 
	$p_{u_j},p_{w_j},p_{u_k}$ and $p_{w_k}$ 
	lie on the same side of $L_i$. The latter fact means that 
	$p_{u_1},p_{w_1},p_{u_2},p_{w_2},p_{u_3},p_{w_3}$ are extreme
	points of $C = \textup{Conv}(p_{u_1},p_{w_1},p_{u_2},p_{w_2},p_{u_3},p_{w_3})$ 
	and for every $i \in \{1,2,3\}$ $p_{u_i}$ and $p_{w_i}$ are adjacent extreme points of the 
	convex hull (see Figure \ref{fig:starEAT_fig1}).
	
	%Any two adjacent extreme points in $C$ that belong to different edges of the
	%edge-asteroid triple must belong to the same part of $G$. 
	%Indeed if say $p_{w_1}$
	%and $p_{u_2}$ are two adjacent extreme points of $C$, then 
	%$[p_{w_1},p_{u_2}]$ and $[p_{u_1},p_{w_2}]$ do not cross, which contradicts 
	%Lemma \ref{lem:convC4}.
	
	Now we'll show that $p_{u_i}$ and $p_{w_j}$ for $j \neq i$ cannot be adjacent extreme points 
	of the convex hull. Indeed, assume for contradiction, $p_{u_i}$ is adjacent to $p_{w_j}$ for $j \neq i$.
	Then, as we proved above, $p_{w_i}, p_{u_i}, p_{w_j}, p_{u_j}$ must be a sequence of consecutive
	extreme points in the convex hull. However, $\{w_i, u_i, w_j, u_j\}$ forms a $C_4$ in $G^*$ and 
 	by Lemma~\ref{lem:convC4}, $[p_{w_i}, p_{u_j}]$ must be crossing $[p_{w_j}, p_{u_i}]$, a contradiction. 
	Hence, we deduce, that $p_{u_i}$ is adjacent to $p_{w_j}$ if and only if $i=j$.

	Now assume, without loss of generality, that $p_{w_1}$ is adjacent to $p_{w_2}$ in 
	$C$. This gives us a sequence of extremal points in the convex hull $p_{u_1}, p_{w_1}, p_{w_2}, p_{u_2}$.  
	But then $p_{w_3}$ is adjacent to either $p_{u_1}$ or to $p_{u_2}$ in $C$ 
	(see Figure \ref{fig:starEAT_fig1}), a contradiction.
	
%	\begin{figure}[H]
%    		\centering
%   		\includegraphics[scale=0.65]{Lem_starEAT_fig1}
%    		\caption{}
%    		\label{fig:starEAT_fig1}
%	\end{figure}
\end{proof}

%%%%%%%%%%%%%%%%%%%%%%%%%%%%%%%%%%%%%%%%%%%
%%%%%%%%%%%%%%%%%%%%%%%%%%%%%%%%%%%%%%%%%%%
%% Minimal forbidden induced subgraphs
%%%%%%%%%%%%%%%%%%%%%%%%%%%%%%%%%%%%%%%%%%%
%%%%%%%%%%%%%%%%%%%%%%%%%%%%%%%%%%%%%%%%%%%

%%%%%%%%%%%%%%%%%%%%%%%%%%%%%%%%%%%%
%%%%%%%%%%%%%%%%%%%%%%%%%%%%%%%%%%%%
\section{Minimal forbidden induced subgraphs}\label{sec:forb}
%%%%%%%%%%%%%%%%%%%%%%%%%%%%%%%%%%%%
%%%%%%%%%%%%%%%%%%%%%%%%%%%%%%%%%%%%

\begin{theorem}\label{th:K2C2k+1}
	For every integer $k \geq 1$, $\overline{K_2 + C_{2k+1}}$ is a minimal non-UDG.
\end{theorem}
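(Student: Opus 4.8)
The plan is to prove the two halves separately: first that $H := \overline{K_2 + C_{2k+1}}$ is not a UDG, and then that deleting any single vertex yields a UDG, which gives minimality since the class is hereditary. Throughout, write the $K_2$ as $\{a,b\}$ and the cycle as $v_1v_2\cdots v_{2k+1}v_1$, so that in $H$ the vertices $a,b$ are non-adjacent, each of $a,b$ is adjacent to every $v_i$, and $v_iv_j\in E(H)$ exactly when $v_i,v_j$ are non-consecutive on the cycle.

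For the non-UDG part, the key observation is that for every cycle edge $v_iv_{i+1}$ the four vertices $\{a,b,v_i,v_{i+1}\}$ induce a $2K_2$ in $G:=K_2+C_{2k+1}$, with edges $ab$ and $v_iv_{i+1}$ and no cross edges since the union is disjoint. Assuming $H$ has a representation $f$ and writing $p_x=f(x)$, Lemma~\ref{lem:2K2cross} applied to each such $2K_2$ forces $[p_a,p_b]$ to cross $[p_{v_i},p_{v_{i+1}}]$. In particular $p_{v_i}$ and $p_{v_{i+1}}$ lie strictly on opposite sides of the line $L(p_a,p_b)$ for every $i$ (no $p_{v_i}$ can lie on the line, else the corresponding segment could not cross $[p_a,p_b]$ at an interior point). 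Colouring each $v_i$ by the side of $L(p_a,p_b)$ on which $p_{v_i}$ falls then produces a proper $2$-colouring of $C_{2k+1}$, which is impossible for an odd cycle; hence $H$ is not a UDG. I expect this half to be short and clean, and note that for $k=1$ it recovers the known fact that $K_{2,3}=\overline{K_2+C_3}$ is not a UDG.

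For minimality it suffices, by the symmetries of $H$ (swap $a\leftrightarrow b$; rotate or reflect the cycle), to show that $H\setminus a$ and $H\setminus v_{2k+1}$ are UDGs. Deleting an apex gives $H\setminus a=\overline{K_1+C_{2k+1}}$, a single vertex $b$ adjacent to all of $\overline{C_{2k+1}}$. I would realise this by placing $p_b$ at the origin and the $v_j$ on a circle of radius $r\le 1$ about it, traversed as a star polygon of step $k$, so that $v_j$ and $v_{j+1}$ subtend the angle $\tfrac{2\pi k}{2k+1}$, nearly $\pi$. A short computation with the chord length $2r\sin(\tfrac{\pi s}{2k+1})$ shows consecutive cycle vertices realise the largest separation $s=k$, while every non-consecutive pair realises separation at most $k-1$; hence any $r$ with $\tfrac{1}{2\sin(\pi k/(2k+1))}<r\le\min\{1,\tfrac{1}{2\sin(\pi(k-1)/(2k+1))}\}$ makes exactly the consecutive pairs exceed distance $1$ while keeping every $v_j$ within distance $1$ of $b$. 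One checks this interval is non-empty for all $k\ge 1$.

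The remaining case, $H\setminus v_{2k+1}=\overline{K_2+P_{2k}}$ for the path $v_1\cdots v_{2k}$, is where I expect the real difficulty. Here the far-graph (pairs at distance $>1$) must be exactly the path, so the odd-indexed vertices form one clique, the even-indexed another, and only the $2k-1$ consecutive pairs may exceed distance $1$; moreover $a,b$ must lie at distance $>1$ from each other while both stay within distance $1$ of every $v_i$. These requirements pull in opposite directions: realising consecutive pairs at distance $>1$ forces the $v_i$ to spread out, whereas hosting two mutually distant apexes each close to all $v_i$ forces the $v_i$ into a region thin in one direction. I would therefore take $p_a=(0,h)$ and $p_b=(0,-h)$ with $2h$ slightly above $1$, so that the common neighbourhood is the thin horizontal lens formed by the intersection of the unit disks centred at $p_a$ and $p_b$, and position the $v_i$ inside this lens alternating across the $y$-axis according to parity (consistent with the now-even, hence bipartite, path), using small vertical offsets to break the one-dimensional degeneracy so that precisely the consecutive cross-pairs are pushed past distance $1$. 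Turning this picture into explicit coordinates and verifying all the pairwise distance inequalities, presumably via the estimates (\ref{eq1})--(\ref{eq5}), is the main obstacle and the step on which I would spend the most effort.
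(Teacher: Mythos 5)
Your first half (non-UDG) and your treatment of $\overline{K_1+C_{2k+1}}$ are correct and essentially identical to the paper's: the same application of Lemma~\ref{lem:2K2cross} to the $2K_2$'s $\{a,b,v_i,v_{i+1}\}$, yielding a proper $2$-colouring of the odd cycle by sides of $L(p_a,p_b)$, and the same circle-plus-centre construction (the paper places the $2k+1$ points at equal angles and adjusts the radius; your star-polygon description and chord-length computation is the same representation). Your reduction of minimality to the two cases $H\setminus a \cong \overline{K_1+C_{2k+1}}$ and $H\setminus v_{2k+1}\cong\overline{K_2+P_{2k}}$ is also exactly how the paper begins.

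The genuine gap is the case $\overline{K_2+P_{2k}}$, which you explicitly leave unfinished: you sketch a lens-based placement of the path vertices between two apexes at $(0,\pm h)$ but never produce coordinates or verify the distance inequalities, and you yourself flag this as ``the main obstacle.'' As it stands, the minimality half of your proof is incomplete. The paper avoids this entire difficulty with one observation you are missing: $K_2+P_{2k}$ is an \emph{induced subgraph} of $C_{2k+5}$ (take the path $c_1c_2\cdots c_{2k}$ and the edge $c_{2k+2}c_{2k+3}$; the skipped vertices $c_{2k+1}$, $c_{2k+4}$, $c_{2k+5}$ guarantee there are no edges between the two pieces), hence of $K_1+C_{2k+5}$. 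Since induced containment is preserved under complementation and the class of UDGs is hereditary, $\overline{K_2+P_{2k}}$ is an induced subgraph of $\overline{K_1+C_{2k+5}}$, which is a UDG by the very circle construction you already have (applied with $k+2$ in place of $k$ --- note the cycle must stay odd, which is why $2k+5$ rather than $2k+4$ is used). So the single representation you built for $\overline{K_1+C_{2m+1}}$ in fact disposes of \emph{both} vertex-deleted subgraphs, and no new geometric construction is needed. Without this reduction (or a completed version of your lens construction), your proof does not establish minimality.
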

\begin{proof}
	
	Let $G=(V,E)$ be a graph
	isomorphic to $K_2 + C_{2k+1}$, where $V=\{ u,w, c_1, \ldots, c_{2k+1} \}$ and
	$E = \{ c_ic_j : |i-j| = 1 \} \cup \{ uw, c_1c_{2k+1} \} $.
	Suppose to the contrary $\overline{G}$ is a UDG and let $f$ be a representation of $\overline{G}$,
	and let $p_v$ denotes $f(v)$ for $v \in V$. 
	By Lemma \ref{lem:2K2cross} every linear interval corresponding to an
	edge of the cycle $C_{2k+1}$ crosses $[p_u,p_w]$. That means that the vertices of the cycle are partitioned
	into two parts, according to the side of line $L(p_u,p_w)$ the image of a vertex belongs to. 
	Moreover, there are no edges between vertices in the same part.
	This leads to the contradictory conclusion that $C_{2k+1}$ is a bipartite graph.
	
	To prove the minimality of the graphs it is sufficient to show that $\overline{K_1+C_{2k+1}}$ 
	is a UDG for
	any natural $k$. Indeed, notice that by removing a vertex from $\overline{K_2+C_{2k+1}}$ we 
	get a graph which is either $\overline{K_1+C_{2k+1}}$ or $\overline{K_2+P_{2k}}$. The latter one is, in turn,
	an induced subgraph of $\overline{K_1 + C_{2k+5}}$.
	To show that $\overline{K_1+C_{2k+1}}$ is a UDG, put $2k+1$ points 
	$p_0,p_1, \ldots, p_{2k}$ equally spaced on 
	the circle of radius $r$, i.e. in polar coordinates these points can be written as 
	$(r, 0)_p, (r, \frac{2\pi}{2k+1})_p, (r, 2\frac{2\pi}{2k+1})_p, \ldots, (r, 2k\frac{2\pi}{2k+1})_p$.
	We also add one point $p_c$ at the 
	center (0,0). Choose the radius $r$ of the circle such that 
	the distance between $p_0$ and $p_k$, and between $p_0$ and 
	$p_{k+1}$ is greater than 1, and the distances between $p_0$ and 
	the other points 
	%of the circle $\frac{(2k-2)\pi}{2k+1}, \frac{(2k-4)\pi}{2k+1}, ... ,$ 
	is at most 1.
	It is easy to see that the UDG represented by these points is $\overline{K_1+C_{2k+1}}$. 
	See Figure \ref{fig:K2C2k_1_fig1} for an example of the representation of $\overline{K_1+C_{7}}$.
	
	\begin{figure}[H]
    		\centering
   		\includegraphics[scale=0.75]{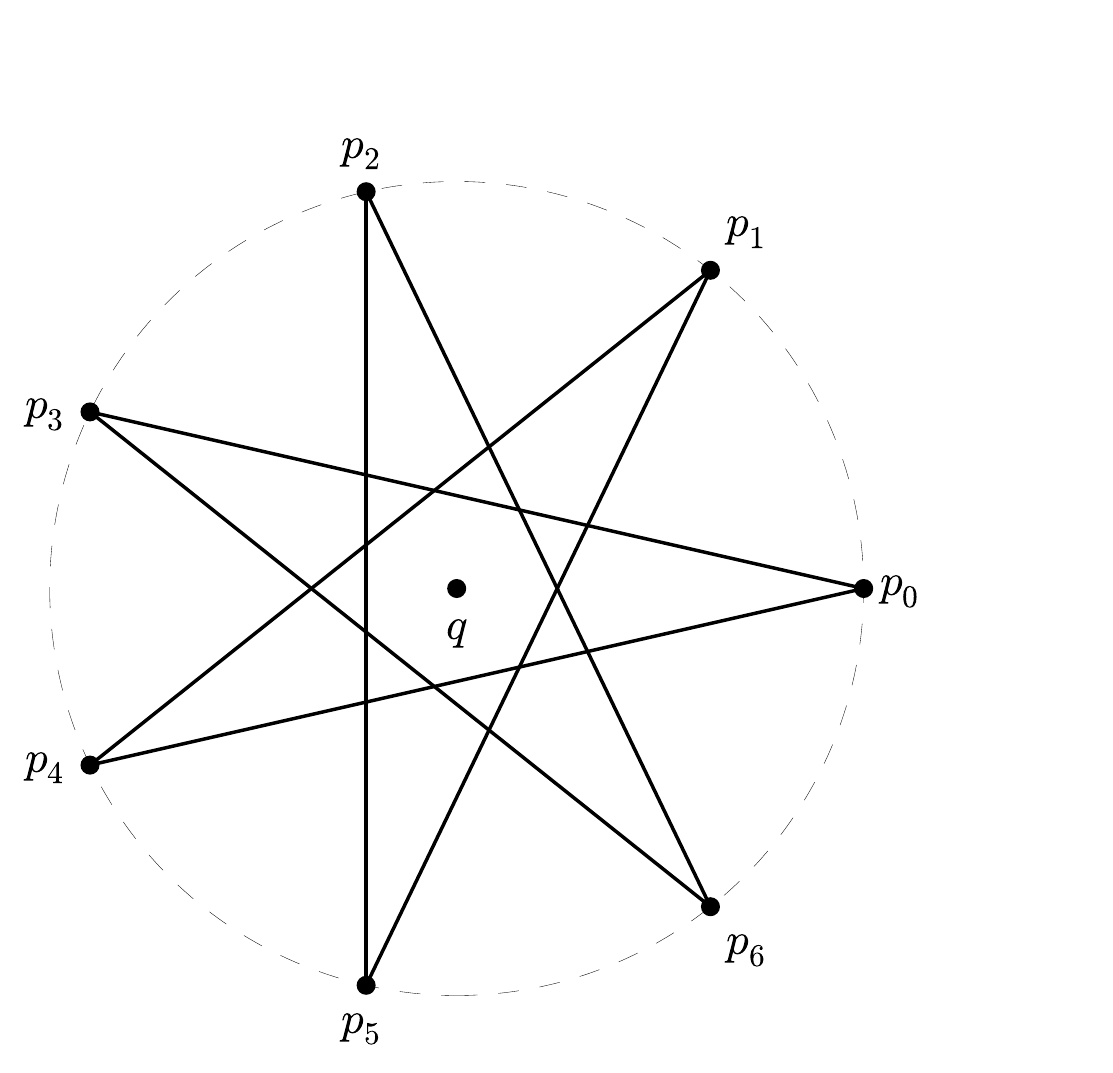}
    		\caption{The UDG-representation of $\overline{K_1+C_{7}}$}
    		\label{fig:K2C2k_1_fig1}
	\end{figure}
	
	%Hence 0, will have degree 2, 
	%other points on the circle will have degree 2, the center of the disc will be adjacent to
	%everything, and hence it follows easily that the unit disc graph represented by 
	%these points will be the complement of $K_1+C_{2k+1}$. 
\end{proof}

\begin{corollary}\label{col:Pk}
	For every integer $k \geq 1$, $\overline{P_k}$ is UDG.
\end{corollary}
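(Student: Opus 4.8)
The plan is to derive the corollary directly from the explicit UDG-representation of $\overline{K_1 + C_{2k+1}}$ constructed inside the proof of Theorem \ref{th:K2C2k+1}, using only that the class of UDGs is hereditary. The key structural observation is that complementation commutes with taking induced subgraphs: if $H = G[S]$ then $\overline{H} = \overline{G}[S]$. Hence, to prove $\overline{P_k}$ is a UDG, it suffices to exhibit $P_k$ as an induced subgraph of some graph $G$ for which $\overline{G}$ is already known to be a UDG.

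First I would fix $k \geq 1$ and choose an odd integer $n = 2m+1 \geq k+1$, which always exists. Since deleting one vertex from the cycle $C_n$ yields the path $P_{n-1}$, and $n-1 \geq k$, the path $P_k$ is an induced subgraph of $C_n$. Applying the commutation observation above, $\overline{P_k}$ is then an induced subgraph of $\overline{C_n}$.

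Next I would link $\overline{C_n}$ to the graph treated in Theorem \ref{th:K2C2k+1}. The graph $K_1 + C_n$ is the disjoint union of an isolated vertex and $C_n$, so in the complement $\overline{K_1 + C_n}$ that vertex becomes universal; removing it leaves precisely $\overline{C_n}$. Equivalently, deleting the center point $p_c$ from the concrete representation built in the theorem already yields a representation of $\overline{C_n}$, so $\overline{C_n}$ is an induced subgraph of $\overline{K_1 + C_n}$. Since the minimality argument of Theorem \ref{th:K2C2k+1} establishes that $\overline{K_1 + C_n}$ is a UDG, and UDGs are closed under induced subgraphs, it follows that $\overline{P_k}$ is a UDG.

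I do not anticipate a genuine obstacle: the substantive geometric content — the placement of equally spaced points on a circle together with the center point — has already been discharged in the theorem, and what remains is purely combinatorial bookkeeping, namely matching parities so that the chosen cycle has odd length and ensuring it is long enough ($n \geq k+1$) to contain $P_k$ as an induced subgraph. The single point I would state with care is the commutation of complementation with the operation of taking induced subgraphs, since the entire reduction rests on it.
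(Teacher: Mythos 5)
Your proposal is correct and is essentially the paper's own (implicit) derivation: the corollary follows from the explicit representation of $\overline{K_1+C_{2k+1}}$ constructed in the minimality part of Theorem~\ref{th:K2C2k+1}, together with the facts that $P_k$ is an induced subgraph of a sufficiently long odd cycle, that complementation commutes with taking induced subgraphs, and that the class of UDGs is hereditary. The parity bookkeeping and the removal of the universal vertex corresponding to $K_1$ are exactly the steps the paper relies on, so there is nothing to add.
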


\begin{theorem}\label{minimalevencycles}
	For every integer $k \geq 4$, $\overline{C_{2k}}$ is a minimal non-UDG.
\end{theorem}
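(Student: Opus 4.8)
The plan is to prove the two halves of the statement separately. Minimality is the easy half: $\overline{C_{2k}}$ is vertex-transitive and deleting any vertex of $C_{2k}$ leaves $P_{2k-1}$, so every one-vertex-deleted subgraph of $\overline{C_{2k}}$ is isomorphic to $\overline{P_{2k-1}}$, which is a UDG by Corollary~\ref{col:Pk}. Since the class of UDGs is hereditary, it then suffices to show that $\overline{C_{2k}}$ itself is not a UDG for $k \geq 4$; the rest of the argument is devoted to this.

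For the non-UDG part I would argue by contradiction. Label the vertices of $C_{2k}$ as $v_1, \ldots, v_{2k}$ with edges $v_iv_{i+1}$ (indices mod $2k$), suppose $\overline{C_{2k}}$ has a representation $f$, and write $p_i = f(v_i)$ and $a_i = [p_i, p_{i+1}]$. The first step is to record which of these segments must cross. Because $2k \geq 8$, any six consecutive vertices induce a $P_6$ in $C_{2k}$ (their end-vertices are non-adjacent), so Lemma~\ref{lem:coP6} forces $a_i$ and $a_{i+2}$ to cross; and any two edges at cyclic distance at least $3$ span an induced $2K_2$, so Lemma~\ref{lem:2K2cross} forces them to cross as well. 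Hence every pair of non-adjacent segments among $a_1, \ldots, a_{2k}$ crosses. This is exactly where the hypothesis $k \geq 4$ is used: for $2k = 6$ there is no induced $P_6$, the distance-$2$ crossings are unavailable, and indeed $\overline{C_6}$ is a UDG.

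The second step extracts the contradiction by reproducing the final geometric argument in the proof of Lemma~\ref{lem:co-eat}. Color $p_i$ red if $i$ is odd and blue if $i$ is even, and consider the three pairwise non-adjacent edges $e_1 = a_1$, $e_2 = a_3$, $e_3 = a_5$ with supporting lines $L_1, L_3, L_5$. Whenever one walks along a path of consecutive vertices all of whose edges cross a fixed $a_j$, the visited points alternate sides of $L_j$; tracking the parity of the number of edges traversed (and using that $2k$ is even) shows that $L_1$ has the red points $p_3, p_5$ on one side and the blue points $p_4, p_6$ on the other, and symmetrically that $L_3$ separates $\{p_1, p_5\}$ from $\{p_2, p_6\}$ and $L_5$ separates $\{p_1, p_3\}$ from $\{p_2, p_4\}$. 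But $e_1, e_2, e_3$ pairwise cross, and exactly as at the end of the proof of Lemma~\ref{lem:co-eat}, three pairwise-crossing segments cannot simultaneously admit all three of these colour-separations: some line $L_j$ must have a red end-vertex of one edge and a blue end-vertex of another on the same side. This contradiction finishes the proof.

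I expect the main obstacle to be the bookkeeping in the second step, namely verifying the three colour-separations rigorously. The delicate point is that for a line such as $L_3$ the separation must be obtained by walking the long way around the cycle, through $p_5, p_6, \ldots, p_{2k}, p_1$, and one has to check that every edge on that walk is non-adjacent to $a_3$ (so that it indeed crosses $a_3$) and that the number of traversed edges has the parity needed to place $p_1$ and $p_5$ on the same side; the analogous short walks $p_3 p_4 p_5 p_6$ and $p_1 p_2 p_3 p_4$ handle $L_1$ and $L_5$. Once these separations are established, the contradiction is an immediate appeal to the planar geometry of three pairwise-crossing segments already carried out in Lemma~\ref{lem:co-eat}.
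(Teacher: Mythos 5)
Your proposal is correct, and it takes a genuinely different route from the paper on the hard half of the statement. The paper splits into two cases: for $k \geq 5$ it observes that $C_{2k}$ contains an edge-asteroid triple and invokes Lemma~\ref{lem:co-eat} as a black box, while $k=4$ needs a separate ad hoc argument (there it shows via Lemma~\ref{lem:2K2cross} that $p_{v_3}$ and $p_{v_8}$ lie on opposite sides of $L(p_{v_1},p_{v_2})$, so $[p_{v_1},p_{v_8}]$ and $[p_{v_2},p_{v_3}]$ cannot cross, contradicting Lemma~\ref{lem:coP6}). The case split is forced on the paper because $C_8$ has no edge-asteroid triple --- indeed no three edges of $C_8$ even pairwise induce a $2K_2$, since three pairwise gaps of length at least $2$ cannot fit in an $8$-cycle --- so Lemma~\ref{lem:co-eat} is simply unavailable at $k=4$. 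Your argument sidesteps this by inlining the proof of Lemma~\ref{lem:co-eat} rather than its statement: you first upgrade the crossing information using \emph{both} lemmas (Lemma~\ref{lem:coP6} supplies the distance-$2$ crossings that the $2K_2$ formalism cannot give, Lemma~\ref{lem:2K2cross} the rest), then rebuild the three colour-separations by parity walks around the cycle (correctly noting that $L_3$ needs the long walk through $p_5,p_6,\ldots,p_{2k},p_1,p_2$, whose $2k-4$ and $2k-3$ edge counts give the right parities precisely because the cycle is even), and finally reuse the planar fact about three pairwise-crossing segments from the end of the co-eat proof --- a fact whose validity does not depend on the edge-asteroid hypothesis, so the reuse is legitimate. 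What you buy is a uniform treatment of all $k \geq 4$ with no case distinction; what the paper's route buys is that the generic case $k\geq 5$ becomes a one-line corollary of an already-proved lemma. Your minimality argument is identical to the paper's. As a side benefit, your walk-based bookkeeping is actually tighter than the paper's $k=4$ text, which nominally applies Lemma~\ref{lem:2K2cross} to the pairs $\{v_1v_2, v_3v_4\}$ and $\{v_1v_2, v_7v_8\}$ even though these induce $P_4$'s rather than $2K_2$'s (those two crossings really need Lemma~\ref{lem:coP6}, exactly as in your step 1).
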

\begin{proof}
	Note that by removing a vertex from $\overline{C_{2k}}$ we get $\overline{P_{2k-1}}$,
	which is UDG by Corollary \ref{col:Pk}. Therefore it remains to show that
	$\overline{C_{2k}}$ is not UDG.
	 For $k \geq 5$ the desired result immediately follows from 
	Lemma \ref{lem:co-eat} and the fact that $C_{2k}$ contains an edge-asteroid triple. 
	To prove the result for $k=4$, consider $G=(V,E)$ with $V = \{ v_1, \ldots, v_8 \}$ and
	$E = \{ (v_1,v_8) \} \cup \{ (v_i,v_j) : |i-j| = 1\}$, and let $f$ be a representation of $\overline{G}$,
	and let $p_v$ denotes $f(v)$, as before.
	By Lemma \ref{lem:2K2cross} the linear interval corresponding to an edge of $G$,
	different from $v_1v_2$, $v_1v_8$ and $v_2v_3$,
	crosses $[p_{v_1},p_{v_2}]$. This leads to the conclusion that $p_{v_3}$ and $p_{v_8}$
	are on different sides of $L(p_{v_1},p_{v_2})$. Therefore $[p_{v_1},p_{v_8}]$ and
	$[p_{v_2},p_{v_3}]$ do not cross, which contradicts Lemma \ref{lem:coP6}.
\end{proof}

\begin{theorem}\label{th:starC2k}
	For every integer $k \geq 4$, $C_{2k}^*$ is a minimal non-UDG.
\end{theorem}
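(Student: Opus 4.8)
The plan is to prove the two halves of the statement separately: that $C_{2k}^*$ is not a unit disk graph, and that it is vertex‑minimal with this property. For the non‑UDG part with $k \geq 5$ I would invoke the contrapositive of Lemma~\ref{lem:star-eat}: since $C_{2k}^* = (C_{2k})^*$ with $C_{2k}$ taken in its proper bipartition, it suffices to exhibit an edge‑asteroid triple in the bipartite graph $C_{2k}$. Writing the cycle on vertices $0,1,\ldots,2k-1$, I would take the three edges $e_1=\{ 0,1 \}$, $e_2=\{ 3,4 \}$, $e_3=\{ 6,7 \}$. For each index $i$, deleting the closed neighbourhood of the endpoints of $e_i$ removes four consecutive vertices and leaves a single arc (a path) that, because $2k\geq 10$, still contains the other two edges; that arc is exactly the path witnessing the edge‑asteroid condition for the remaining pair. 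Hence $C_{2k}$ has an edge‑asteroid triple, and $C_{2k}^*$ is not a UDG.

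The case $k=4$ is the one genuinely obstructed point, and it is where I expect the main difficulty: the counting behind the triple above fails, since three edges of $C_8$ cannot be pairwise far enough apart — three pairwise circular distances of at least $3$ would force at least $9$ vertices — so $C_8$ carries no edge‑asteroid triple and Lemma~\ref{lem:star-eat} is silent. Instead I would compute the complement of $C_8^*$ directly. Its non‑edges are exactly the eight ``long'' cross pairs $v_1v_4,\ v_4v_7,\ v_7v_2,\ v_2v_5,\ v_5v_8,\ v_8v_3,\ v_3v_6,\ v_6v_1$, and these form the single $8$‑cycle $v_1v_4v_7v_2v_5v_8v_3v_6v_1$. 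Therefore $\overline{C_8^*}\cong C_8$, whence $C_8^*\cong\overline{C_8}$. Since $\overline{C_8}$ is a minimal non‑UDG by Theorem~\ref{minimalevencycles}, so is $C_8^*$; note that this isomorphism delivers both non‑membership and minimality at once for $k=4$. The coincidence is special to $k=4$: for $k\geq 5$ a vertex of $C_{2k}^*$ has degree $k+1$ while a vertex of $\overline{C_{2k}}$ has degree $2k-3$, so no such isomorphism can exist.

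For minimality when $k\geq 5$ I would exploit that $C_{2k}^*$ is vertex‑transitive: the rotation $v_i\mapsto v_{i+1}$ is an automorphism, since it interchanges the two parts (each a clique) and permutes the cross/cycle edges among themselves. Hence every one‑vertex deletion is isomorphic to $(C_{2k}\setminus v_1)^*=P_{2k-1}^*$, where $P_{2k-1}$ carries its inherited bipartition, and it remains only to show $P_{2k-1}^*$ is a UDG. I would give an explicit representation: label the odd vertices $u_1,\ldots,u_k$ and the even vertices $w_1,\ldots,w_{k-1}$, so that $u_i\sim w_j$ in $P_{2k-1}^*$ exactly when $j\in\{ i-1,i \}$. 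Fix a small $s>0$ with $(k-1)s\leq 1$ (e.g.\ $s=1/k$) and place $u_i$ at $(0,is)$ and $w_j$ at $\bigl(\sqrt{1-s^2},\,(j+\tfrac12)s\bigr)$. Each part then lies on a vertical segment of total extent at most $1$, so each is realized as a clique; and a cross pair lies at distance at most $1$ precisely when its height gap is at most $s$. The smallest such gap is $s/2<s$ (for path‑adjacent pairs) and the next smallest is $3s/2>s$, so the cross‑adjacencies are exactly the path edges, realizing $P_{2k-1}^*$.

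In summary, the only non‑routine ingredient is the identification $C_8^*\cong\overline{C_8}$ forced by the absence of an edge‑asteroid triple at $k=4$; the explicit triple for $k\geq 5$ and the staggered two‑line representation for minimality are both straightforward once set up.
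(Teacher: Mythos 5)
Your proposal is correct and follows essentially the same route as the paper: Lemma~\ref{lem:star-eat} plus an edge-asteroid triple in $C_{2k}$ for $k \geq 5$, the identification $C_8^* = \overline{C_8}$ combined with Theorem~\ref{minimalevencycles} for $k = 4$, and minimality via showing $P_{2k-1}^*$ is a UDG. The only deviation is cosmetic: where the paper invokes the lobster representation of Theorem~\ref{basiclobster}, you give a self-contained staggered two-line representation of $P_{2k-1}^*$ (essentially that construction stripped to the path case), and your explicit asteroid triple, the verification of $C_8^* = \overline{C_8}$, and the degree count are details the paper leaves implicit.
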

\begin{proof}
	For $k \geq 5$ the theorem immediately follows from 
	Lemma \ref{lem:star-eat} and the fact that $C_{2k}$ contains an edge-asteroid triple.
	Notice that $C_8^*=\overline{C_8}$ and hence the conclusion follows 
	from Theorem~\ref{minimalevencycles}.
	We  remark that one can also prove that $C_8^*$ is not a unit disk graph by 
	similar means as in Theorem~\ref{minimalevencycles} by proving a *-analog 
	of Lemma~\ref{lem:coP6}.  
	
	To prove the minimality of $C_{2k}^*$ it is sufficient to show
	that $P_{s}^*$ is UDG for every natural $s$. Such a representation could be seen
	in the Figure~\ref{fig:simple_lobster_star_repr} with a description in Theorem~\ref{basiclobster}. 
\end{proof}

Using Lemmas \ref{lem:co-eat} and \ref{lem:star-eat} one can find more forbidden
(not necessarily minimal) induced subgraphs for the class of unit disk graphs. 
For example, 
$\overline{S_{3,3,3}}$, $\overline{F_1}$, $\overline{F_2}$, $\overline{F_3}$,
$S_{3,3,3}^*$, $F_1^*$, $F_2^*$ and $F_3^*$ are forbidden, since each of 
%$S_{3,3,3}$, $F_1$, $F_2$ and $F_3$ (see Figure TODO) 
the graphs $S_{3,3,3}$, $F_1$, $F_2$ and $F_3$ (see Figure \ref{fig:minForbWithEAT}) contains 
an edge-asteroid triple. Also, $\overline{F_4} $ and $F_4^*$ are forbidden, as they coincide with
$F_1^*$ and $\overline{F_1}$, respectively.
%One may check that $F_4=\overline{F_1^b}$, and from this deduce that $F_4^*=\overline{F_1}$ and 
%$\overline{F_4}=F_1^*$ are forbidden. 
The results of the next section imply that all the mentioned forbidden graphs are in fact minimal.

\begin{figure}[H]
	\centering
	\begin{tikzpicture}[scale=.6,auto=left]
	% S_{3,3,3}
		\node[w_vertex] (1) at (1,0) { }; 	
		\node[w_vertex] (2) at (0,-1) { };
		\node[w_vertex] (3) at (1,-1) { };
		\node[w_vertex] (4) at (2,-1) { };
		\node[w_vertex] (5) at (0,-2) { };
		\node[w_vertex] (6) at (1,-2) { };
		\node[w_vertex] (7) at (2,-2) { };
		\node[w_vertex] (8) at (0,-3) { };
		\node[w_vertex] (9) at (1,-3) { };
		\node[w_vertex] (10) at (2,-3) { };
		
		\foreach \from/\to in {1/2,1/3,1/4, 2/5,3/6,4/7}
	    	\draw (\from) -- (\to);
	    	
	    	\foreach \from/\to in {5/8,6/9,7/10}
	    	\draw[very thick] (\from) -- (\to);
	    	
	    	\coordinate [label=center:$S_{3,3,3}$] (S333) at (1,-4);

	% H_1
		\node[w_vertex] (1) at (4,-2) { }; 	
		\node[w_vertex] (2) at (5,-3) { };
		\node[w_vertex] (3) at (6,-2) { };
		\node[w_vertex] (4) at (7,-3) { };
		\node[w_vertex] (5) at (8,-2) { };
		\node[w_vertex] (6) at (7,-1) { };
		\node[w_vertex] (7) at (5,-1) { };
		\node[w_vertex] (8) at (6,0) { };
		\node[w_vertex] (9) at (6,1) { };
		
		\foreach \from/\to in {2/3,3/4,5/6,6/3,3/7,7/1,7/8,8/6}
	    	\draw (\from) -- (\to);
	    	
	       \foreach \from/\to in {1/2,4/5,8/9}
	    	\draw[very thick] (\from) -- (\to);
	    	
	    	\coordinate [label=center:$F_1$] (F1) at (6,-4);

	% H_2
		\node[w_vertex] (1) at (10,-2) { }; 	
		\node[w_vertex] (2) at (11,-1) { };
		\node[w_vertex] (3) at (12,-1) { };
		\node[w_vertex] (4) at (13,-2) { };
		\node[w_vertex] (5) at (12,-3) { };
		\node[w_vertex] (6) at (11,-3) { };
		\node[w_vertex] (7) at (10,-1) { };
		\node[w_vertex] (8) at (14,-2) { };
		\node[w_vertex] (9) at (10,-3) { };
		
		\foreach \from/\to in {1/2,2/3,3/4,4/5,5/6,6/1}
	    	\draw (\from) -- (\to);
	    	
	       \foreach \from/\to in {2/7,4/8,6/9}
	    	\draw[very thick] (\from) -- (\to);
	    	
	    	\coordinate [label=center:$F_2$] (F2) at (11.5,-4);
	    	
	% H_3
		\node[w_vertex] (1) at (16,-2) { }; 	
		\node[w_vertex] (2) at (17,-1) { };
		\node[w_vertex] (3) at (18,-1) { };
		\node[w_vertex] (4) at (19,-2) { };
		\node[w_vertex] (5) at (18,-3) { };
		\node[w_vertex] (6) at (17,-3) { };
		
		\node[w_vertex] (7) at (17,0) { };
		\node[w_vertex] (8) at (17,1) { };
		\node[w_vertex] (9) at (18,0) { };
		\node[w_vertex] (10) at (18,1) { };
		
		\foreach \from/\to in {1/2,2/3,3/4,4/5,6/1,2/7,3/9}
	    	\draw (\from) -- (\to);
	    	
	       \foreach \from/\to in {7/8,9/10,5/6}
	    	\draw[very thick] (\from) -- (\to);
	    	
	    	\coordinate [label=center:$F_3$] (F3) at (17.5,-4);	
	
	% H_4
		\node[w_vertex] (1) at (21,-2) { }; 	
		\node[w_vertex] (2) at (22,-1) { };
		\node[w_vertex] (3) at (23,-1) { };
		\node[w_vertex] (4) at (24,-2) { };
		\node[w_vertex] (5) at (23,-3) { };
		\node[w_vertex] (6) at (22,-3) { };
		
		\node[w_vertex] (7) at (22,0) { };
		\node[w_vertex] (8) at (25, -2) {};	
		\node[w_vertex] (9) at (23,0) { };
		
		\foreach \from/\to in {1/2,2/3,3/4,4/5,6/1,2/7,3/9}
	    	\draw (\from) -- (\to);
	    	
	       \foreach \from/\to in {5/6, 4/8}
	    	\draw (\from) -- (\to);
	    	
	    	\coordinate [label=center:$F_4$] (F4) at (22.5,-4);

	\end{tikzpicture}
	\caption{Bipartite graphs $S_{3,3,3}$, $F_1$, $F_2$ and $F_3$ contain an edge-asteroid triple.
	Graph $F_4$ is the bipartite complementation of $F_1$.}
	\label{fig:minForbWithEAT}
\end{figure}
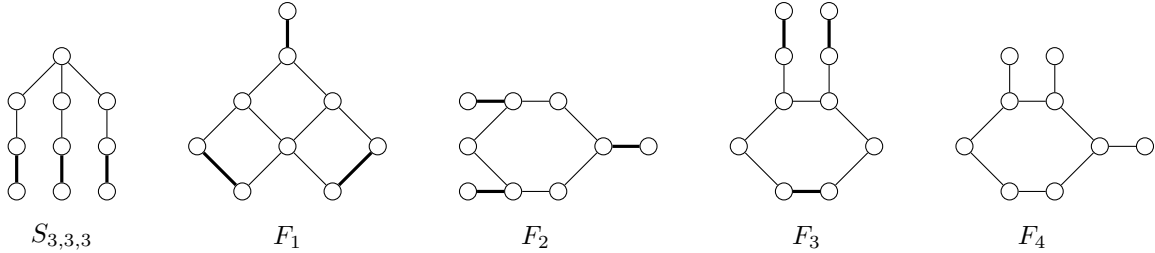

%%%%%%%%%%%%%%%%%%%%%%%%%%%%%%%%%%%%%%%%%%%
%% The structure of $2K_2$-free co-bipartite unit disk graphs
%%%%%%%%%%%%%%%%%%%%%%%%%%%%%%%%%%%%%%%%%%%

%%%%%%%%%%%%%%%%%%%%%%%%%%%%%%%%%%%%
%%%%%%%%%%%%%%%%%%%%%%%%%%%%%%%%%%%%
\section{Structure of some subclasses of co-bipartite unit disk graphs}
\label{sec:structure}
%%%%%%%%%%%%%%%%%%%%%%%%%%%%%%%%%%%%
%%%%%%%%%%%%%%%%%%%%%%%%%%%%%%%%%%%%

%%%%%%%%%%%%%%%%%%%%%%%        STRUCTURE       %%%%%%%%%%%%%%%%%%%%

%In this section we will consider bipartite graphs forbidding $S_{3,3,3}, H_2, H_3, H_4$ from Figure~\ref{fig:minForbWithEAT} and the cycles of length at least 8.
For easier reference, let $C_6^{+3c}$ denotes $F_4$ which reads 
``cycle on 6 vertices plus 3 consecutive (pendant) vertices'', 
$C_6^{+3nc}$ denotes $F_2$ which reads ``cycle on 6 vertices plus 3 non-consecutive (pendant) vertices''
and let $C_6^{+2l2}$ denotes $F_3$ which reads 
``cycle on 6 vertices plus 2 (consecutive pendant) paths of length 2''. 
It follows from the previous section that for every co-bipartite unit disk graph $G=(U,W,E)_c$, both
$G^*$ and $\overline{G}$ lie in the class 
$
	Free(S_{3,3,3}, C_3, C_5, C_6^{+3c}, C_6^{+3nc}, C_6^{+2l2}, C_7, C_8, \ldots ),
$
i.e. the class of bipartite graphs which do not contain $S_{3,3,3}, C_6^{+3c}, C_6^{+3nc}, C_6^{+2l2}$
and $C_k$ for $k \geq 8$ as induced subgraphs. Thus, obtaining the structure of the graphs 
in this class and showing which of them give rise to co-bipartite UDGs, would give complete 
%induced subgraph 
characterization of the class of co-bipartite UDGs. 
As a step to the desired characterization of co-bipartite UDGs
%, in this section, 
we additionally forbid $C_4$ and get structural characterization of graphs in the resulting class 
$$
	\mathcal{X} = Free(S_{3,3,3}, C_3, C_4, C_5, C_6^{+3c}, C_6^{+3nc}, C_6^{+2l2}, C_7, C_8, \ldots ).
$$
Further, we show that for every graph $G=(U,W,E) \in \mathcal{X}$ both $G^*$ and $\overline{G}$
are UDGs. In other words we obtain both structural and forbidden induced subgraph characterizations for
the following two classes of co-bipartite UDGs:
%the complete induced subgraph characterization of two subclasses of co-bipartite UDG graphs: the 
%class of $C_4^*$-free co-bipartite UDGs and the class of $2K_2$-free co-bipartite UDGs. 
%In particular, we obtain both structural and forbidden induced subgraph characterizations for the following classes:

\begin{itemize}
%\item[$\mathcal{X}$] -- the class $Free(S_{3,3,3}, C_3, C_4, C_5, C_6^{+3c}, C_6^{+3nc}, C_6^{+2l2}, C_7, C_8, \ldots)$;
\item[$\mathcal{Y}$] -- the class of $C_4^*$-free co-bipartite UDGs, i.e. co-bipartite UDGs $G=(U,W,E)_c$ such that $G^*=(U,W,E)$ do not contain $C_4$;
\item[$\mathcal{Z}$] -- the class of $2K_2$-free co-bipartite UDGs.
\end{itemize}
In Section $5.1$ we describe the structure of the graphs in the class $\mathcal{X}$. By the results of 
the previous section it follows that 
$\mathcal{Y} \subseteq \mathcal{X}^*$ and
$\mathcal{Z} \subseteq \overline{\mathcal{X}}$, where $\mathcal{X}^* = \{ G^* : G=(U,W,E) \in \mathcal{X} \}$ and $\overline{\mathcal{X}} = \{ \overline{G} : G \in \mathcal{X} \}$. 
In Section 5.2 we use the structure of graphs in $\mathcal{X}$ to obtain a UDG-representation of 
every graph in $\mathcal{X}^*$.
This implies that $\mathcal{Y}=\mathcal{X}^*$, and gives both structural and induced forbidden subgraph characterization for the class $\mathcal{Y}$.
In Section 5.3 we show that a UDG-representation of $G^*$ can be transformed to a UDG-representation 
of $\overline{G}$, provided that the former representation satisfies certain conditions.
%we explore a transformation, showing how one can obtain the UDG-representation of $\overline{G}$ from a UDG-representation of $G^*$ for some bipartite $G$. 
Finally, in section 5.4, we use this transformation to deduce UDG-representation for every graph
in $\overline{\mathcal{X}}$, which implies that $\mathcal{Z}=\overline{\mathcal{X}}$. As before this
gives both structural and forbidden subgraph characterization for the graphs in $\mathcal{Z}$.

%We start by describing the structure of the graphs in the class $X$. 

%%%%%%%%%%%%%%%%%%%%%%%%%%%%%%%%%%%%
%%%%%%%%%%%%%%%%%%%%%%%%%%%%%%%%%%%%
\subsection{Structure of graphs in $\mathcal{X}$} \label{ss:structure}
%%%%%%%%%%%%%%%%%%%%%%%%%%%%%%%%%%%%
%%%%%%%%%%%%%%%%%%%%%%%%%%%%%%%%%%%%

Notice that the only cycle which is allowed in the class $\mathcal{X}$ is a $C_6$, 
which we call a \textit{hexagon}. 
It follows that a graph $G \in \mathcal{X}$ which do not contain a hexagon is a forest without $S_{3,3,3}$. 
It is not hard to convince oneself 
that every connected component of a $S_{3,3,3}$-free forest contains a path such that all other vertices are within distance 2 from the vertices of the path. Such graphs consist of caterpillar-like connected components which are known in the literature as \textit{lobsters}. 
\textit{Gluing vertices} of a lobster are the endpoints of a shortest path whose second
neighbourhood dominates the graph. See Figure \ref{fig:lobster} for an example of lobster with 
highlighted gluing vertices.
Now we turn to the general case, where $G \in \mathcal{X}$ is allowed to contain a hexagon. 

Let $H$ be a hexagon. We say that vertices of a set $S \subseteq V(H)$ of hexagon $H$ are
\textit{consecutive}, if $H[S]$ is
connected. 
Any two vertices of $H$ which are distance 3 away from each other we call a \textit{diagonal} of $H$.
%We also say that a diagonal of a hexagon is a set of any two vertices which are distance 3 away
%from each other in $H$. 
Two hexagons $H_1$ and $H_2$ are disjoint if $S=V(H_1) \cap V(H_2)=\emptyset$, otherwise we say that they share the set $S$. If $|S|=2$ and the two vertices in $S$ are adjacent, we say that the hexagons share an edge. 

\begin{lemma}\label{hex}
If two hexagons $H_1$ and $H_2$ of $G \in \mathcal{X}$ are not disjoint then one of the following holds:
\begin{itemize}
\item They share exactly one vertex.
\item They share an edge. 
\item They share two vertices that form a diagonal in each of the hexagons.
\item They share 4 consecutive vertices, i.e. the intersection of two hexagons is a $P_4$. 
\end{itemize}
Further, $E(G[V(H_1) \cup V(H_2)])=E(G[V(H_1)]) \cup E(G[V(H_2)])$. 

\end{lemma}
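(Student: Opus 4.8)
The plan is to use the fact that membership in $\mathcal{X}$ leaves essentially no room for cycles. Since $\mathcal{X}$ forbids $C_3,C_4,C_5$ and all $C_k$ with $k\ge 7$ as induced subgraphs, the only induced cycle it permits is a hexagon; because every odd cycle and every $4$-cycle contains an induced one, $G$ is bipartite and has no $4$-cycle whatsoever, and every hexagon of $G$ is automatically chordless (a chord of a hexagon is a long diagonal and would create a $C_4$). I fix a bipartition $(A,B)$ of $G$. Two consequences of $C_4$-freeness drive the whole argument: (i) no two vertices of $G$ have two common neighbours; and (ii) a vertex $z\notin V(H)$ has at most one neighbour on an induced hexagon $H$, since two neighbours of $z$ in one part of $H$ would be at distance $2$ on $H$ and would give $z$ and the middle vertex of $H$ two common neighbours. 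The central tool, Claim~A, follows at once: if $u,v\in S:=V(H_1)\cap V(H_2)$ lie in the same part, then, same-part vertices of a hexagon being at distance $2$ with a common neighbour, $u,v$ have a common neighbour in $H_1$ and one in $H_2$, and by (i) these coincide, hence lie in $S$.

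For the classification I would argue on $s=|S|$ together with how $S$ splits between $A$ and $B$. For $s=2$, two same-part shared vertices would force a third shared vertex by Claim~A, so the two lie in different parts, hence at odd distance ($1$ or $3$) in each hexagon; distance $1$ in one and $3$ in the other produces an edge together with a length-$3$ path between its ends, i.e. a $C_4$, so the distances agree and $S$ is a shared edge or a shared diagonal. For $s=3$: three same-part vertices are all of one part of each hexagon, and Claim~A forces the three connecting vertices to be shared too, giving $s=6$; the remaining $2$--$1$ split makes the two same-part vertices share, by Claim~A, the opposite-part vertex as common neighbour, so $S$ is a $P_3$ consecutive in both hexagons -- but then the two length-$4$ arcs avoiding this $P_3$ close into an $8$-cycle whose only possible chords are cross edges that themselves create a $C_4$, so we obtain a forbidden induced $C_8$. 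For $s=4$ the $3$--$1$ split again collapses to $s=6$, so the split is $2$--$2$; applying Claim~A to the $A$-pair and the $B$-pair puts both of their common neighbours into $S$ and assembles $S$ into a path $a\,b\,c\,d$, chordless since $a\sim d$ would give a $C_4$ -- the $P_4$ case. Finally $s=5$ forces $s=6$ by the three-in-a-part subargument, and $s=6$ makes $H_1,H_2$ two spanning $6$-cycles on the same vertices, where an edge of one not in the other is a long diagonal of the other and yields a $C_4$; hence $H_1=H_2$. Since hexagons are induced, $E(G[V(H_i)])=E(H_i)$.

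For the second statement I must rule out any edge between $X:=V(H_1)\setminus S$ and $Y:=V(H_2)\setminus S$. Suppose $xy$ is such an edge; by (ii), $y$ meets $H_1$ only at $x$ and $x$ meets $H_2$ only at $y$. The strategy is to produce from $xy$ an induced cycle of length different from $6$, which is forbidden. The bookkeeping device is that for any induced hexagon $H_3\subseteq G[V(H_1)\cup V(H_2)]$ one has $|V(H_3)\cap V(H_1)|+|V(H_3)\cap V(H_2)|=6+|V(H_3)\cap S|$; since the classification allows an intersection of two hexagons to have size only $1,2,4$ (or $6$, excluded here because $H_3$ straddles both), an $H_3$ meeting $S$ in an \emph{odd} number of vertices cannot have both intersection sizes legal, so some pair among $H_1,H_2,H_3$ has an intersection of a size ($3$ or $5$) that the classification forbids, which in turn exposes a forbidden induced $C_8$ as in the $s=3$ case. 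Thus I would, starting from $xy$ and routing through the shorter arcs of the two hexagons toward $S$ (discarding at once any cross edge that closes a $C_4$), build an induced hexagon $H_3$ passing through an odd number of vertices of $S$, forcing such an illegal overlap.

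The main obstacle is exactly this last construction: carrying it out uniformly across the four overlap types and checking that the chosen cycle is induced -- each potential chord being a further $X$--$Y$ edge that one must show either reproduces a $C_4$ or can be re-routed around -- is where the real work lies, since, as the $2$--$2$ edge-sharing case already shows, the \emph{shortest} cross cycle can be a perfectly legal straddling hexagon and one must instead find the one with odd $S$-overlap. A secondary delicate point, on which the $s=3$ case of the classification (and hence the whole parity argument) rests, is the verification that the $8$-cycle built from the two length-$4$ arcs has no chords other than $C_4$-creating ones, so that it is a genuine forbidden induced $C_8$.
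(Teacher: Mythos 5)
Your classification of the intersection $S=V(H_1)\cap V(H_2)$ is correct and complete: Claim~A (via $C_4$-freeness, a common neighbour of two same-part shared vertices is itself shared), the case analysis on $|S|$ and its split across the bipartition, and the chordless $C_8$ you exhibit in the $2$--$1$ case are all sound. This is in fact a genuine proof of the part that the paper dispatches with a single sentence (``in all the other cases a cycle of forbidden length $3,4,5,7$ or $8$ would arise''), so for the first assertion you have done more than the paper itself records.

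The gap is the second assertion, $E(G[V(H_1)\cup V(H_2)])=E(G[V(H_1)])\cup E(G[V(H_2)])$. Your parity identity $|V(H_3)\cap V(H_1)|+|V(H_3)\cap V(H_2)|=6+|V(H_3)\cap S|$ is valid, and it does show that a straddling hexagon meeting $S$ in an odd number of vertices is impossible; but you never actually construct such an $H_3$ from a cross edge $xy$ --- you explicitly defer this (``where the real work lies''), so the statement remains unproved in your write-up. The missing construction is, however, easier than you fear, and no re-routing around stray cross chords is ever needed. In the diagonal and $P_4$ cases a cross edge closes a $C_4$ outright: by bipartiteness its endpoints lie at distances $1$ and $2$, along their respective hexagons, from a common shared vertex. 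In the one-vertex and shared-edge cases, let $d_1,d_2$ be the lengths of the arcs joining $x$ and $y$ to a shared vertex $v$ inside $H_1$ and $H_2$ (avoiding the other shared vertex, if any), choosing $v$ so that $d_1+d_2$ is minimal; then $d_1+d_2$ is odd and at most $5$, so the cycle formed by the two arcs and $xy$ has length $4$ (forbidden) or $6$. In the length-$6$ case this cycle is automatically chordless: a chord inside $V(H_1)$ or inside $V(H_2)$ would contradict the hexagons being induced, and a cross chord $a_kb_l$ has $k+l$ odd with $k\le d_1$, $l\le d_2$, $(k,l)\neq(d_1,d_2)$, hence $k+l=3$ and it closes a $C_4$ through $v$. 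So it is a hexagon distinct from $H_1,H_2$ meeting $S$ exactly in $\{v\}$, and your own parity count ($|V(H_3)\cap V(H_1)|+|V(H_3)\cap V(H_2)|=7$ with both summands forced into $\{1,2,4\}$) gives the contradiction. With this paragraph added, your argument would be a complete proof of the lemma, organized around bipartiteness and $C_4$-freeness rather than the paper's unrecorded case enumeration.
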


\begin{proof}
It can be easily checked that in all the other cases a cycle of forbidden length 3, 4, 5, 7 or 8 would arise.
\end{proof}

For $k \geq 2$ let us define the graph $C_{6,k}$ to be a graph with 
$V(C_{6,k})=\{a, b, a_j, b_j : 1 \leq j \leq k\}$ and
$E(C_{6,k})=\{aa_j, a_jb_j, b_jb :1 \leq j \leq k\}$ (see Figure~\ref{fig:hexagon}). In particular,
$C_{6,2}$ is isomorphic to $C_6$.
A connected graph is \textit{2-connected} if there is no vertex whose removal disconnects the graph.
A maximal 2-connected subgraph of a graph is called \textit{2-connected component} of this graph.

\begin{lemma} \label{noshared}
Let $G \in \mathcal{X}$ be a 2-connected graph with no two hexagons sharing an edge. 
Then the graph $G$ is isomorphic to $C_{6,k}$ for some $k$. 
\end{lemma}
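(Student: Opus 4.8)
The plan is to build $G$ up from a single hexagon by attaching internally disjoint length-$3$ paths between two fixed poles, and to show that the forbidden subgraphs defining $\mathcal{X}$, together with the no-shared-edge hypothesis, leave no other possibility. Since $G$ is $2$-connected it contains a cycle, and a shortest cycle of any graph is chordless; as the hexagon is the only induced cycle permitted in $\mathcal{X}$, $G$ contains a hexagon, i.e. a copy of $C_{6,2}$. I would therefore fix a subgraph $T\cong C_{6,k}$ of $G$ with the largest possible number of vertices, write its poles as $a,b$ and its paths as $a\,a_t\,b_t\,b$ $(1\le t\le k)$, and set out to prove $T=G$, which immediately gives $G\cong C_{6,k}$.

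First I would check that $T$ is \emph{induced}. A chord of $T$ joins two non-adjacent vertices of $T$, which, since $G$ is bipartite and every edge joins the two colour classes, must be the pair $\{a,b\}$ or a pair $\{a_i,b_j\}$ with $i\ne j$; in either case the chord closes an induced $C_4$ (for instance $a\,a_1\,b_1\,b$ together with the chord $ab$), contradicting $G\in\mathcal{X}$. Hence $E(G[V(T)])=E(T)$. It then remains to rule out vertices outside $T$. If $V(T)\ne V(G)$, then by $2$-connectivity some component $K$ of $G-V(T)$ has at least two neighbours in $V(T)$, so $G$ contains a \emph{detour}: a path $P=u\,x_1\cdots x_r\,v$ with endpoints $u,v\in V(T)$ and interior in $K$. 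Choosing $P$ to be a shortest such detour forces $P$ to be chordless and its interior to meet $V(T)$ only at $u$ and $v$ (any further incidence would yield a strictly shorter detour), so $G[V(T)\cup V(P)]$ equals exactly $T$ together with $P$; consequently every cycle exhibited below is genuinely induced.

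The heart of the argument is a case analysis on the positions of $u,v$ in $T$ and on the length $\ell=r+1$ of $P$, with bipartiteness fixing the parity of $\ell$. The outcomes fall into three types. (a) \emph{Short or badly placed detours} create a forbidden configuration: a length-$2$ detour between two vertices sharing a neighbour in $T$, or a length-$3$ detour between two adjacent vertices of $T$, completes an induced $C_4$; a length-$4$ detour completes a hexagon meeting an existing hexagon of $T$ in three vertices, which is excluded by Lemma~\ref{hex}; and any longer detour completes an induced even cycle $C_{2m}$ with $2m\ge 8$. (b) A length-$5$ detour between two \emph{adjacent} vertices of $T$ completes a new hexagon sharing a single edge with an existing hexagon of $T$; this configuration is \emph{not} otherwise forbidden in $\mathcal{X}$, and it is exactly here that the hypothesis ``no two hexagons share an edge'' is invoked. (c) The only surviving detours enlarge $T$: a length-$3$ detour between the poles $a,b$ produces $C_{6,k+1}$, and a length-$3$ detour across a diagonal of a single hexagon ($k=2$) produces $C_{6,3}$. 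In both enlargement cases $G$ would contain a copy of $C_{6,\cdot}$ larger than $T$, contradicting maximality. Since every detour leads to a contradiction, $V(T)=V(G)$, and with $T$ induced this gives $G=T\cong C_{6,k}$.

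The main obstacle I anticipate is type (c) together with the fact that a hexagon has no canonical pole pair: a length-$3$ detour joining two diagonal vertices $a_i,b_j$ of $T$ is harmless when $k=2$ (it merely re-poles the hexagon, yielding $C_{6,3}$) but is fatal when $k\ge 3$ (it then closes an induced $C_8$ through a third path). Getting the bookkeeping right so that \emph{every} placement of $u,v$ is provably either forbidden or an enlargement, and in particular verifying that the exhibited $C_4$'s and $C_8$'s are induced (which rests on bipartiteness and on the minimal choice of $P$), is the delicate part; isolating the single place, type (b), where the no-shared-edge hypothesis is genuinely required is what makes the statement true as stated.
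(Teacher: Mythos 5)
Your proof is correct, and it reaches the conclusion by a genuinely different route than the paper. Both arguments fix a vertex\nobreakdash-maximal subgraph $T\cong C_{6,k}$, but they differ in how a vertex outside $T$ is converted into a contradiction. The paper works hexagon-centrically: it first proves, as a separate step using $2$-connectivity and a minimal connecting path, that no two hexagons of $G$ can share exactly one vertex, and then argues that any further hexagon meeting the maximal $C_{6,k}$ must, by Lemma~\ref{hex}, share a diagonal or a $P_4$ with its hexagons, each case extending $C_{6,k}$ and contradicting maximality. You work ear-centrically: $2$-connectivity supplies a shortest detour attached to $T$ at two vertices, and your classification by length and endpoint position produces, in every case, either a forbidden induced subgraph (an induced $C_4$, an induced even cycle of length at least $8$, two hexagons sharing a $P_3$ --- excluded by Lemma~\ref{hex} --- or two hexagons sharing an edge --- excluded by hypothesis) or an enlargement of $T$ (a $C_{6,k+1}$, or the re-poled $C_{6,3}$ when $k=2$), contradicting maximality. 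Your route buys two things: it dispenses entirely with the paper's one-vertex-sharing lemma, and it makes rigorous a step the paper only asserts, namely that $G\neq G'$ yields ``another hexagon sharing some vertices'' with the maximal $C_{6,k}$ --- in your version the ear itself is the certified structure, so no such existence claim is needed. The cost is the longer explicit case analysis with the induced-ness checks you flag. One small point to tighten: for a shortest detour $P=u\,x_1\cdots x_r\,v$ of length $\ell\ge 3$, a second $T$-neighbour $w$ of the end vertex $x_r$ is not excluded by rerouting $P$ (the detour $u\,x_1\cdots x_r\,w$ has the \emph{same} length); it is excluded because $v\,x_r\,w$ would itself be a detour of length $2<\ell$, i.e.\ your case (a) applied to a strictly shorter detour. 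With that reading, your claim that $G[V(T)\cup V(P)]$ is exactly $T$ together with $P$ is sound, and the case analysis is exhaustive.
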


\begin{proof}
First we will show that there are no two hexagons sharing one vertex. Suppose, for contradiction, there are two hexagons
$H_1$ and $H_2$ with one vertex in common, say $V(H_1) \cap V(H_2)=\{v\}$ for some $v \in V(G)$. By Lemma~\ref{hex}, apart from the 12 edges forming two cycles of length 6, there are no other edges in $G[V(H_1) \cup V(H_2)]$. Further, one can observe that any vertex $w \in V(G)$ outside the hexagons is adjacent to at most one vertex in $V(H_1) \cup V(H_2)$. Indeed, if it has at least two neighbours in $H_1$ or at least two neighbours in $H_2$ then a cycle of length at most 5 arises. Also, if $w$ is adjacent to one vertex in $H_1 \setminus v$ and to one vertex in $H_2 \setminus v$, then either $w$ creates a cycle of length not equal to 6 or $w$ is adjacent to a neighbour of $v$ in one of $H_1$ and $H_2$, and to the vertex
which is diagonally opposite to $v$ in the other hexagon, 
%not containing the neighbour, 
in which case we have two hexagons sharing an edge, hence again a contradiction. 
Now, as the graph is 2-connected, there is a path from $V(H_1) \setminus \{v\}$ to $V(H_2) \setminus \{v\}$. We pick a path $p=h_1v_1v_2 \ldots v_k h_2$ of minimal length, where $h_1 \in V(H_1) \setminus \{v\}$, $h_2 \in V(H_2) \setminus \{v\}$, $v_1,v_2 \ldots, v_k \notin V(H_1)\cup V(H_2)$, and $k \geq 2$. 
Then, $v_i$ has at most one neighbour in $V(H_1) \cup V(H_2)$ with 
the neighbour of $v_1$ being $h_1$, neighbour of $v_k$ being $h_2$, and $v_2, v_3, \ldots, v_{k-1}$ can only be adjacent
to $v$ by minimality of the path. Also, by minimality, the path $p$ does not have
chords, i.e. edges connecting two non-consecutive vertices of $p$. Now, if $v_i$ is 
adjacent to $v$ for some $i$, then either a cycle of length not equal to 6 arises or there are two hexagons sharing the edge $vv_i$. 
Otherwise, $p$ together with the shortest path between $h_1$ and $h_2$ in $V(H_1) \cup V(H_2)$ either induce a cycle of length more than 6, or one of $h_1$ or $h_2$ is a neighbour of $v$ in which case 
we have two hexagons sharing an edge ($vh_1$ or $vh_2$).
The contradiction shows that there are no two hexagons sharing a vertex. 

Now, as $G$ is 2-connected it contains a cycle of length 6. Let us consider the maximal subgraph $G'$ isomorphic to $C_{6,k}$ containing this cycle. We will show that $G$ coincides with $G'$. Suppose not, 
then there is another hexagon $C$ sharing some vertices with some of the hexagons of $G'$. 
If $C$ shares 4 consecutive vertices with some hexagon, then it must share at least one vertex with 
each of the hexagons of $G'$, which is possible only if $V(C) \cup V(G')$ induces $C_{6,k+1}$
in $G$. But this contradicts maximality of $G'$.
%it shares the same diagonal with all the hexagons. Thus $V(C) \cup V(C_{6,k})$ induces $C_{6,k+1}$ in $G$ contradicting maximality of $C_{6,k}$. 
Otherwise, if $C$ shares a diagonal with some of the hexagons of $G'$, then it either shares a diagonal with all hexagons or it shares one vertex with some hexagon. The latter case is impossible by the previous paragraph, and the former case proves that $V(C) \cup V(G')$ induces $C_{6,k+2}$ contradicting the maximality of $G'$. Thus, we deduce that $G$ is isomorphic to $C_{6,k}$.  
\end{proof}

\noindent
We say that an edge $xy$ of a graph $G$ is a \textit{cutset} if $G \setminus \{x,y\}$ has more connected
components than $G$.

\begin{lemma}\label{cutedge}
If $G \in \mathcal{X}$ has two hexagons $H_1$ and $H_2$ sharing an edge, then the edge is a cutset.
%Let $G \in \mathcal{X}$ have two hexagons sharing an edge. Then the edge is a cutset.
\end{lemma}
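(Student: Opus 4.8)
The plan is to argue contrapositively: I will show that deleting the two endpoints of the shared edge separates the remaining vertices of $H_1$ from those of $H_2$, which is exactly the statement that $xy$ is a cutset. Write the shared edge as $xy$, so that $H_1$ is the $6$-cycle $x\,u_4\,u_3\,u_2\,u_1\,y\,x$ and $H_2$ is the $6$-cycle $x\,w_4\,w_3\,w_2\,w_1\,y\,x$, with $V(H_1)\cap V(H_2)=\{x,y\}$. By Lemma~\ref{hex} the only edges inside $V(H_1)\cup V(H_2)$ are those of the two hexagons; in particular there is no edge between $A=\{u_1,u_2,u_3,u_4\}$ and $B=\{w_1,w_2,w_3,w_4\}$, so deleting $x$ and $y$ leaves $A$ and $B$ inducing two vertex-disjoint copies of $P_4$ with no edge between them. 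If $xy$ is not a cutset, the component of $G$ containing the hexagons remains connected after deleting $x,y$, so $A$ and $B$ lie in a common component of $G\setminus\{x,y\}$. I then fix a shortest $A$--$B$ path $P=p_0p_1\cdots p_m$ in $G\setminus\{x,y\}$, with $p_0=u_a\in A$ and $p_m=w_b\in B$. By minimality $P$ is chordless, its interior vertices lie outside $V(H_1)\cup V(H_2)$, and $m\ge 2$ (there is no $A$--$B$ edge). The whole argument will be to show that such a $P$ forces a forbidden induced subgraph.

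The engine of the contradiction is the ``outer $10$-cycle'' $O=x\,u_4\,u_3\,u_2\,u_1\,y\,w_1\,w_2\,w_3\,w_4\,x$, of which $xy$ is a chord splitting $O$ into $H_1$ and $H_2$. The path $P$ joins the vertex $u_a$ of $O$ to the vertex $w_b$ of $O$, and together with the two arcs of $O$ it produces two cycles: $C'$, using the arc through $y$, of length $a+b+m$, and $C''$, using the arc through $x$, of length $(10-a-b)+m$. Their lengths sum to $10+2m\ge 14$, so they cannot both equal $6$; and since in $\mathcal{X}$ the only permitted induced cycle is the hexagon, every induced cycle of length at least $4$ and different from $6$ is forbidden. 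Hence, if both $C'$ and $C''$ happen to be induced, at least one of them is a forbidden cycle and we are done.

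It remains to handle chords, and this finite-but-fiddly bookkeeping is the step I expect to be the main obstacle. Because $A$--$B$ has no edges, the hexagons are induced, and $P$ is chordless, any chord of $C'$ or $C''$ must join an interior vertex $p_i$ of $P$ to a vertex of $V(H_1)\cup V(H_2)$. Minimality of $P$ restricts these sharply: only $p_1$ can be adjacent to $A$ and only $p_{m-1}$ to $B$, while adjacencies of interior vertices to $x$ or to $y$ must be tracked separately (note that no $p_i$ is adjacent to both $x$ and $y$, since that would create the triangle $p_i x y$, already forbidden). Each such chord splits the relevant cycle into two strictly shorter cycles whose lengths sum to the original length plus $2$; iterating on the shortest piece and checking the few possible values of $a,b,m$ together with the resulting split lengths shows that one always reaches an induced cycle of length $3$, $4$, $5$, or at least $7$ — for example a chord $p_i x$ typically yields a $C_3$, $C_4$, or $C_5$ through $x$ together with the appropriate $u$- or $w$-segment of a hexagon. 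In every case a forbidden induced subgraph of $\mathcal{X}$ appears, contradicting $G\in\mathcal{X}$. This contradiction shows that no $A$--$B$ path exists in $G\setminus\{x,y\}$, so $xy$ is a cutset.
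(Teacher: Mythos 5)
Your setup (the minimal $A$--$B$ path, the observation that chords can only run from interior path vertices to the hexagons) is sound and matches the paper's own beginning, but the heart of your proof --- the ``finite-but-fiddly bookkeeping'' you defer --- is not just unproven; the procedure you describe is actually insufficient. Your iteration only ever examines cycles obtained by splitting $C'$ and $C''$ along their chords. Neither $C'$ nor $C''$ contains the shared edge $xy$ (each contains exactly one of $x$, $y$), so no cycle your scheme produces ever uses that edge; yet there are chord configurations in which \emph{every} piece of $C'$ and of $C''$ is an induced hexagon, and the only forbidden induced cycles pass through $xy$. Concretely, take $a=3$, $b=2$, $m=9$, with $p_2\sim y$, $p_6\sim y$, $p_3\sim x$, $p_7\sim x$ and no other hexagon adjacencies. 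This is compatible with all the restrictions you state (shortest path, no vertex adjacent to both $x$ and $y$). The pieces of $C'$ are $y\,u_1u_2u_3\,p_1p_2$, $\;y\,p_2p_3p_4p_5p_6$, $\;y\,p_6p_7p_8\,w_2w_1$, and the pieces of $C''$ are $x\,u_4u_3\,p_1p_2p_3$, $\;x\,p_3p_4p_5p_6p_7$, $\;x\,p_7p_8\,w_2w_3w_4$: all six are induced hexagons, so your iteration terminates having found nothing forbidden. The contradiction in this configuration lives only in the ``mixed'' cycles $x\,y\,p_2\,p_3$ and $x\,y\,p_6\,p_7$, which are induced $C_4$'s through the shared edge --- cycles your scheme never inspects. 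This is no accident: the paper's own final contradiction ($v_1v_2w_2w_3$ inducing a $C_4$) is exactly such a cycle through the shared edge, extracted after the chords to $v_1$ and to $v_2$ with smallest indices have been forced to close hexagons.

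Two smaller gaps compound this. First, you never establish that a vertex outside the hexagons has at most one neighbour in $V(H_1)\cup V(H_2)$; your triangle remark only excludes a $p_i$ adjacent to both $x$ and $y$, but, e.g., $p_1$ adjacent to both $u_a$ and $x$ must be excluded too (it falls to a $C_3$, $C_4$ or $C_5$ using the short arc of $H_1$ from $u_a$ to $x$) --- without this, your list of possible chords is incomplete. Second, that same claim forces $m\ge 3$, not merely $m\ge 2$. To repair the main gap you must add to your bookkeeping the cycles that use the edge $xy$ together with a $y$-chord and an $x$-chord (these kill the configuration above), or follow the paper's route: it first excludes $p_0\in\{u_1,u_4\}$ using the non-cycle forbidden subgraph $C_6^{+3c}$ --- an obstruction a cycles-only argument cannot invoke --- and then needs only two chords (the minimal ones to $x$ and to $y$) to manufacture the $C_4$ through the shared edge.
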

\begin{proof}
Let two hexagons share an edge, i.e. $V(H_1) \cap V(H_2)=\{v_1,v_2\}$ with $v_1v_2 \in E(G)$. 
Notice that each vertex in $V(G) \setminus (V(H_1) \cup V(H_2))$ has at most 1 neighbour in $V(H_1) \cup V(H_2)$.  
Indeed, if a vertex has two neighbours in one of the hexagons, then a cycle of length less than 6 arises.
If vertex is adjacent to a vertex $h_1$ in $H_1 \setminus \{v_1,v_2\}$ and a vertex $h_2$ in 
$H_2 \setminus \{v_1,v_2\}$, then the longer path from $h_1$ to $h_2$ in 
$G[V(H_1) \cup V(H_2)] \setminus \{v_1\}$ or in 
$G[V(H_1) \cup V(H_2)] \setminus \{v_2\}$ together with $v$ would make a chordless 
cycle of length more than 6. 
  
Now suppose to the contrary that $G \setminus \{v_1, v_2\}$ is connected.
Then, there is a path between $V(H_1) \setminus \{v_1, v_2\}$ and $V(H_2) \setminus \{v_1, v_2\}$. 
Let $p=h_1w_1w_2 \ldots w_kh_2$ be such a path of minimal length, where 
$h_1 \in V(H_1) \setminus \{v_1, v_2\}$ and $h_2 \in V(H_2) \setminus \{v_1, v_2\}$. 
The above discussion implies that $k \geq 2$.
Moreover, by minimality of $p$, none of the vertices $w_1, \ldots, w_k$ belongs to $V(H_1) \cup V(H_2)$;
for $i = 2, \ldots, k-1$, if $w_i$ has a neighbour in the hexagons, then this neighbour is either $v_1$ or $v_2$; and the path $p$ does not have chords.
%We can assume that this is a minimal such path which implies that $w_i \notin H_1 \cup H_2$ for all $i$,
%that $w_i$ has at most one neighbour in $H_1 \cup H_2$ and that for $2 \leq i \leq n-1$ the neighbour of 
%$w_i$ %must be in $\{v_1,v_2\}$. Furthermore, by minimality we can assume that the path is induced. 
%
Now, let us denote the vertices of $H_1$ by $v_1, v_2, \ldots, v_6$ and vertices of $H_2$ by $v_1, v_2, v'_3, v'_4, v'_5, v'_6$ with the edges $\{v_1v_2, v_2v_3, v_3v_4, v_4v_5, v_5v_6, v_6v_1, v_2v'_3, v'_3v'_4, v'_4v'_5, v'_5v'_6, v'_6v_1\}$. Then, 
note that $h_1 \notin \{ v_3, v_6 \}$ as otherwise $V(H_1) \cup \{w_1, v'_3, v'_6\}$ induce a $C_6^{+3c}$. 
Similarly, $h_2 \notin \{ v_3', v_6' \}$.
So without loss of generality we can assume $h_1=v_4$ and $h_2 \in \{v'_4, v'_5\}$. 
Then the paths connecting $h_1$ and $h_2$ in $G[V(H_1) \cup V(H_2)] \setminus \{v_1\}$
and in $G[V(H_1) \cup V(H_2)] \setminus \{v_2\}$ both have at least 5 vertices. Each of these paths together with the path $p$ form a cycle of length more than 6, and hence each of the cycles has a chord. Let
$v_1w_i$ be a chord in one of the cycles, and $v_2w_j$ be a chord in the other cycle, such that 
$i$ and $j$ are smallest possible. 
Then both $v_1v_6v_5v_4w_1w_2 \ldots w_i$ and $v_2v_3v_4w_1w_2 \ldots w_j$ are chordless cycles.
Since every chordless cycle in $G$ is a hexagon, we conclude that $i=2$ and $j=3$.
%Then $v_1v_6v_5v_4w_1w_2 \ldots w_i$ is a chordless cycle,
%which is of length 6 only if $i=2$. Similarly, $v_2v_3v_4w_1w_2 \ldots w_j$ is a chordless cycle which is of length 6 if and only if $j=3$. 
But then $v_1v_2w_2w_3$ induce a $C_4$. This contradiction finishes the proof. 
\end{proof}

%\begin{definition} 
Let $n \in \mathbb{N}$, and $k_i \in \mathbb{N}, k_i \geq 2$ for every $i=1, \ldots, n$, and
$d_1, \ldots, d_{n-1} \in \{1, -1 \}$.
Let $C^{i}_{6, k_i}$ be a graph isomorphic to $C_{6, k_i}$ with 
$V(C^{i}_{6,k_i})=\{a^i, b^i, a^i_j, b^i_j : 1 \leq j \leq k_i\}$ and
$E(C^{i}_{6,k_i})=\{a^ia^i_j, a^i_jb^i_j, b^i_jb^i :1 \leq j \leq k_i\}$. 
A \textit{hexagonal strip} $H(k_1, d_1, k_2, d_2, \ldots, k_{n-1}, d_{n-1}, k_n)$ is the graph obtained by gluing together $C^{1}_{6,k_1}, \ldots, C^{n}_{6,k_n}$ in such a way that the edge $b^i_2b^i$ is glued to $a^{i+1}a^{i+1}_1$ and the direction is described by $d_i$:
\begin{itemize}
	\item if $d_i=1$, then $b^i_2$ is identified with $a^{i+1}$ and $b^i$ is identified with $a^{i+1}_1$;
	\item if $d_i=-1$, then $b^i_2$ is identified with $a^{i+1}_1$ and $b^i$ is identified with $a^{i+1}$.
\end{itemize}
%\end{definition}

%\begin{figure}[h]
%\includegraphics[scale=0.7]{hexagon2connected.png}
%\caption{The graphs $C_{6,k}$ (left) and $H(k,1,k,-1,k,-1,k)$(right)}
%\label{hexagon2con}
%\end{figure}

\begin{figure}[h]
    	%\centering
    		
    	\begin{subfigure}[b]{0.22\textwidth}
       	\includegraphics[scale=0.42]{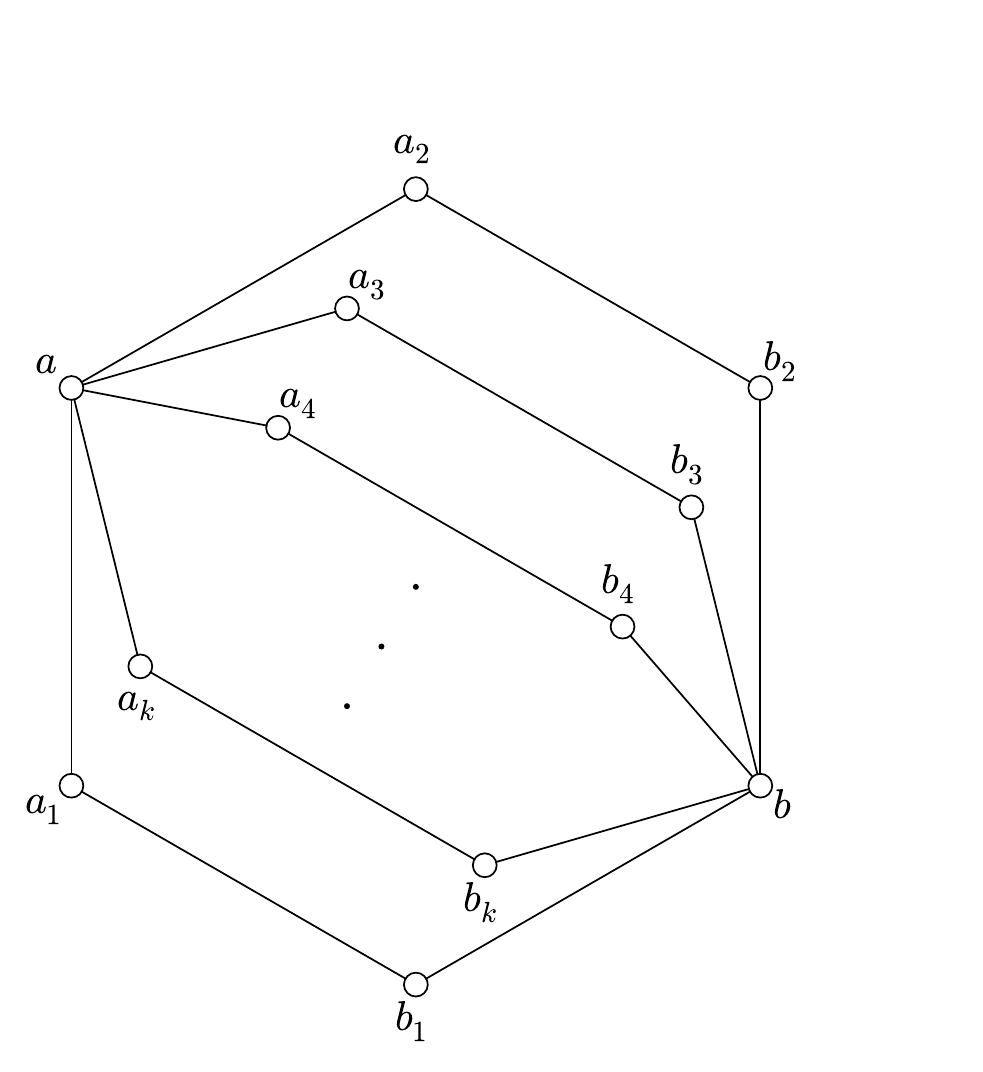}
       	\caption{The graph $C_{6,k}$}
		\label{fig:hexagon}
	\end{subfigure}
	~
	\begin{subfigure}[b]{0.78\textwidth}
              \includegraphics[scale=0.62]{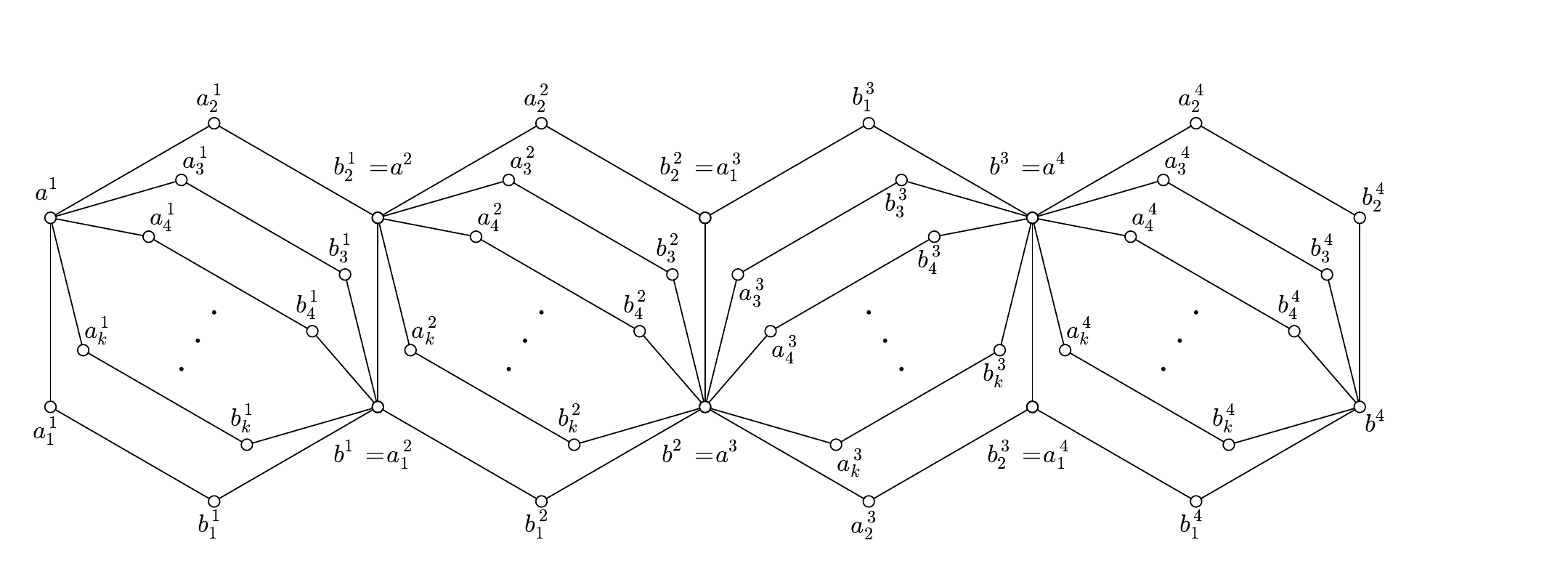}
              \caption{The graph $H(k,1,k,-1,k,-1,k)$}
		\label{fig:hexagonal_strip}
	\end{subfigure}
		
    	\caption{The graphs $C_{6,k}$ and $H(k,1,k,-1,k,-1,k)$}
    	\label{hexagon2con}
\end{figure}

\begin{lemma} \label{2con}
Let $G$ be a 2-connected graph in $\mathcal{X}$. Then $G$ is isomorphic to
$H(k_1, d_1, \ldots, k_{n-1}, d_{n-1}, k_n)$, for some $n \in \mathbb{N}$, and $k_i \in \mathbb{N}, k_i \geq 2, d_i \in \{1, -1 \}$, $i = 1, \ldots, n$.  
\end{lemma}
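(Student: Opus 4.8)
The plan is to prove the statement by induction on $|V(G)|$, peeling $G$ apart along a shared edge. In the inductive step, if no two hexagons of $G$ share an edge, then Lemma~\ref{noshared} immediately gives $G\cong C_{6,k}=H(k)$. Otherwise I would fix two hexagons sharing an edge $e=v_1v_2$; by Lemma~\ref{cutedge} the pair $\{v_1,v_2\}$ is a $2$-cut, and the idea is to split $G$ at it, apply the induction hypothesis to the two sides, and glue the resulting strips back together.

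First I would show that $G\setminus\{v_1,v_2\}$ has \emph{exactly} two components. There are at least two because $e$ is a cutset. If there were three, say $X_1,X_2,X_3$, then each $G[X_t\cup\{v_1,v_2\}]$ is again $2$-connected (as $v_1v_2\in E$), so the shortest cycle through $e$ inside it is chordless and hence a hexagon $v_1v_2p_tq_tr_ts_t$ with internal vertices lying in $X_t$. Since the three ``flaps'' lie in distinct components, their internal vertices are pairwise non-adjacent, and $\{v_2\}\cup\{p_t,q_t,r_t : t=1,2,3\}$ would induce $S_{3,3,3}$ (centre $v_2$, legs $v_2p_tq_tr_t$), contradicting $G\in\mathcal{X}$. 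Hence there are exactly two components $A,B$. I then set $G_A=G[A\cup\{v_1,v_2\}]$ and $G_B=G[B\cup\{v_1,v_2\}]$: each is $2$-connected, lies in $\mathcal{X}$, contains a hexagon through $e$, and is strictly smaller than $G$, so by the induction hypothesis each is a hexagonal strip, with $G=G_A\cup G_B$ and $V(G_A)\cap V(G_B)=\{v_1,v_2\}$, glued along $e$.

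The hard part will be showing that this gluing reassembles a single linear strip. Note first that $e$ is \emph{not} a cutset of $G_A$ (deleting $v_1,v_2$ leaves $A$ connected), so $e$ is none of the internal gluing edges of the strip $G_A$, which are cutsets. I then need to locate $e$, and I claim it must be a terminal ``hub--arm'' edge (a port $a^1a^1_1$ or $b^n_2b^n$) of $G_A$, and likewise of $G_B$. To rule out $e$ sitting on an arm-middle edge $a^i_jb^i_j$ of a block, I would use that $v_1$ and $v_2$ retain neighbours in $B$ (the ends of the hexagon $H_B\subseteq G_B$): combined with a further arm at the incident hub this produces $C_6^{+3c}$, while extending one step into $B$ produces $C_6^{+2l2}$, both forbidden; and when the block in question is a single hexagon the choice of its two hubs is free, so $e$ can simply be relabelled as a hub--arm edge. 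An analogous use of the forbidden graphs rules out $e$ lying in the interior of the strip on its non-port side. Once $e$ is identified as a port of each side, the two ways of matching $\{v_1,v_2\}$ to (hub, arm-end) across the two sides give the direction $d\in\{1,-1\}$, and the concatenation is precisely $H(k^A_1,\dots,k^A_p,d,k^B_1,\dots,k^B_q)$.

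I expect this positioning-and-matching step — converting ``each side is a strip sharing $e$'' into ``the union is one linear strip'' — to be the genuine obstacle, since it requires systematically excluding, via $S_{3,3,3}$, $C_6^{+3c}$, $C_6^{+3nc}$ and $C_6^{+2l2}$, every configuration in which $e$ could fail to be a terminal hub--arm edge, and then checking that the two admissible hub/arm matchings correspond exactly to the two values of the direction parameter $d$ in the definition of a hexagonal strip.
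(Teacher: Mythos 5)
Your proposal is correct and follows essentially the same route as the paper's proof: induct, invoke Lemma~\ref{noshared} when no two hexagons share an edge, otherwise split at a shared edge (a cutset by Lemma~\ref{cutedge}), apply the induction hypothesis to the resulting 2-connected pieces, and exclude improper regluings via the forbidden subgraphs $S_{3,3,3}$, $C_6^{+3c}$ and $C_6^{+2l2}$. If anything, your write-up is more explicit than the paper's, which settles both the ``exactly two components'' claim and the port-matching step with a single appeal to $C_6^{+2l2}$.
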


\begin{proof}
If two hexagons intersect at an edge then we call such an edge \textit{shared}. We prove the statement by induction on the number of shared edges. If there are no shared edges, then the conclusion follows from Lemma~\ref{noshared}. So suppose there is a shared edge $v_1v_2$. By the Lemma~\ref{cutedge} we know that such an edge is a cutset. Let $C'_1, C'_2, \ldots, C'_k$ be the components of 
$G \setminus \{v_1,v_2\}$, and let $C_i=G[V(C_i') \cup \{v_1,v_2\}]$. 
It is easy to see that each of $C_1, C_2, \ldots, C_k$ is 2-connected and has less shared edges
than $G$. Hence, by induction, each of this graphs is a hexagonal strip. Further, we conclude that 
$k=2$ and $C_1$ and $C_2$ are properly glued along $v_1v_2$ to form a hexagonal strip, 
because otherwise an induced copy of forbidden $C^{+2l2}$ would arise.

%We know that $C_i$ is 2-connected and has less shared edges than $G$, so by induction we know 
%that it is isomorphic to some hexagonal strip.   
\end{proof}

\begin{lemma} \label{hexagonalcaterpillar}
Let $G$ be a graph in $\mathcal{X}$ and $H$ be a 2-connected component of $G$. Then $H$ is isomorphic to some hexagonal strip $H(k_1, d_1, k_2, d_2, \ldots, k_{n-1}, d_{n-1}, k_n)$ and vertices
$a^i, b^i, a^i_1, b^i_2$ can only be additionally adjacent to some pendant vertices of $G$, with exception 
of one of $\{a^1, a^1_1\}$ and $\{b^n, b^n_2\}$. Further, all the other vertices of $G$ do not have any 
more neighbours in $G \setminus V(H)$.
\end{lemma}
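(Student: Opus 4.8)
The plan is to first fix the coarse shape of $H$ and then to control the edges joining $V(H)$ to $V(G)\setminus V(H)$. By Lemma~\ref{2con} we already have $H\cong H(k_1,d_1,\ldots,k_n)$, so only the external edges remain. The starting point is maximality of the $2$-connected component: if a vertex $u\notin V(H)$ had two neighbours in $H$, then $H$ together with $u$ would still be $2$-connected, contradicting maximality of $H$. Hence every vertex outside $H$ has at most one neighbour in $H$, so $G\setminus V(H)$ hangs off $H$ only through single edges. Call a vertex of $H$ \emph{loaded} if it has a neighbour outside $H$.

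The core step is to show that every loaded vertex lies in $\{a^i,b^i,a^i_1,b^i_2\}$. The main device is that inside a bundle a pole behaves as a pendant: in a hexagon $a^i-a^i_j-b^i_j-b^i-b^i_{j'}-a^i_{j'}$ a further rim path gives a pole a neighbour that is pendant in the induced picture, and a pole lying on a shared edge likewise has such a pseudo-pendant reaching into the neighbouring bundle, its second neighbour being chosen outside the hexagon so that no chord appears. Given a loaded rim vertex $r$ that is not a gluing vertex, I would run a hexagon through $r$ and through the glued rim path, so that two further hexagon vertices near $r$ are rich poles; attaching pseudo-pendants there, together with the external pendant at $r$, produces three independent pendants. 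These form an induced $C_6^{+3nc}$ when their carriers are pairwise non-consecutive on the hexagon and an induced $C_6^{+3c}$ when they are consecutive, either of which is forbidden. The same idea, applied to two rim vertices on one side of a single bundle, shows that at most one rim vertex per side is loaded (two external pendants on one side, plus a pole pseudo-pendant, again give $C_6^{+3nc}$; and two such pendants cannot be adjacent, since they would close a forbidden $C_5$). For an internal bundle this forces the loaded rim vertices to be the glued ones, while at an end bundle, where one side is free, relabelling the symmetric free rim paths lets me name the surviving loaded rim vertex $a^i_1$ or $b^i_2$.

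It then remains to show that, away from the two ends, a loaded vertex from $\{a^i,b^i,a^i_1,b^i_2\}$ is adjacent outside $H$ only to pendant vertices of $G$. If an internal such vertex $v$ had an external neighbour $u$ that is not pendant, I take a second neighbour $u'$ of $u$, giving a length-$2$ path $v-u-u'$ leaving $H$. Since $v$ is internal it lies on a shared edge, and I would pick a hexagon through $v$ whose consecutive vertex is the opposite end of that shared edge; being glued, this vertex carries a length-$2$ path into the neighbouring bundle, so two length-$2$ paths sit at consecutive vertices of a hexagon, an induced $C_6^{+2l2}$, a contradiction. This argument needs a neighbouring bundle on the relevant side and therefore breaks only at the two ends of the strip, which is exactly why one vertex of each of $\{a^1,a^1_1\}$ and $\{b^n,b^n_2\}$ is exempted; through these the remainder of $G$ is attached. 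Combining the two steps, no vertex of $H$ outside $\{a^i,b^i,a^i_1,b^i_2\}$ is loaded, and the loaded ones carry only pendants except at the two ends.

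I expect the real difficulty to be the bookkeeping in the degenerate and boundary cases rather than the principle above. When $k_i=2$ a bundle is a single hexagon whose poles have no spare rim path, so both pseudo-pendants must be imported from the neighbouring bundles, and the two directions $d_{i-1},d_i$ must be treated separately to decide which endpoint of a shared edge is a pole and which a rim vertex; a direct check on the small case $H(2,1,2)$ shows that the correct obstruction there is $C_6^{+3c}$ rather than $C_6^{+3nc}$. At an end bundle one source of pseudo-pendants disappears, and one must verify that each surviving configuration either still forces a forbidden subgraph (occasionally via $C_6^{+3c}$ or $S_{3,3,3}$) or legitimately falls under the end exception. The recurring subtlety throughout is keeping the exhibited subgraphs induced: a pseudo-path that reconverges at a pole introduces a chord and must be rerouted into a different bundle, which is exactly why the argument leans on the shared-edge structure established in Lemmas~\ref{hex}, \ref{noshared} and \ref{cutedge}.
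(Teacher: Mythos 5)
Your core step fails, and it fails at exactly the point that carries the whole lemma. You want to rule out a loaded rim vertex $r$ that is not a gluing vertex by running a hexagon through $r$, attaching pseudo-pendants at ``two further hexagon vertices near $r$'', which you take to be the two poles, and then invoking the dichotomy: $C_6^{+3nc}$ if the three carriers are pairwise non-consecutive, $C_6^{+3c}$ if they are consecutive. But on any hexagon $a^i\!-\!a^i_k\!-\!b^i_k\!-\!b^i\!-\!b^i_j\!-\!a^i_j$ of a bundle, the two poles $a^i$ and $b^i$ sit at diagonally opposite positions (distance $3$ on the cycle), so together with $r$ the three carriers occupy positions of the form $\{1,4,x\}$ with $x\in\{2,3,5,6\}$. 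Such a triple is never three consecutive positions and never an alternating triple, so neither branch of your dichotomy can ever occur. Worse, a hexagon with three pendants at positions $\{1,4,x\}$ is not forbidden at all: after relabelling the rim pairs it is exactly a hexagonal caterpillar with pendants at $a^1$, $b^1$ and one of $a^1_1,b^1_2$, hence a member of $\mathcal{X}$ by Theorem \ref{th:Xstruct}. So your configuration yields no contradiction, and the central assertion --- that a non-gluing rim vertex of an internal bundle cannot carry even a single external pendant --- is left unproved.

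The paper's obstruction for this step is $C_6^{+2l2}$, not a three-pendant graph, and that is the idea you are missing. If $b^i_k$ ($k\neq 2$, $i\neq n$) has an external neighbour $w$, one does not use the hexagon through $b^i_k$; one uses the hexagon of the \emph{neighbouring} bundle $i+1$ and observes that $b^i\!-\!b^i_k\!-\!w$ is a length-$2$ path hanging at the shared-edge vertex $b^i\in\{a^{i+1},a^{i+1}_1\}$, while $b^i_2\!-\!a^i_2\!-\!a^i$ is a length-$2$ path hanging at the adjacent shared-edge vertex $b^i_2$; two length-$2$ paths at consecutive vertices of a hexagon give an induced $C_6^{+2l2}$, so not even one pendant is tolerated there. (Inducedness is immediate because, by maximality of $H$, the entire connected component of $G\setminus V(H)$ containing $w$ attaches to $H$ at the single vertex $b^i_k$; this component-level consequence of maximality, which the paper states and uses, is stronger than your per-vertex claim and is what keeps every exhibited subgraph induced --- with only your per-vertex claim, the second vertex $u'$ of an external path could still have a neighbour elsewhere in $H$.) A three-pendant argument could be salvaged only by replacing the far pole with the gluing rim vertex $b^i_2$ as a carrier and importing its pseudo-pendant from bundle $i+1$, landing on positions $\{1,3,5\}$; but that is not what you wrote, and it still hinges on reaching into the neighbouring bundle, which your proposal never does in this step.
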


\begin{proof}
The structure of the $H$ follows from Lemma~\ref{2con}, so we only need to argue about the adjacencies
between the vertices in $V(G) \backslash V(H)$ and the vertices in $V(H)$. Consider a connected component $C$ of $G \setminus V(H)$. As $H$ is maximal 2-connected subgraph, the vertices of $C$ can only be adjacent to at most one vertex of $H$. If $C$ has some vertices adjacent to $v \in V(H)$, we will refer to $C$ as a \textit{$v$-component}. Since $G$ is $C_3$-free, we have that every $v$-component of size at least 2, 
must have two vertices $u, w$ such that  $uw, wv \in E(G)$, $u$ is non-adjacent to any vertex of $H$, 
and $w$ is non-adjacent to any vertex of $H$ other than $v$.

Let us first consider a $v$-component for $v \in \{a^i, b^i, a^i_1, b^i_2: 1 \leq i \leq n\} \backslash \{a^1, a^1_1, b^n, b^n_2\}$. Suppose the component has size at least 2, hence, by the above argument, the $v$-component has two vertices $u,w \in V(G) \backslash V(H)$ such that $uw, wv \in E(G)$. But then $u, w$ and the two hexagons of $H$, which share an edge containing $v$, form a subgraph containing an induced $C_6^{+2l2}$. 
We conclude that any $v \in \{a^i, b^i, a^i_1, b^i_2: 1 \leq i \leq n\} \backslash \{a^1, a^1_1, b^n, b^n_2\}$ is adjacent to pendant vertices of $G$ only. 

Now, consider a vertex $b^i_k$ for any $i \neq n$ and $k \neq 2$. Suppose to the contrary, 
that there is a vertex $w \in V(G) \backslash V(H)$ which is adjacent to $b^i_k$.  Then, $w, b^i_k, a_2^i, a^i$ together with $a^{i+1},a^{i+1}_1, a^{i+1}_2, b^{i+1}$, $b^{i+1}_1, b^{i+1}_2$ induce a $C^{+2l2}_6$. Hence, vertices $b^i_k$ for any $i \neq n$ and $k \neq 2$, have no neighbours outside $H$. 
Similarly, one can deduce that $a^i_k$ has no neighbours for any $i \neq 1$, $k \neq 1$. 

We are left to argue about adjacencies of the vertices $a^1, a^1_i$ and $b^n, b^n_i$. Consider the case when $n>1$. Notice that if $a^1_i$ and $a^1_j$ each have a neighbour outside $H$, for some $i \neq j$, then taking the two neighbours together with hexagon $G[a^1, a^1_i, a^1_j, b^1_i, b^1_j, b_1]$, and together with a neighbour of $b^1$ either $b^2_1$ or $a^2_2$ (depending on wether $a_1^2$ or $a_2$ gets identified with $b^1$, respectively), we get an induced $C_6^{+3nc}$. This contradiction proves that only one of $a^1_i$ may have a neighbour outside $H$. Moreover, $a^1_2$ does not have a neighbour outside $H$, as 
otherwise an induced $C_6^{+3c}$ would arise. Therefore, without loss of generality we can assume that if $a^1_i$ has a neighbour outside $H$, then $i=1$. It is clear that if there is an $a^1$-component and an $a^1_1$-component which both have sizes at least 2, then we have an induced $C_6^{+2l2}$. 
The analogous arguments holds for $b^n, b^n_i$. This finishes the proof for $n>1$. 
The case $n=1$ can be shown to hold by similar analysis.

\end{proof}

%\begin{definition}
Let $G$ be a graph consisting of a hexagonal strip $H(k_1, d_1, k_2, d_2, \ldots, k_{n-1}, d_{n-1}, k_n)$ together with some pendant vertices attached to $a^i, b^i, a^i_1, b^i_2$ and with some radius 2 trees attached to a vertex $a \in \{a^1, a^1_1\}$ and a vertex $b \in \{b^n, b^n_2\}$. Then we call $G$ a 
\textit{hexagonal caterpillar with gluing vertices} $a$ and $b$.
%\end{definition}
%\begin{definition}
Further let $H_1, H_2, \ldots, H_k$ be a set of vertex disjoint hexagonal caterpillars or lobsters,
with $a_i$ and $b_i$ being gluing vertices of $H_i$, for $i=1, \ldots, k$. 
Then the \textit{generalized hexagonal caterpillar} $(H_1, b_1, a_2, H_2, b_2, a_3, H_3, \ldots, b_{k-1}, a_k, H_k)$ is the graph obtained from $H_1, H_2, \ldots, H_k$ by identifying pairs of vertices $b_i$ and $a_{i+1}$ for every $i = 1, \ldots, k-1$. 
%\end{definition}

This description gives us a universal structure for the graphs in $\mathcal{X}$. One can deduce this by noting
that any graph in $\mathcal{X}$ should consist of 2-connected components provided by Lemma~\ref{hexagonalcaterpillar} and lobsters glued together, and that the generalized hexagonal caterpillars described above are the most general graphs we can obtain with this gluing without forming $S_{3,3,3}$. 
We state this as the main result of this section.
 
%Observe that, for any graph $G \in \mathcal{X}$, the structure of $2$-connected components is provided by
% Lemma~\ref{hexagonalcaterpillar}. With observation that $G$ is $S_{3,3,3}$-free and induction on the number 
%of $2$-connected components for graphs $G \in \mathcal{X}$, it is not hard to conclude the following, which is the 
%main result of this section:

\begin{theorem}\label{th:Xstruct}
	Generalized hexagonal caterpillars are universal graphs for the class $\mathcal{X}$, 
	that is each such graph belongs to $\mathcal{X}$ and every graph $G \in \mathcal{X}$ is an 
	induced subgraph of some generalized hexagonal caterpillar.
\end{theorem}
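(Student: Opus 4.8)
The plan is to prove the theorem in two directions: first that every generalized hexagonal caterpillar belongs to $\mathcal{X}$, and second that every graph $G \in \mathcal{X}$ embeds as an induced subgraph of some generalized hexagonal caterpillar. For the forward direction, I would argue that a generalized hexagonal caterpillar contains no forbidden subgraph from the list defining $\mathcal{X}$. Since the building blocks are hexagonal strips (whose only chordless cycles are hexagons, by the structural analysis leading to Lemma~\ref{2con}) glued at single gluing vertices together with lobster pieces and pendant/radius-2 tree attachments, I would verify that (i) no chordless cycle of length $3,4,5,7,8,\ldots$ can be created, since gluing at a single cut vertex cannot merge two hexagons into a longer cycle and cannot create short cycles, and (ii) no induced $S_{3,3,3}$, $C_6^{+3c}$, $C_6^{+3nc}$, or $C_6^{+2l2}$ arises. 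The last point is where the restrictions on attachments in Lemma~\ref{hexagonalcaterpillar} are used in reverse: because pendants may only hang off $a^i, b^i, a^i_1, b^i_2$ and radius-2 trees only off the two designated gluing vertices, the local configurations that would produce $C_6^{+3c}$, $C_6^{+3nc}$, or $C_6^{+2l2}$ are precisely excluded, and the absence of long induced paths branching three ways rules out $S_{3,3,3}$.

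For the harder reverse direction, I would take an arbitrary $G \in \mathcal{X}$ and decompose it into its 2-connected components and its tree-like (forest) parts. By Lemma~\ref{hexagonalcaterpillar}, each 2-connected component $H$ is a hexagonal strip, and the only vertices of $H$ that can have neighbours outside $H$ are the $a^i, b^i, a^i_1, b^i_2$ (pendants only) together with the two extremal gluing vertices in $\{a^1,a^1_1\}$ and $\{b^n,b^n_2\}$ (which may carry radius-2 trees). I would then show that attaching the permitted pendants and radius-2 trees to a hexagonal strip yields exactly a hexagonal caterpillar, and that the forest components not meeting any hexagon are $S_{3,3,3}$-free and hence lobsters. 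The core combinatorial task is to understand how these pieces are glued to each other in $G$: I would argue that two 2-connected components, or a 2-connected component and a lobster, can only meet at a single shared vertex (otherwise a new chordless cycle or a forbidden subgraph would appear), and that this shared vertex must be a gluing vertex of each piece. This is exactly the identification operation defining a generalized hexagonal caterpillar.

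The main obstacle, as the authors themselves hint with the phrase ``it is not hard to convince oneself,'' is making rigorous the claim that the gluing pattern of $G$ is a \emph{path-like} (caterpillar) arrangement of pieces rather than an arbitrary tree of pieces, and that no piece receives more than two ``external'' gluing attachments. The key structural fact to establish is that if a single vertex $v$ served as a gluing point for three distinct nontrivial branches, or if gluing vertices proliferated along a piece, then an induced $S_{3,3,3}$ would be forced; conversely one must check that the two-endpoint ``gluing vertices'' structure of each lobster and each hexagonal caterpillar limits each piece to at most two outgoing identifications. I would formalize this by considering the ``block tree'' of $G$ (the tree whose nodes are 2-connected components and cut vertices) and showing that $S_{3,3,3}$-freeness forces this tree, after contracting trivial parts, to be a path, with each node contributing a hexagonal caterpillar or lobster and each edge corresponding to a single identified gluing vertex.

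Finally, I would note that the embedding need not be onto: a given $G$ may be an induced subgraph of a generalized hexagonal caterpillar in which some $k_i$ are made larger, or some optional pendant/tree attachments are present in the universal graph but absent in $G$. This is why the theorem states ``induced subgraph of some generalized hexagonal caterpillar'' rather than ``equals.'' The verification that deleting the extra vertices recovers $G$ as an induced subgraph is routine once the gluing structure is identified, so the weight of the proof rests on the decomposition-and-reassembly argument above together with the already-established Lemmas~\ref{hex}--\ref{hexagonalcaterpillar}.
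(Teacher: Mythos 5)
Your proposal follows essentially the same route as the paper: the paper states Theorem~\ref{th:Xstruct} with no formal proof at all, justifying it only by the preceding remark that any graph in $\mathcal{X}$ consists of the 2-connected components described by Lemma~\ref{hexagonalcaterpillar} together with lobsters, glued so that no $S_{3,3,3}$ (or other forbidden subgraph) arises --- which is exactly your decomposition-and-reassembly argument resting on Lemmas~\ref{hex}--\ref{hexagonalcaterpillar}. Your explicit treatment of the forward direction and your block-tree formalization of the ``gluing pattern must be path-like'' step supply rigour that the paper leaves implicit, but they flesh out the same approach rather than constituting a different one.
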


In further sections we will use the structural characterization of graphs in $\mathcal{X}$ to show that for
every $G \in \mathcal{X}$ both $G^*$ and $\overline{G}$ are UDGs. First, by Theorem \ref{th:Xstruct}
it is enough to prove the result only for generalized hexagonal caterpillars.
Further, without loss of generality we can restrict our consideration to those graphs in $\mathcal{X}$ in 
which no vertex is adjacent to more than one pendant vertex. Indeed, assume a graph $G \in \mathcal{X}$
has a vertex with two pendant neighbours $a$ and $b$. Then $a$ and $b$ belong to the same part in $G$,
and therefore to the same part in both $G^*$ and $\overline{G}$, in particular $a$ and $b$ are adjacent in
these graphs. Moreover, in each of the graphs $N(a) \setminus \{ b \} = N(b) \setminus \{ a \}$. 
This implies that if we have a UDG-representation $f$ for $H \setminus \{b\}$, where $H$ is one of 
$G^*$ and $\overline{G}$, then an extension $f'$ of $f$ to $V(H)$ with $f'(b) = f(a)$ is the 
UDG-representation for $H$. 
Therefore, from now on when we refer to a graph in $\mathcal{X}$ we mean a generalized hexagonal
caterpillar which is constructed from hexagonal caterpillars or lobsters whose vertices have at most 
one pendant neighbour (see Figure \ref{fig:HexLobsterWithGluing}).

\begin{figure}[h]
    	%\centering
    		
    	\begin{subfigure}[b]{\textwidth}
       	\includegraphics[scale=0.8]{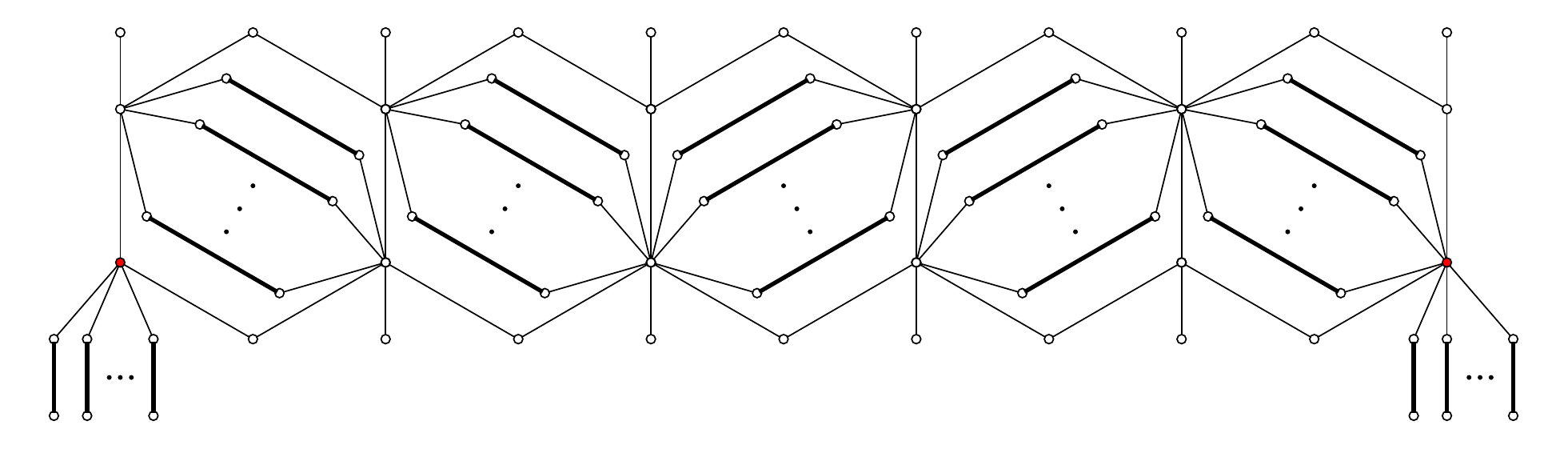}
       	\caption{Hexagonal caterpillar with gluing vertices (filled vertices)}
		\label{fig:hexCat}
	\end{subfigure}
	
	\vskip0.8cm
	
	\begin{subfigure}[b]{\textwidth}
		\centering
       	\includegraphics[scale=0.55]{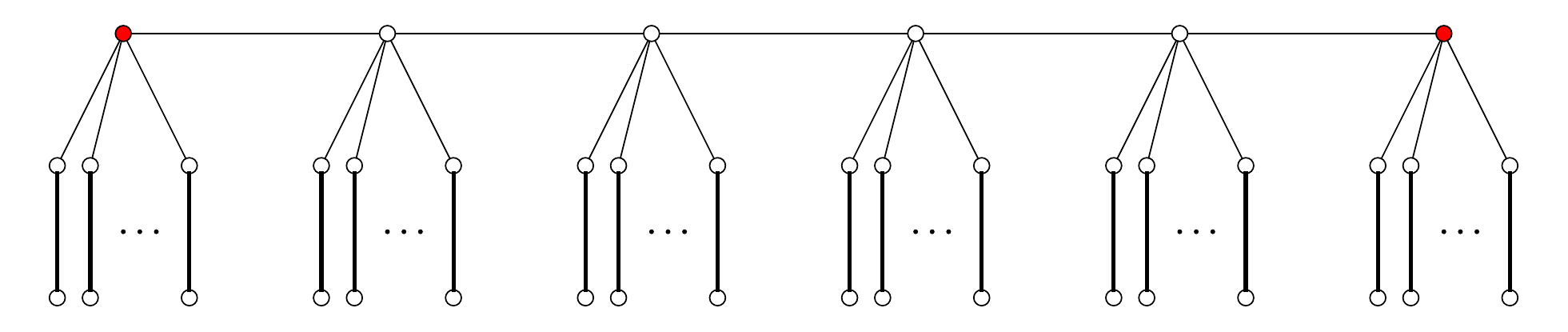}
              \caption{Lobsters with gluing vertices (filled vertices)}
		\label{fig:lobster}
	\end{subfigure}
		
    	\caption{}
    	\label{fig:HexLobsterWithGluing}
\end{figure}

%%%%%%%%%%%%%%%%                  C_4*-FREE CATERPILLAR       %%%%%%%%%%%%%%%%%%

%%%%%%%%%%%%%%%%%%%%%%%%%%%%%%%%%%%%
%%%%%%%%%%%%%%%%%%%%%%%%%%%%%%%%%%%%
\subsection{$C_4^*$-free co-bipartite unit disk graphs}\label{ss:C4rep}
%%%%%%%%%%%%%%%%%%%%%%%%%%%%%%%%%%%%
%%%%%%%%%%%%%%%%%%%%%%%%%%%%%%%%%%%%

In this section we show that for a graph $G \in \mathcal{X}$ the graph $G^*$ is UDG. 
We do this in two steps. 
First, we represent basic graphs in $\mathcal{X}^*$ and then show how representation of a general
graph in $\mathcal{X}^*$ can be obtained from a representation of a basic graph. To explain
this formally we introduce some definitions.

Let $G$ be a bipartite or co-bipartite graph with parts $U$ and $W$, and let $uw$ be an edge of 
$G$ with $u \in U$ and $w \in W$. An edge $u'w'$ of $G$ with $u' \in U$ and  $w' \in W$ is a 
\textit{twin} of $uw$ if
$N_{W}(u) \triangle N_W(u') = \{ w, w' \}$ and 
$N_{U}(w) \triangle N_U(w') = \{ u, u' \}$, where $P \triangle Q$ is the symmetric difference of 
sets $P$ and $Q$.
In this case we also say that the vertex $u'$ is a \textit{twin} of the vertex $u$ and the vertex $w'$ 
is a twin of the vertex $w$.
Notice that the relation of being twins is symmetric and transitive. 
The graph $G$ is \textit{basic} if it does not contain twin edges.
%Let $G=(A,B,E)$ be a co-bipartite graph and $(a,b) \in E(G)$ with $a \in A$ and $b \in B$. 
The operation of \textit{duplication} of the edge $uw$ is to add one or more new edges to 
$G$ each of which is a twin of $uw$. Note that $uw$ and $u'w'$ are twins in $G$ if and only if
they are twins in $G^*$.
Each of the thick edges in Figures \ref{fig:hexCat} and \ref{fig:lobster} is called \textit{parallel edge} of
hexagonal caterpillar or lobster, respectively. 
Let $H$ be a generalized hexagonal caterpillar obtained from $H_1, \ldots, H_k$, then an edge of $H$
is called \textit{parallel}, if it is parallel edge of one of the graphs $H_1, \ldots, H_k$. 
Similarly, an edge of $H^*$ is parallel, if it is parallel edge in $H$. 
%We say that an edge of $G$ is \textit{prime} if it is either a diagonal edge of a hexagon in $H$, 
%or a pendant edge of  a hexagonal caterpillar in $H$, or if it is a pendant edge of a caterpillar in $H$. 
%In other words, prime edges in $G$ are those edges, that may have twins.
It follows from the results of Section \ref{ss:structure} that a generalized hexagonal caterpillar is 
either basic or can be obtained from a basic one by duplicating some of  its parallel edges.
In Section \ref{sss:basic} we show how to represent graphs in $\mathcal{X}^*$ corresponding to
basic generalized hexagonal caterpillars,
% (that is without twin edges) $C_4^*$-free co-bipartite graphs. 
and in Section \ref{sss:general} we extend this representation to the case of arbitrary 
generalized hexagonal caterpillars. 

%It follows from the results of Section \ref{ss:structure} that a graph in $\mathcal{X}^*$ is 
%either basic or can be obtained from a basic graph in $\mathcal{X}^*$ by duplicating some of 
%its parallel edges.
%In Section \ref{sss:basic} we show how to represent basic graphs in $\mathcal{X}^*$,
%% (that is without twin edges) $C_4^*$-free co-bipartite graphs. 
%and in Section \ref{sss:general} we extend this representation to the case of general 
%graphs in the class. 
%This will imply $\mathcal{Y} = \mathcal{X}^*$.

%%%%%%%%%%%%%%%%%%%%%%%%%%%%%%%%%%%%
%%%%%%%%%%%%%%%%%%%%%%%%%%%%%%%%%%%%
% Representation of basic $C_4^*$-free co-bipartite
%%%%%%%%%%%%%%%%%%%%%%%%%%%%%%%%%%%%
%%%%%%%%%%%%%%%%%%%%%%%%%%%%%%%%%%%%
\subsubsection{Representation of basic graphs}\label{sss:basic}
%%%%%%%%%%%%%%%%%%%%%%%%%%%%%%%%%%%%
%%%%%%%%%%%%%%%%%%%%%%%%%%%%%%%%%%%%

\begin{theorem} \label{basiclobster}
%For every basic $C_6$-free graph $G \in \mathcal{X}$, i.e. for every basic lobster $G$, $G^*$ is UDG. 
Let $G$ be a basic lobster in $\mathcal{X}$. Then $G^*$ is UDG.
\end{theorem}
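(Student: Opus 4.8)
The plan is to build an explicit UDG-representation of $G^*$ by hand, placing the clique $U$ as a tight cluster around one point and the clique $W$ as a tight cluster around a point at distance roughly $1$ from it, and then encoding the (sparse) tree of cross-edges by fine positioning inside the two clusters. Throughout, the two requirements pull against each other: each part must fit inside a region of diameter at most $1$ (so that it is a clique), while every non-adjacent cross-pair must be separated to distance strictly above $1$. The elementary estimates (\ref{eq1})--(\ref{eq4}) together with the law of cosines (\ref{eq5}) are exactly the tools needed to keep these competing distance bounds under control.

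First I would exploit the structure of a basic lobster. Since two length-$2$ legs hanging from the same spine vertex would form a pair of twin edges, in a basic lobster every spine vertex carries at most one length-$2$ leg, and by the standing reduction at most one pendant leaf; thus $G$ is a spine path whose bipartition $U \cup W$ alternates along it, decorated by a controlled number of short legs. I would lay out the spine on two gently curving, facing circular arcs of a large radius $r$ --- the $U$-spine vertices on one arc and the $W$-spine vertices on the other --- parametrised by angle so that the chord between consecutive spine vertices (a cross-edge) has length at most $1$ while the chord between non-consecutive spine vertices of opposite parts exceeds $1$. On a single arc the chord length $2r\sin(\Delta\alpha/2)$ is monotone in the angular gap, so the within-part distances are all at most $1$ provided the total angular spread of each arc is small; this realises the caterpillar backbone as a threshold condition along the arcs, exactly as in the representation of $P_s^*$ in Figure~\ref{fig:simple_lobster_star_repr}.

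Next I would attach the legs. A pendant leaf of a spine vertex $v$ is placed as a twin-like copy of $v$'s neighbour, nudged just inside the unit-distance ball of $v$ and just outside the balls of all other opposite-part vertices. The length-$2$ leg $v - x - y$ is the genuinely two-dimensional part: $x$ must sit within distance $1$ of $v$ (and of the rest of its own clique) but beyond distance $1$ of every other spine vertex, and $y$ must lie within distance $1$ of $x$ only among the opposite part. I would realise this by displacing $x$ and $y$ slightly off the arcs, toward each other, by an amount small enough to preserve every clique distance yet large enough to create exactly the two needed cross-edges. Each placement is verified by writing the relevant distance with the law of cosines (\ref{eq5}) and bounding the resulting expression using (\ref{eq1})--(\ref{eq4}), choosing $r$ large and the perturbations correspondingly small.

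The main obstacle is this simultaneous three-way squeeze at the ends of the legs and at the ends of the spine, where a vertex must be close to its one or two tree-neighbours, close to its entire clique, and far from every remaining opposite-part vertex all at once. Controlling the forbidden cross-distances is the delicate point, since pushing a leg vertex inward to create its needed edge risks dropping some unwanted distance below $1$; the quantitative inequalities (\ref{eq1})--(\ref{eq5}) are used precisely to show that, for $r$ large enough, a window of admissible perturbations always remains. Once the spine and all legs are placed consistently, the resulting point set is a representation of $G^*$, which proves the theorem; this supplies the sufficiency direction complementing the necessity given by Lemma~\ref{lem:star-eat}, and it is the base case from which the representation of a general graph in $\mathcal{X}^*$ (Theorem~\ref{th:Xstruct}) will be assembled.
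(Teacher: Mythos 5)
Your structural observations are sound: in a basic lobster two length-$2$ legs hanging from the same spine vertex would indeed be twin edges, so each spine vertex carries at most one such leg, and your global architecture --- each clique inside a region of diameter at most $1$, the two regions roughly distance $1$ apart, the spine laid out along them with threshold spacing so that only consecutive spine vertices are within unit distance --- is the same as the paper's. The paper realizes it with four parallel lines and $n$ evenly spaced perpendicular lines (a flat grid) rather than circular arcs, which turns all verifications into explicit finite computations (Appendix \ref{app:T15}).

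The genuine gap is the placement of the length-$2$ legs: your displacement direction is wrong, and this is not a matter of choosing constants. Near a spine vertex $v$ your two arcs are, to first order, parallel lines at distance $D$, with consecutive spine vertices at along-line offset $\mu$; since the spine edges must be realized, $\mu^2 + D^2 \le 1$. Consider a leg $v$--$x$--$y$, so that $y$ lies in $v$'s part and its \emph{only} neighbour in the opposite part is $x$; in particular $y$ is non-adjacent to $v$'s spine neighbours $w'$, $w''$, which sit at along-line offset $\mu$ from $v$ on the opposite line. If $y$ is placed at (or near) $v$'s along-line position anywhere between the two lines --- on the $U$-arc, or displaced off it \emph{toward} the $W$-side as you propose --- then $\dist(y,w')^2 \le \mu^2 + D^2 \le 1$, creating a forbidden edge. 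No choice of large radius $r$ or small perturbation rescues this, because the offending inequality is precisely the condition that the spine edges $vw'$, $vw''$ exist; the claimed ``window of admissible perturbations'' is empty for the leg end. The correct move is the opposite one, and it is the heart of the paper's construction: both leg vertices are pushed \emph{outward}, away from the opposite part, onto two additional outer lines, with $b_i$ on the outer line $L_1$ of its own part's strip, $r_i$ on the outer line $L_4$ of its part's strip, and the widths calibrated exactly: $\dist(L_2,L_3)=\sqrt{1-\mu^2}$ for the spine and $\dist(L_1,L_4)=1$, so that the terminal leg edge $b_ir_i$ is realized as a full-width segment of length exactly $1$ while every cross pair with nonzero horizontal offset has distance strictly greater than $1$. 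Once the legs point outward with this exact-width calibration, what remains is the case-by-case distance check that your sketch defers, which is what the paper carries out in Appendix \ref{app:T15}.
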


\begin{proof} 
We will show how to obtain UDG-representation $f$ of $G^*$ for the lobster $G$ with the vertex set $V(G)=\{g_i, b_i, r_i : 1 \leq i \leq n\}$ and edge set $E(G)=\{g_ig_{i+1}: 1 \leq i \leq n-1\} \cup \{g_ib_i, b_ir_i : 1\leq i \leq n\}$. We will refer to the vertices $\GG=\{g_i: 1 \leq i \leq n\}$, $\BB=\{b_i: 1 \leq i \leq n\}$ and $\RR=\{r_i :  1 \leq i \leq n \}$ and to their images in the plane as to green, blue and red, respectively (see Figure~\ref{c4freelobster}, for the visualization of the proof). Let us denote the parts of bipartition of $G$ by $C_1=\{b_i, g_{j}, r_{k}: 1 \leq i, j, k \leq n, i $ -- odd$, j, k $ -- even$\}$ and $C_2=\{b_i, g_{j}, r_{k}: 1 \leq i, j, k \leq n, i $ -- even$, j, k $ -- odd$\}$. Finally, denote by $\GG_1, \BB_1, \RR_1$ and $\GG_2, \BB_2, \RR_2$, the green,
blue and red vertices belonging to parts $C_1$ and $C_2$, respectively.

To put the points on the plane, we first fix some $\mm \in \left(0, \frac{1}{n}\right)$ and draw parallel lines $L_1, L_2, L_3, L_4$ such that $L_2$ and $L_3$ are
between $L_1$ and $L_4$ and $\dist(L_1, L_4)=1$, $\dist(L_2, L_3)=\sqrt{1- \mm^2}$, $\dist(L_1, L_2)=\dist(L_3, L_4)=(1-\sqrt{1- \mm^2}) / 2$. Then we draw $k$ lines $\{M_i: 1\leq i \leq n\}$ perpendicular to line $L_1$ and evenly spaced with distance $\mm$ between consecutive ones, i.e. $\dist(M_1, M_i) = (i-1) \mm$ for all $1 \leq i \leq n$ and all $M_i$'s are on one side of $M_1$. The intersections between $M_i$'s an $L_j$'s define the points of our UDG-representation of $G^*$ as follows:
\begin{itemize}
\item if $i$ is odd,  then $f(b_i)=M_i \cap L_1$, $f(r_i)=M_i \cap L_4$, $f(g_i) = M_i \cap L_3$;
\item if $i$ is even, then $f(b_i)=M_i  \cap L_4$, $f(r_i)=M_i \cap L_1$, $f(g_i)=M_i \cap L_2$. 
\end{itemize}

It is not hard to see that $f(C_1) \subseteq L_1 \cup L_2$ and $f(C_2) \subseteq L_3 \cup L_4$. 
The diameter of $f(C_1)$ is bounded by 
$\sqrt{\dist(L_1, L_2)^2 + \dist(M_1, M_n)^2}$. As $\dist(L_1, L_2)=\frac{1-\sqrt{1-\mm^2}}{2} \leq \frac{1-(1-\mm^2)}{2}=\frac{\mm^2}{2}$ and $\dist (M_1, M_n) = (n-1)\mm$, we deduce that the diameter of $f(C_1)$ is at most $n\mm$. By symmetry, the diameter of $f(C_2)$ is also
bounded by $n\mm$. Thus, as $\mm \leq 1/n$, the diameter of each of $f(C_1)$ and $f(C_2)$ is at most 1, which correspond to cliques $C_1$ and $C_2$ in $G^*$. 
It remains to show that $u \in C_1$ and $v \in C_2$ are adjacent in $G^*$ if and only if
the distance between the corresponding points $f(u)$ and $f(v)$ is at most 1. 
Since this is mostly technical task, we moved the confirming calculations to Appendix \ref{app:T15}.
\end{proof}

\begin{figure}[h]
    	%\centering
    		
    	\begin{subfigure}[t]{0.5\textwidth}
       	\includegraphics[scale=0.4]{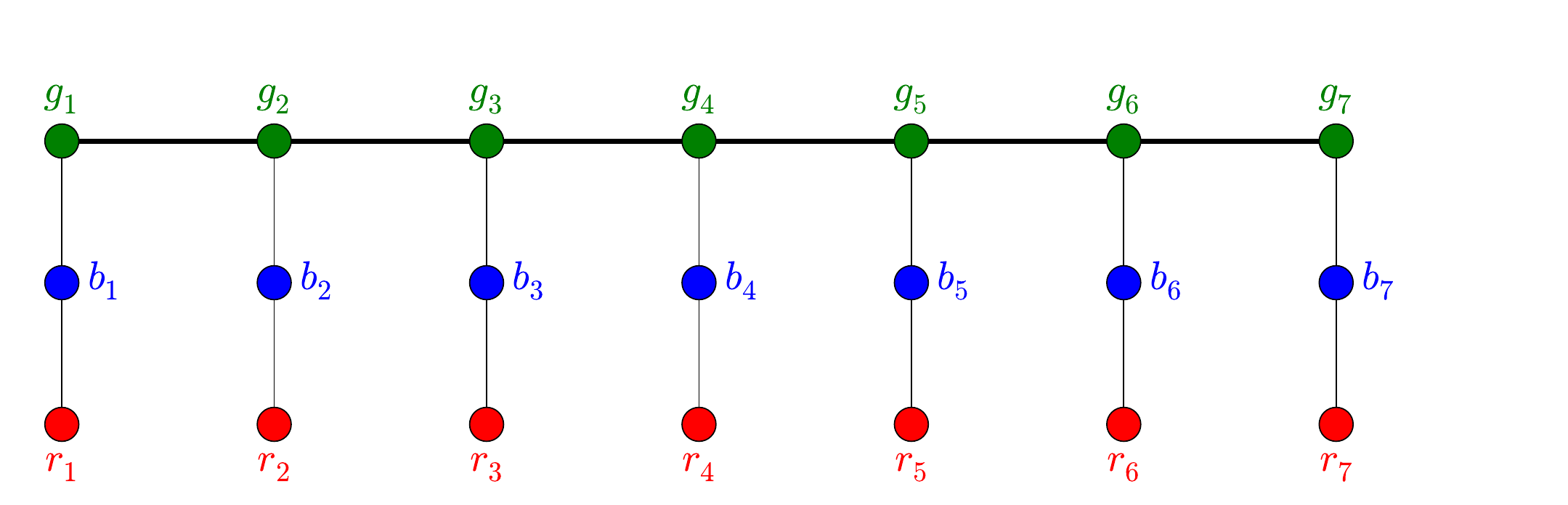}
       	\caption{Basic lobster $G$ of length 7}
		\label{fig:simple_lobster}
	\end{subfigure}
	~
	\begin{subfigure}[t]{0.5\textwidth}
       	\includegraphics[scale=0.4]{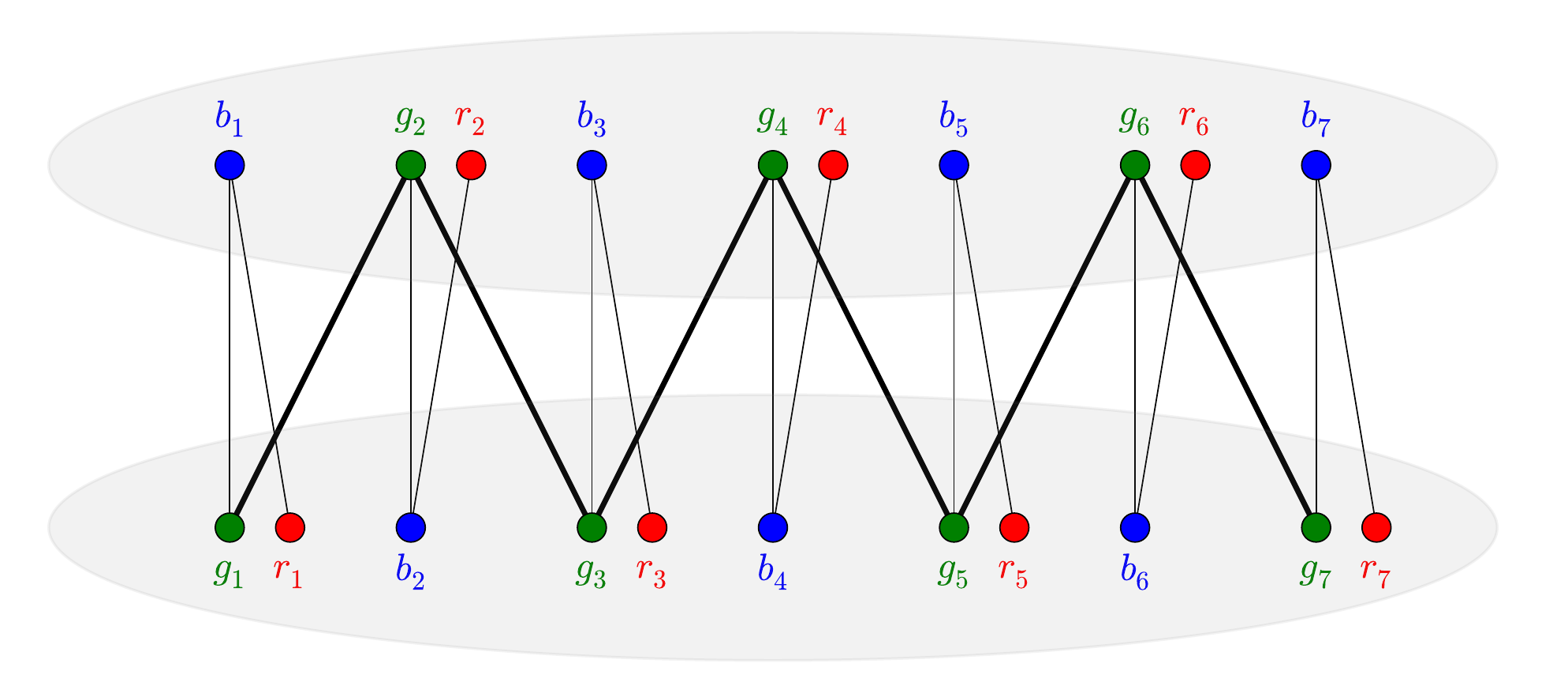}
              \caption{The graph $G^*$. Vertices in a gray area form a clique}
		\label{fig:simple_lobster_star}
	\end{subfigure}

	\begin{subfigure}[t]{\textwidth}
		\centering
		
       	\includegraphics[scale=0.8]{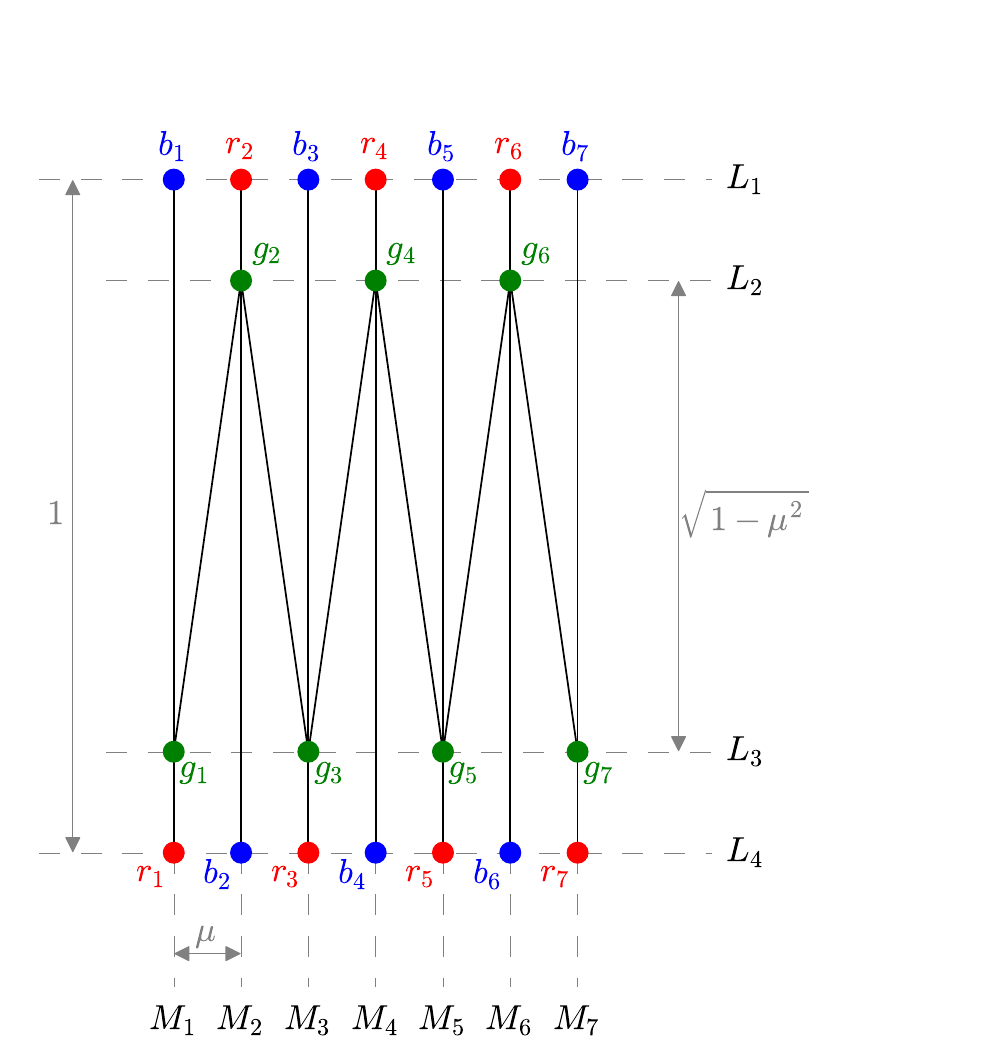}
              \caption{The UDG-representation of $G^*$}
		\label{fig:simple_lobster_star_repr}
	\end{subfigure}
			
    	\caption{}
    	\label{c4freelobster}
\end{figure}

%%%%%%%%%%%%%%%%%%%%   C_4-FREE MAIN LEMMA   %%%%%%%%%%%%%%%%%%%%%%%%%

\begin{figure}[h]
      	\begin{subfigure}[t]{0.5\textwidth}
    		\centering
       	\includegraphics[scale=0.8]{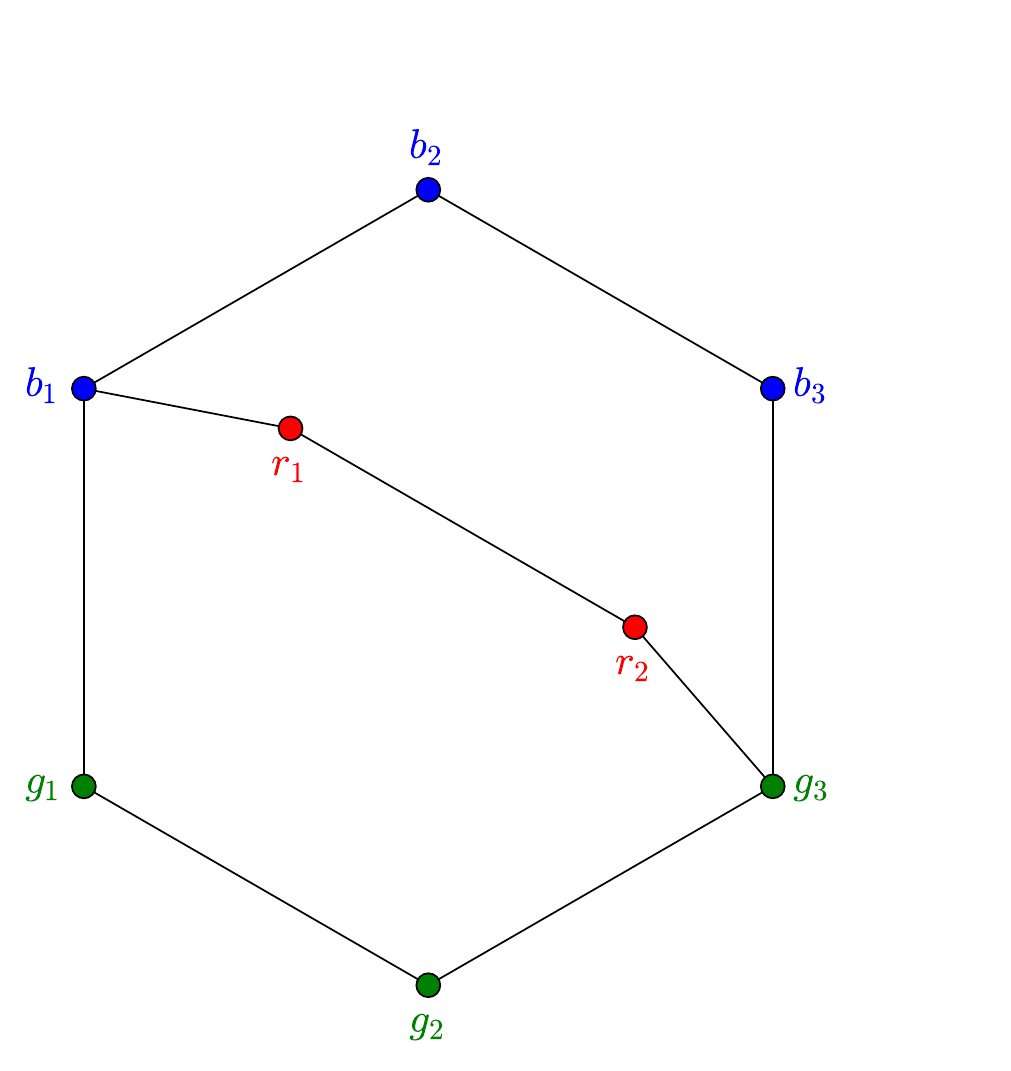}
       	\caption{The graph $C_{6,1}$}
		\label{fig:C6*}
	\end{subfigure}
	~
	\begin{subfigure}[t]{0.5\textwidth}
		\centering
       	\includegraphics[scale=0.9]{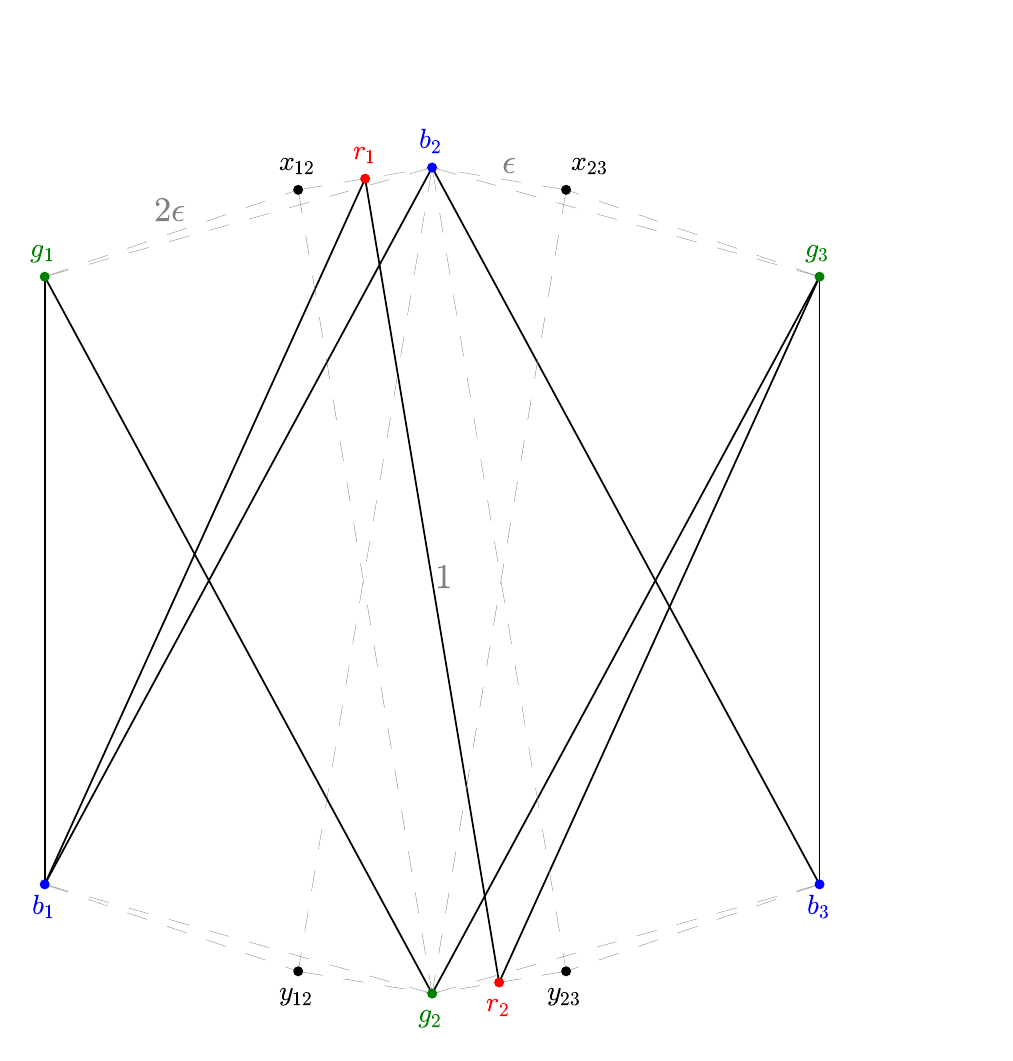}
              \caption{The UDG-representation of $C_{6,1}^*$}
		\label{fig:C6*representation}
	\end{subfigure}

    	\caption{}
    	\label{C6*representation}
\end{figure}

\begin{theorem}\label{th:repBasicGraphs}
%	For every basic graph $G \in \mathcal{X}$, $G^*$ is UDG.  
	Let $G$ be a basic graph in $\mathcal{X}$. Then $G^*$ is UDG.
\end{theorem}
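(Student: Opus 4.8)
The plan is to reduce to a concrete family of graphs and then extend the strip construction of Theorem~\ref{basiclobster}. By Theorem~\ref{th:Xstruct} every graph in $\mathcal{X}$ is an induced subgraph of a generalized hexagonal caterpillar, and such an embedding respects the (essentially unique) bipartition; since $G^* = H^*[S]$ whenever $G = H[S]$ for a subset $S$ meeting the two parts, this yields an induced-subgraph embedding of $G^*$ into the corresponding $H^*$. Using the observation that a generalized hexagonal caterpillar is basic precisely when none of its parallel edges is duplicated, a basic $G$ embeds into a \emph{basic} generalized hexagonal caterpillar. As the class of UDGs is hereditary, it therefore suffices to represent $H^*$ for every basic generalized hexagonal caterpillar $H$, i.e. for every chain $(H_1, b_1, a_2, H_2, \ldots, a_k, H_k)$ of basic hexagonal caterpillars and basic lobsters glued at gluing vertices. (A disconnected $G$ is handled by laying its components side by side, so I take $H$ connected.) The idea is to reuse verbatim the four-line strip geometry of Theorem~\ref{basiclobster} and to place the blocks one after another along it.

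First I would fix $N = |V(H)|$ and $\mm \in (0, 1/N)$, and draw $L_1, L_2, L_3, L_4$ with $\dist(L_1,L_4)=1$, $\dist(L_2,L_3)=\sqrt{1-\mm^2}$, $\dist(L_1,L_2)=\dist(L_3,L_4)=(1-\sqrt{1-\mm^2})/2$, together with perpendicular lines $M_1, M_2, \ldots$ at mutual distance $\mm$, exactly as in Theorem~\ref{basiclobster}. The only genuinely new building block is a hexagonal caterpillar. I would represent each individual hexagon as in Figure~\ref{fig:C6*representation}, then extend the green/blue/red assignment $\GG, \BB, \RR$ of the lobster proof to the vertices of a hexagonal strip $C_{6,k}$ and of $H(k_1, d_1, \ldots, k_n)$ so that the two parts of the bipartition again land on $L_1 \cup L_2$ and on $L_3 \cup L_4$, one vertex per line $M_i$. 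As before, the adjacency of two points across the parts is governed solely by the gap $|i-j|$ between their $M$-lines and by which of $L_1, \ldots, L_4$ they lie on, so the diagonal pairs and the parallel edges of the strip are the only cross-part edges that must be produced, and they can be tuned through the placement. For a single block the two checks are those of Theorem~\ref{basiclobster}: the point sets on $L_1 \cup L_2$ and on $L_3 \cup L_4$ have diameter at most $1$ (using $\dist(L_1,L_2)\le \mm^2/2$, $\dist(M_1,M_N)=(N-1)\mm$ and $\mm \le 1/N$), and the cross distances reproduce exactly the bipartite edge set.

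To assemble the whole chain I would assign each block a consecutive band of $M$-lines, identify each gluing vertex $b_i$ of $H_i$ with $a_{i+1}$ of $H_{i+1}$ at a single point on the common boundary of their bands, and orient each block so that its two gluing vertices occupy its extreme $M$-lines. Taking $\mm$ below the reciprocal of the total number of $M$-lines keeps both global point sets of diameter at most $1$, so the two parts $U$ and $W$ of $H^*$ are realized as cliques, while inside each block the cross-part adjacencies are already correct by the previous step.

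The main obstacle is the seam between consecutive blocks: I must guarantee that a vertex of $H_i$ and a vertex of $H_{i+1}$ that are non-adjacent in $H^*$ map to points at distance strictly greater than $1$, even though their $M$-lines may be close, while simultaneously preserving every adjacency inside each block and through the shared gluing vertex. Since the cross-part edges of a block reach only a bounded number of $M$-lines away from any given vertex, the cleanest fix I see is to insert a short buffer of empty $M$-lines between consecutive bands — harmless for the unit-diameter property as long as the total width stays below $1$ — forcing every pair of points from distinct blocks, other than the identified gluing vertex, to distance $>1$. Verifying that this buffering destroys neither the diagonal/parallel-edge adjacencies at the seam nor the clique property of the two parts is the technical heart of the argument, and, as in Theorem~\ref{basiclobster}, I expect it to reduce to the elementary estimates (1)--(5) and to be relegated to an appendix.
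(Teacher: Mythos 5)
Your opening reduction (pass to connected basic generalized hexagonal caterpillars via Theorem~\ref{th:Xstruct} and heredity of UDGs) is fine in spirit and matches the paper's setup. The fatal problem is the next step: hexagons cannot be represented in the four-line, evenly-spaced-column geometry of Theorem~\ref{basiclobster}, so the assertion that ``the diagonal pairs and the parallel edges of the strip are the only cross-part edges that must be produced, and they can be tuned through the placement'' is exactly the point that fails, and no placement exists. Concretely, with $\dist(L_1,L_4)=1$, $\dist(L_2,L_3)=\sqrt{1-\mm^2}$ and columns $M_i$ spaced $\mm$ apart, a cross-part pair involving $L_1$ or $L_4$ is within distance $1$ only if the two points lie in the \emph{same} column (the $L_1$--$L_3$ and $L_2$--$L_4$ thresholds are $\approx \mm/\sqrt{2}<\mm$), while an $L_2$--$L_3$ pair is adjacent iff the column gap is at most $1$. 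Now take a hexagon $v_1v_2\ldots v_6$ of $G$: between the two parts, $G^*$ must contain an induced $6$-cycle, so each $v_i$ needs exactly two neighbours in the opposite part. Under your scheme a point on $L_1\cup L_4$ can have cross-part neighbours only in its own column, and one checks (whether you allow one vertex per column, as you wrote, or several, as in the lobster) that this is incompatible with the hexagon's adjacencies; hence all six points must lie on $L_2\cup L_3$, where adjacency means ``column gap $\le 1$''. But then, up to symmetry, $c(v_2)=c(v_1)-1$, $c(v_6)=c(v_1)+1$, which forces $c(v_3)=c(v_1)-2$ and $c(v_5)=c(v_1)+2$, and $v_4$ would have to be within column gap $1$ of both $c(v_1)-2$ and $c(v_1)+2$ --- impossible. (This is the same phenomenon by which interval-bigraph-type models exclude chordless cycles of length six or more.) So the one genuinely new case your proof must handle --- a block containing a hexagon --- is precisely the case your construction cannot produce; the buffer-column device only addresses seams between blocks, which is a secondary issue.

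This is why the paper's proof of Theorem~\ref{th:repBasicGraphs} does not reuse the strip for hexagons but builds a genuinely two-dimensional configuration: two $1\times\epsilon$ rectangles sharing the pair $b_2,g_2$ but \emph{tilted} relative to one another (so the ``rows'' are bent, not collinear), corner points $g_1,g_3,b_1,b_3$ placed $2\epsilon$ beyond, and midpoints carrying the diagonal path; the correctness of this placement, its extension to edge-sharing and vertex-sharing hexagons, and finally the attachment of a lobster tail whose parameter $\mm$ is chosen so that $\sqrt{1-\mm^2}=\dist(g_1,b_1)$, constitute Claims 1--4 and two appendices --- i.e.\ the actual technical heart of the theorem, none of which is replaced by your argument. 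To repair your proposal you would need, at minimum, an explicit unit-distance placement of a single hexagon (necessarily with non-uniform horizontal positions and points off the four lines, or tilted as in the paper), together with a verification that such placements can be chained along shared edges and vertices and then joined to the lobster strip; as it stands, the proposal assumes away exactly this.
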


\begin{proof}
We will abuse the notation and instead of calling the image of a vertex $v$ by $f(v)$, we will refer to 
it simply by $v$. Thus, when talking about adjacencies, $v$ will be considered as a vertex of the 
graph $G^*$, and when talking about distances, or some other geometric properties, $v$ will mean 
the point $f(v)$. 
We denote $n=|V(G)|$ and we fix a positive parameter $\epsilon < \min\left\{\frac{1}{15n}, \frac{1}{128}\right\}$. 

\vskip7px \noindent
\textbf{Single hexagon.}
We start by representing graph $G^*$ when $G$ is isomorphic to $C_{6,1}$ (see Figure~\ref{fig:C6*}). 
Let
$V(G)=\{g_1, g_2, g_3, b_1, b_2, b_3, r_1, r_2 \}$ and
$E(G)=\{g_1g_2, g_2g_3, g_3b_3, b_3b_2, b_2b_1, b_1g_1, b_1r_1, r_1r_2, r_2g_3\}$.
To construct a UDG-representation $f$ of $G^*$, first, we place 6 points 
$\{x_{12}, b_2, x_{23}, y_{12}, g_2, y_{23}\}$ in the plane forming two rectangles as 
follows (see Figure~\ref{fig:C6*representation}): 
\begin{itemize}
\item $\{x_{12},b_2,y_{23},g_2\}$ forms a $1 \times \epsilon$ rectangle, where $\dist(x_{12}, g_2)=
\dist (b_2, y_{23})=1$, $\dist (x_{12},b_2)= \dist(g_2, y_{23}) = \epsilon$ and $[x_{12},g_2]$ is perpendicular to $[g_2,y_{23}]$.
\item $\{b_2, x_{23}, g_2, y_{12}\}$ forms a $1 \times \epsilon$ rectangle, where $\dist(b_2, y_{12})=
\dist (x_{23},g_2)=1$, $\dist (b_2, x_{23})= \dist(y_{12}, g_2) = \epsilon$ and $[b_2,x_{23}]$ is perpendicular to $[x_{23},g_2]$, and $x_{23} \neq x_{12}$.   
\end{itemize}

\noindent
Further, we place points $\{g_1, g_3, b_1, b_3\}$ as shown in Figure~\ref{fig:C6*representation} such that:
\begin{itemize}
\item $\dist(g_1, g_2)=\dist(g_2, g_3)=\dist(b_1, b_2)=\dist(b_2, b_3)=1$.
\item $\dist(g_1, x_{12})=\dist(g_3, x_{23})=\dist(b_1, y_{12})=\dist(b_3, y_{23})=2 \epsilon$.
\end{itemize} 

\noindent
Finally, we place the points $\{r_1, r_2\}$ as follows:
\begin{itemize}
\item $r_1$ in the middle of the segment $[x_{12},b_2]$, $r_2$ in the middle of the segment $[g_2,y_{23}]$.
%\item $r_1^j \in (x_{12}, b_2)$, $r_2^j \in (g_2, y_{23})$ such that $r_1^jr_2^j$ is parrallel to $r_1r_2$ and 
%the distance between the two lines $\dist(r_1^jr_2^j, r_1r_2) \leq \epsilon'$ for some small $\epsilon' < \epsilon $ which we will specify later.  
\end{itemize}

We argue that this is indeed a UDG-representation of $G^*$. First of all, observe that the two parts of bipartition of $G$ are 
	$C_1=\{g_1, g_3, b_2, r_1\}$ and 
	$C_2=\{b_1, b_3, g_2, r_2\}$.
By triangle inequalities, one may obtain that the distances between the points in
$f(C_1)$ (resp. $f(C_2)$) are at most $6 \epsilon<1$. Hence we only need to deal with distances between $f(C_1)$ and $f(C_2)$.  Note that $\{b_2, r_1, g_2, r_2\}$ belongs to the rectangle $\textup{Conv}(x_{12}, b_2, y_{23}, g_2)$ and then it is easy to see that 
$\dist(r_1, r_2)=1$ and the other distances $\dist(r_1, g_2), \dist(r_2, b_2), \dist(b_2, g_2)$ between these points are at least 
$\sqrt{1+(\epsilon/2)^2} \geq 1+ \epsilon^2/16$. 
%$\sqrt{1+(\frac{\epsilon}{2})^2} \geq 1+ \frac{1}{16}\epsilon^2$. 
The rest of the pairs of vertices in the different parts $C_1$ and $C_2$ include a ``corner'' vertex -- $g_1, g_3, b_1$ or  $b_3$, and by symmetry, it is enough to show

\begin{itemize}[leftmargin=*]
	\item[] \textbf{Claim 1.} $\dist(g_1, y) \leq 1$ for all $y \in [b_1, y_{12}] \cup [y_{12}, g_2]$; and
	\item[] \textbf{Claim 2.} $\dist(g_1, y)>1$ for all $y \in (g_2, y_{23}] \cup [y_{23}, b_3]$.
\end{itemize}

\noindent
One can see that Claim 1 holds, by extending the segment $[g_1, b_1]$ to $[g_1,b]$ such that $\triangle g_1bg_2$ is a right-angled triangle with diagonal $[g_1,g_2]$ of length $1$, and $[g_1,b]$ being
perpendicular to $[b,g_2]$. Then, noticing that $y_{12}$ and $b_1$ lie inside this triangle, we conclude, that all the points of $[b_1,y_{12}] \cup [y_{12},g_2]$ lie inside this triangle and have distances at most 1 to any vertex of the triangle (in particular to $g_1$). 
We note that one can be more precise and by estimating the projections calculate that the distance between $g_1$ and $y_{12}$ is at most $\sqrt{1-(\epsilon/2)^2}\leq 1- \epsilon^2/8 $ and the distance between $g_1$ and the midpoint of $[y_{12},g_2]$ is at most $\sqrt{1-(\epsilon/4)^2} \leq 1-\epsilon^2/32$. Also, one can calculate that the distance between $g_1$ and $b_1$ is between $1-10\epsilon^2$ and $1-9\epsilon^2$ (this estimate holds for $\epsilon<1/5$). 

Regarding Claim 2, one should first observe that $\textup{Conv}(g_1, g_2, b_2, x_{12})$ is a quadrilateral, i.e. $x_{12}$ indeed lies above the 
line $g_1b_2$ in the Figure~\ref{C6*representation} (and by symmetry $y_{23}$ lies below $g_2b_3$). This follows from the fact that $\triangle g_1x_{12}g_2$ is an isosceles triangle with $\angle g_2g_1x_{12} =\angle g_1x_{12}g_2 =\alpha<90$. Thus $\angle g_1x_{12}b_2 =\alpha+90<180$. From this we deduce that $\angle b_2g_1g_2 <\alpha<90$ and hence $\angle g_1g_2b_3 >90$.
The latter inequality implies that any point on $(g_2, b_3]$ has distance greater than $\dist(g_1,g_2)=1$ and hence we are done. Indeed, it is not hard to evaluate using the Pythagorean theorem, that $\dist(g_1, y_{23}) \geq \sqrt{1+\epsilon^2} \geq 1+\epsilon^2/4$ and 
$\dist(g_1, r_2) \geq \sqrt{1+(\epsilon/2)^2}\geq{1+\epsilon^2/16}$. 

\vskip7px \noindent
\textbf{Two hexagons sharing an edge.}
Now we proceed to showing how to represent $G^*$, where $G$ consists of two $C_{6,1}$ sharing an
edge, and two additional pending vertices $a_3$ and $h_3$ (see Figure \ref{fig:2hex_edge}).
The corresponding representation is illustrated in Figure \ref{fig:2hex_edge_representation}.
%The representations of two hexagons share 
%In the Figure~\ref{2hex_edge_representation} we obtain such representation by having a two representations sharing a segment $[b_3,g_3]$ and adding additional points $\{a_3, h_3\}$ such that:
Points $a_3, h_3$ are placed in such a way that:
\begin{itemize}
	\item $a_3, h_3 \in L(b_3, g_3)$ and $\dist(a_3,b_3)=1$, $\dist(g_3, h_3)=1$, as shown in the picture. 
\end{itemize}
It is easy to see that distance from $a_3$ to every point in the opposite part except $b_3$ is larger than 1, or indeed larger than $\sqrt{1+(2\epsilon)^2}$. By symmetry, the same holds for points $h_3$ and $g_3$. The distances involving points $g_3$ and $b_3$ are as needed for UDG-representation, because these points belong to both hexagons. For the rest distances, it is enough to show the following

\begin{itemize}[leftmargin=*]
	\item[] \textbf{Claim 3.} For any $x \in [x_{34}, b_4] \cup [b_4, x_{45}] \cup [x_{45},g_5]$ and any 
	$y \in [y_{23}, g_2] \cup [g_2, y_{12}] \cup [y_{12}, b_1]$, $\dist(x, y)>1$.   
\end{itemize}

\noindent
For any $y \in [b_1,y_{12}] \cup [y_{12}, g_2)$ and any $x \in [g_3, x_{34}] \cup [x_{34}, b_4] \cup [b_4, x_{45}] \cup [x_{45}, y_5]$ we have that $\dist(x,y) \geq \dist(g_3, y)>1$ by argument in Claim 2. 
Thus, to prove Claim 3, we can restrict ourselves to $y \in [g_2, y_{23}]$. By symmetry, we can also restrict 
to $x \in [x_{34}, b_4]$, for which it is enough to prove that $\dist(y_{23}, x_{34})>1$. One can easily convince oneself that $\dist(x_{12}, x_{23})<\dist(x_{23}, x_{34})$, hence 
$\dist(y_{23}, x_{34})=\sqrt{\dist(y_{23}, x_{23})^2+\dist(x_{23},x_{34})^2} > \sqrt{\dist(y_{23}, x_{23})^2+\dist(x_{12},x_{23})^2} = \dist(x_{12}, y_{23})=\sqrt{1+\epsilon^2}$. 
Therefore Claim 3 holds and we are done with joining two hexagons by an edge. Moreover, it is not hard to see that the arguments in Claims 1-3 extend to any collection of edge-adjacent hexagons, i.e. to a basic hexagonal
caterpillar with attached pendant vertices.

\begin{figure}[!h]
      	\begin{subfigure}[t]{\textwidth}
    		\centering
       	\includegraphics[scale=0.5]{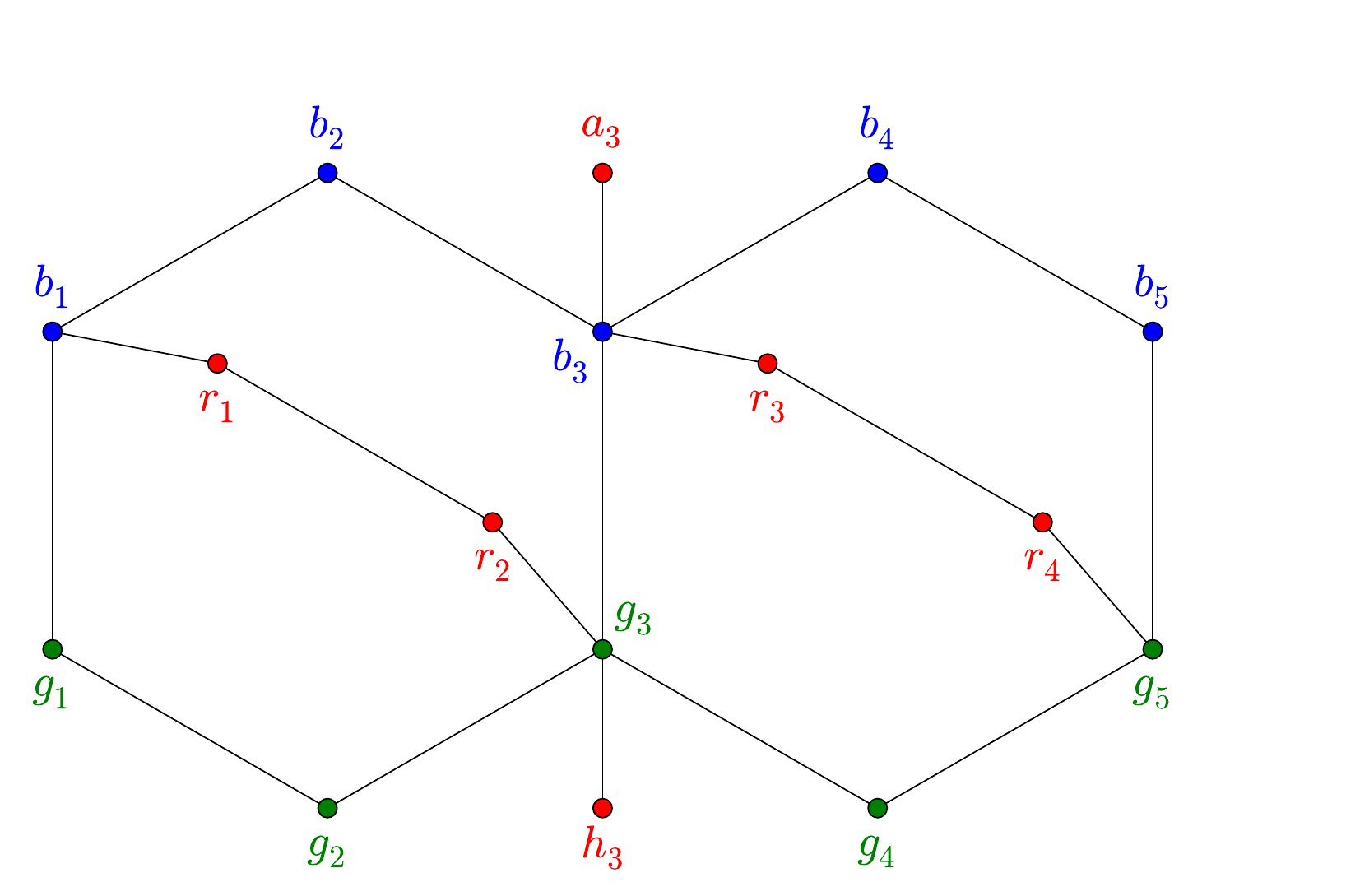}
       	\caption{Two hexagons joined along an edge}
		\label{fig:2hex_edge}
	\end{subfigure}
	
	\begin{subfigure}[t]{\textwidth}
		\centering
       	\includegraphics[scale=1]{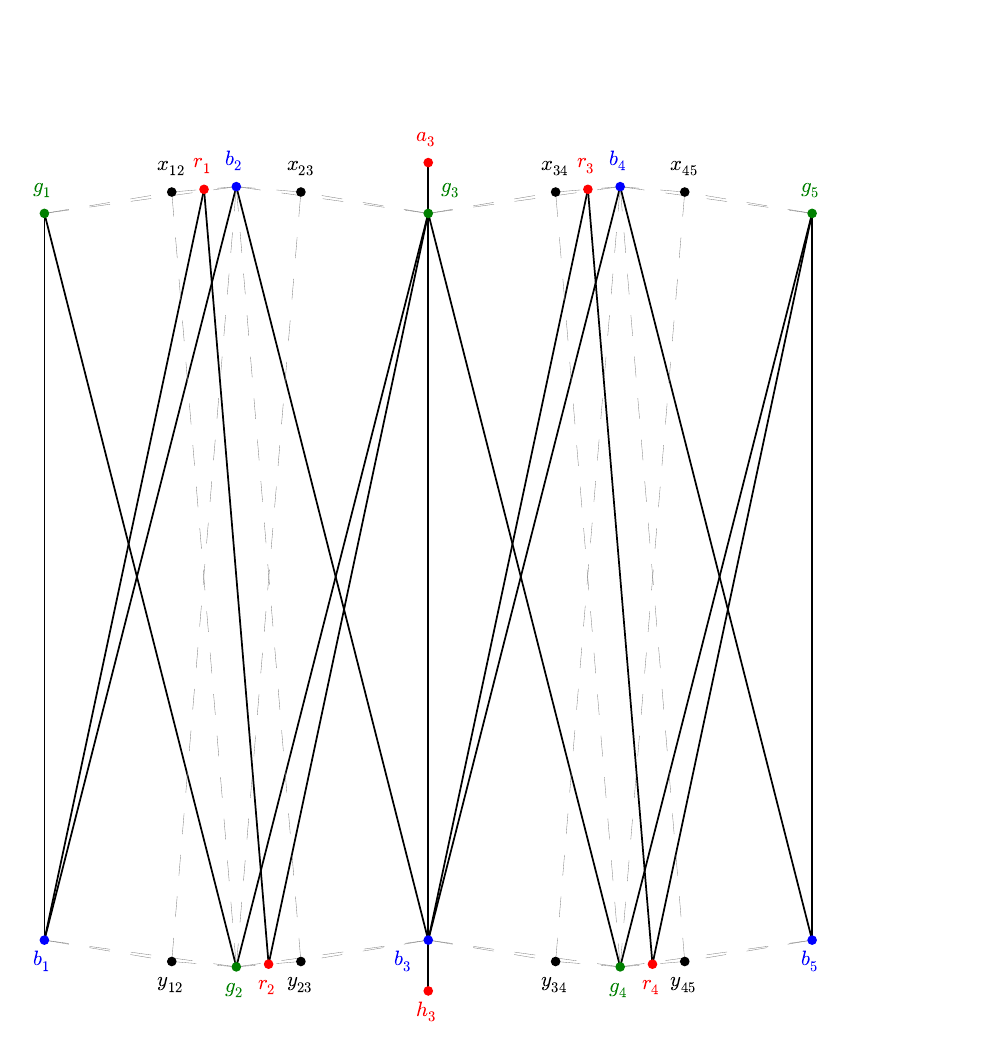}
              \caption{The UDG-representation of two hexagons joined along an edge}
		\label{fig:2hex_edge_representation}
	\end{subfigure}

    	\caption{}
    	\label{2hex_edge_representation}
\end{figure}

\begin{figure}[!h]
      	\begin{subfigure}[t]{\textwidth}
    		\centering
       	\includegraphics[scale=0.65]{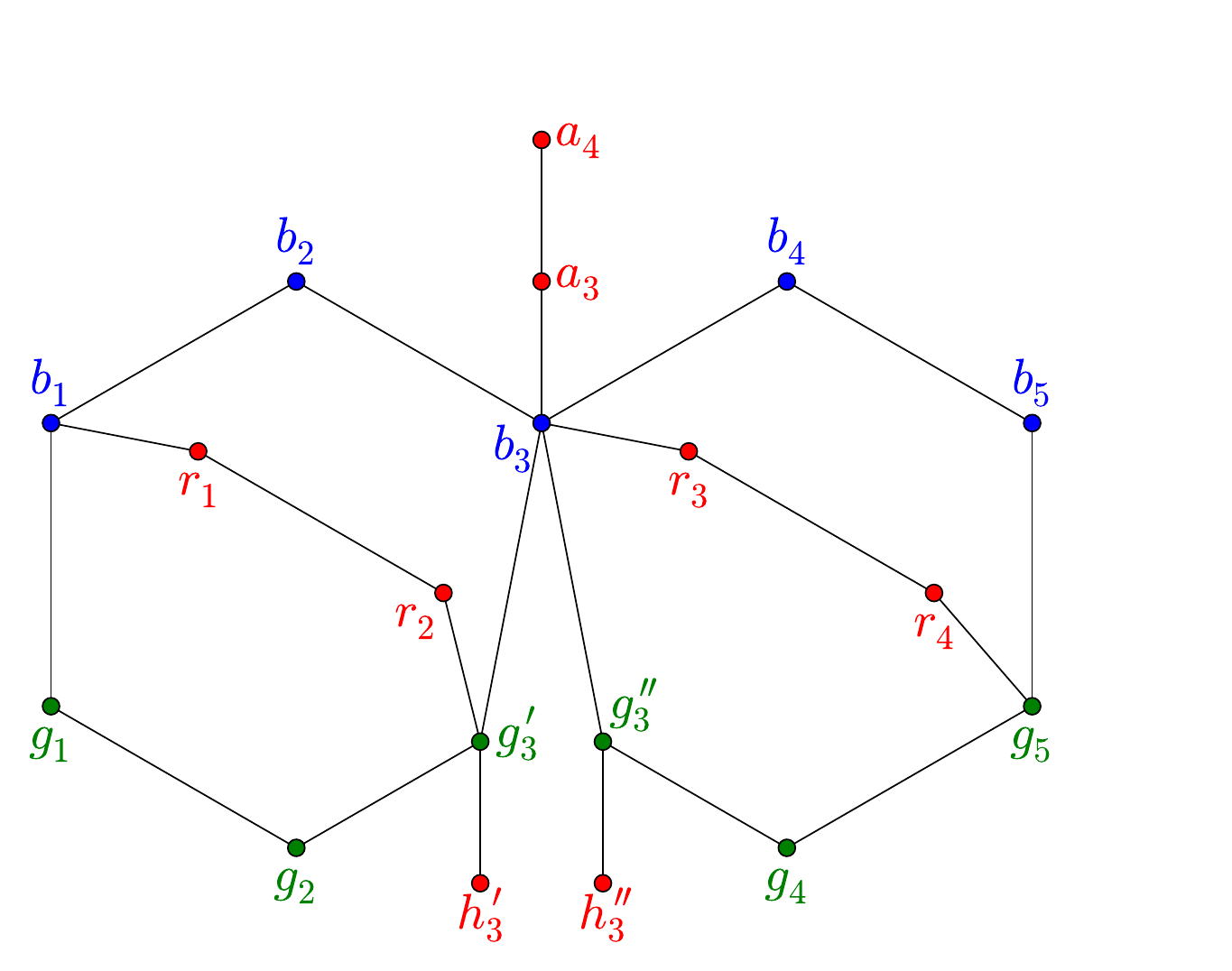}
       	\caption{Two hexagons sharing a vertex}
		\label{fig:2hex_vert}
	\end{subfigure}
	
	\begin{subfigure}[t]{\textwidth}
		\centering
       	\includegraphics[scale=1.22]{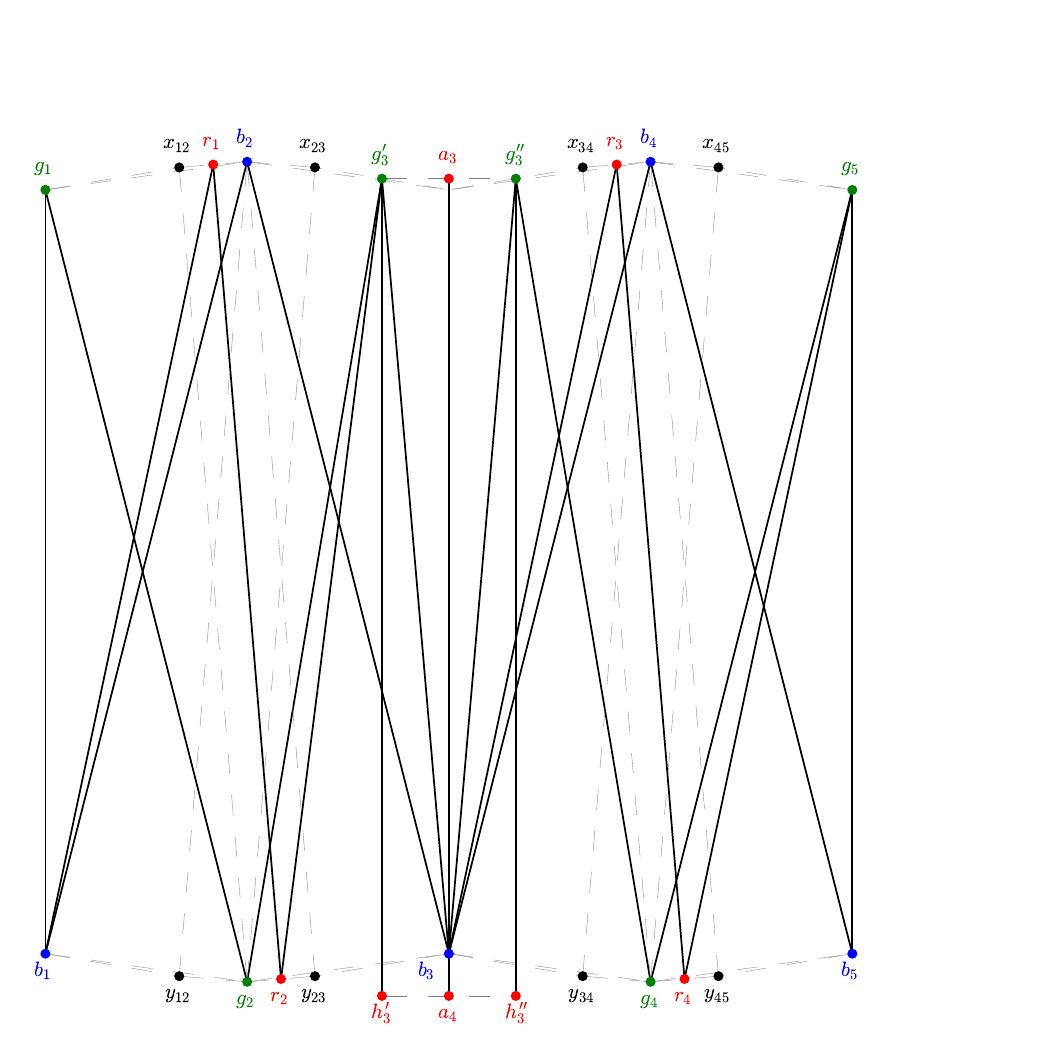}
              \caption{The UDG-representation of two hexagons sharing a vertex}
		\label{fig:2hex_vert_representation}
	\end{subfigure}

    	\caption{}
    	\label{2hex_edge_representation2}
\end{figure}

\vskip7px \noindent
\textbf{Two hexagons sharing a vertex.}
Now we will show how to  represent $G^*$, where $G$ consists of two $C_{6,1}$ sharing a vertex
(see Figure~\ref{fig:2hex_vert}).
%Now we will show how to modify the above construction to represent another possible adjacency between %two hexagons, namely when they share a single vertex (see Figure~\ref{fig:2hex_vert}). 
The representation is obtained from the above representation for two hexagons 
sharing an edge, but we replace the vertex $g_3$ by two vertices $g_3'$ and $g_3''$ 
(see Figure \ref{fig:2hex_vert_representation}) such that:

\begin{itemize}
	\item $g_3'$ is the midpoint of $[g_3, x_{23}]$ and $g_3''$ is the midpoint of $[g_3, x_{34}]$. 
\end{itemize} 

\noindent
To prove that the points $g_3'$ and $g_3''$ have proper distances in the two adjacent hexagons, and in a chain of hexagons sharing a vertex or an edge, we will show the following

\begin{itemize}[leftmargin=*]
	\item[] \textbf{Claim 4.} Denote the midpoints of $[b_1, y_{12}]$, $[y_{12}, g_2]$, $[y_{34}, g_4]$, 
	$[y_{45}, b_5]$ by $b_1'', R_2, R_4, b_5'$, respectively. Then, for 
	$x \in \{b_1, b_1'', R_2, R_4, g_4, r_4, b_5',b_5\}$ we have $\dist(g_3', x)>1$ and for 
	$x \in \{g_2, r_2, b_3\}$ we have $\dist (g_3', x)<1$.    
\end{itemize}

\noindent
The proof of Claim 4 is given in Appendix \ref{app:claim4}. We notice that the proof for distances from $g_3'$ to $R_2, R_4, b_1''$ and $b_5'$, ensures that $g_3'$ has correct adjacencies with respect to all possible choices of direction of diagonals in hexagons and with possibility of having further hexagons adjacent at a single vertex $g_1$ or $g_5$. Finally, we place the points $\{h_3', h_3'', a_3, a_4\}$ in the plane as follows:

\begin{itemize}
\item $a_3$ is the midpoint of $[g_3', g_3'']$ and $h_3', a_4$, $h_3''$ are distance 1 below the points $g_3', a_3$ and $g_3''$, respectively.  
\end{itemize} 

It is clear that each of $h_3', a_4, h_3''$ is distance more than 1 away from all the vertices in the upper part except $g_3', a_3, g_3''$, respectively. Hence we only need to verify
the distances involving $a_3$. Observe that for all $y \in [b_1, y_{12}] \cup [y_{12}, g_2]$ we have 
$\dist(a_3, y)>\dist(g_3,y) \geq 1$. Also $\dist(a_3, b_3)<\dist(g_3',b_3)<1$. 
Hence, it is enough to show that $\dist(a_3, r_2)>1$. 
The proof of the latter fact can be found in Appendix \ref{app:dist_a3_r2}.

%This finishes the analysis of the join between two hexagons at a single vertex. 
\vskip7px \noindent
\textbf{Connecting a chain of hexagons with a lobster.}
To finish the proof, we will show how a chain of hexagons can be attached to a lobster. 
An example is pictured in Figure~\ref{hex_cat_representation}.
To attach hexagon to a lobster at vertex $g_7$, we use the representation of hexagon obtained for joining hexagons at one vertex, i.e. 
we use point $b_7'$ with an attached pending vertex and point $g_7$ with a leg of size 2 attached exactly as 
in construction of hexagon joined to another hexagon at a vertex. This ensures that the first leg of lobster 
is attached correctly. 
Then we use the construction of lobster obtained in Theorem~\ref{basiclobster}.
In Theorem~\ref{basiclobster}, the lobster was uniquelly determined by a parameter $\mm$. The distance between two inner lines $L_2$ and $L_3$ was $\sqrt{1-\mm^2}$. Here, we choose $\mm$ to be such that this distance
is equal to $\dist(g_1, b_1)$. For the record, as we noted before, $1-10\epsilon^2 \leq \dist(g_1, b_1) \leq 1-9 \epsilon^2$ implies that  $(1-10\epsilon^2)^2 \leq 1-\mm^2 \leq (1-9\epsilon^2)^2$. Expanding, one can estimate that $1-25 \epsilon^2 \leq (1-10\epsilon^2)^2$ and for $\epsilon<1/9$, we can obtain the estimate $(1-9\epsilon^2)^2 \leq 1-16\epsilon^2$. Hence it follows that $4\epsilon \leq \mm \leq 5\epsilon$, which is roughly represented as the spacing between the lobster legs in the Figure~\ref{fig:hex_cat_representation}. It easily follows that the inner vertices of lobster are more than 1 away from any inner (not belonging to lobster) vertices of any hexagon. 
This completes the proof that for any basic graph $G \in \mathcal{X}$, $G^*$ is representable as a UDG. 

\end{proof}
\begin{figure}[!h]
      	\begin{subfigure}[t]{\textwidth}
    		\centering
       	\includegraphics[scale=0.82]{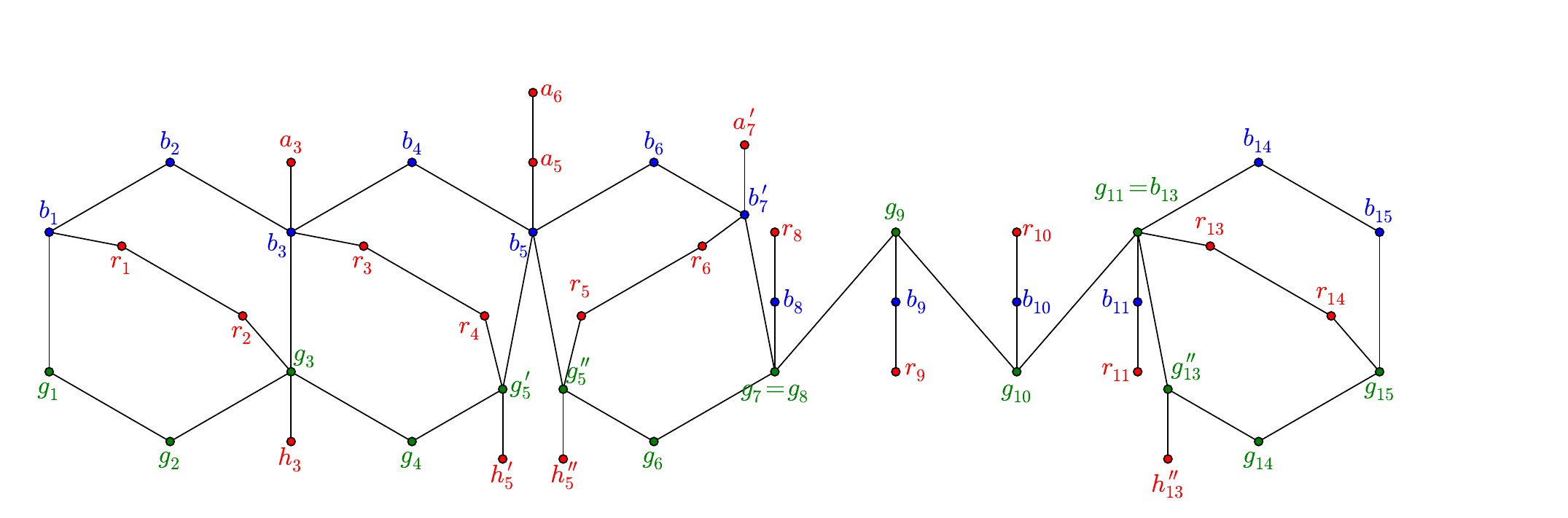}
       	\caption{Hexagonal caterpillar}
		\label{fig:hex_cat}
	\end{subfigure}
	
	\begin{subfigure}[t]{\textwidth}
		\centering
       	\includegraphics[scale=0.82]{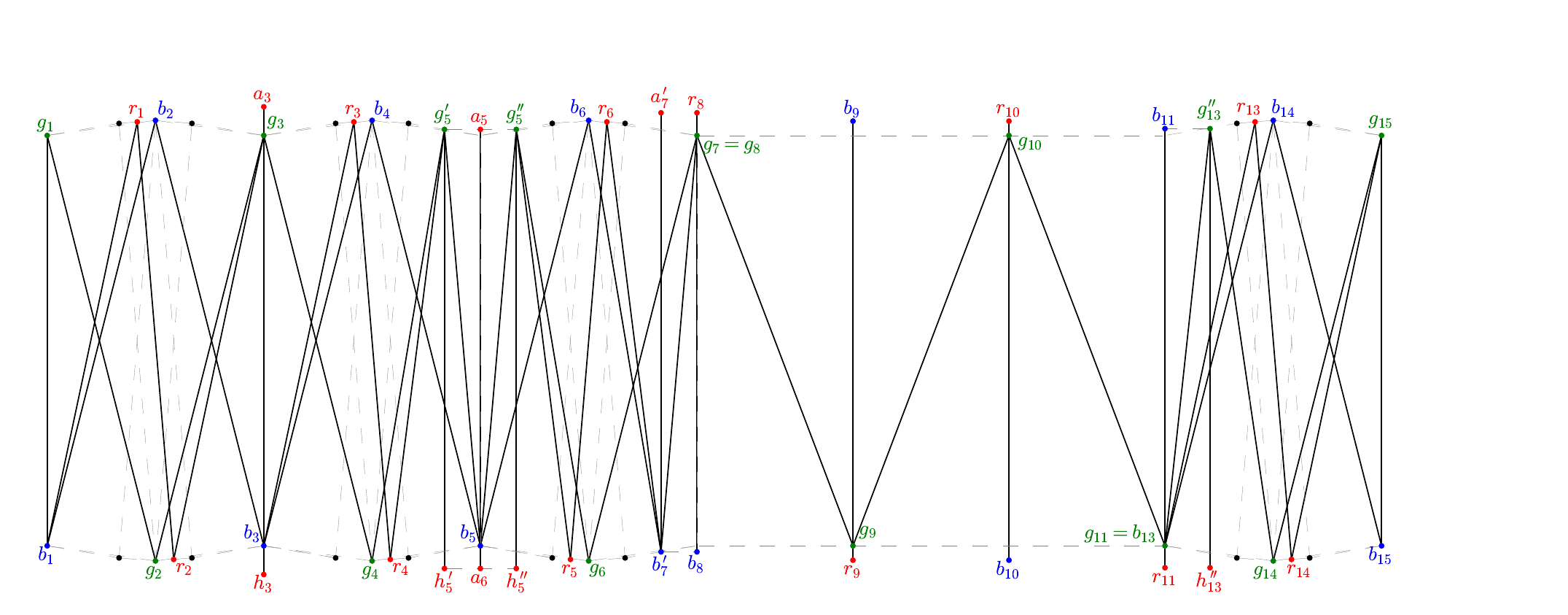}
              \caption{The UDG-representation of hexagonal caterpillar}
		\label{fig:hex_cat_representation}
	\end{subfigure}

    	\caption{}
    	\label{hex_cat_representation}
\end{figure}

In the following theorem we prove several properties of representation of basic graphs that
are important for representation of general graphs.

\begin{theorem}[Properties of basic graph representation]\label{lem:basic_rep}
	Let $G=(U,W,E)_c$ be a basic $C_4^*$-free co-bipartite UDG with $n=|V(G)|$. 
	%Let $0 < \Delta < \frac{1}{3}$, $0 < q \leq r < 6$.
	Then for every positive $\epsilon < \min\{\frac{1}{15n}, \frac{1}{128}\}$ there exist
	$\Delta, q, r$ with $0 < \Delta < \frac{1}{3}$, $0 < q \leq r < 6$, and
	a UDG-representation $f: V(G) \rightarrow \mathbb{R}^2$ of $G$ with the following properties:
	\begin{enumerate}
		\item[(1)] $f(U) \subseteq D_1$ and
		$f(W) \subseteq D_2$,
		where $D_1 = [0, \Delta] \times [\sh, \sh]$,
		$D_2 = [0, \Delta] \times [1-\sh, 1+\sh]$
		with $\sh = r\epsilon^2$; 
		
		\item[(2)] For any vertices $a \in U$ and $b \in W$ either $\dist_f(a,b) = 1$ or 
		$|\dist_f(a,b)-1| \geq q\epsilon^2$;

		\item[(3)] For every parallel edge $ab$ of $G$, $\dist_f(a, b) = 1$. Moreover, 	
		$\dist_f(a, c) \neq 1$ and $\dist_f(c, b) \neq 1$ for any vertex $c \in V(G)$ different 	
		form $a$ and $b$.
			
	\end{enumerate}
\end{theorem}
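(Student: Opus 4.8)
The plan is to reuse the explicit representation already produced in Theorem~\ref{th:repBasicGraphs} rather than build a new one: the three properties are quantitative readings of that construction. Since $G=(U,W,E)_c$ is a basic $C_4^*$-free co-bipartite UDG, its starred graph $G^*$ is $C_4$-free and, by the forbidden-subgraph results of the previous section together with $C_4$-freeness, lies in $\mathcal{X}$; moreover $G^*$ is basic, because being twins is preserved under starring. Hence by Theorem~\ref{th:Xstruct} the graph $G^*$ is an induced subgraph of a basic generalized hexagonal caterpillar, and applying the construction of Theorem~\ref{th:repBasicGraphs} to $G^*$ yields, for the same $\epsilon<\min\{\frac{1}{15n},\frac{1}{128}\}$, a UDG-representation of $(G^*)^*=G$. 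It remains to fix a convenient coordinate frame and to exhibit admissible constants $\Delta,q,r$.

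For property~(1), first I would apply the isometry that makes the parallel lines of the construction horizontal, sends the two \emph{outer} lines ($L_1,L_4$ of Theorem~\ref{basiclobster}, respectively the long sides of the defining rectangles of Theorem~\ref{th:repBasicGraphs}) to $y=0$ and $y=1$, and places the leftmost image at $x=0$; after a reflection if needed, $U$ occupies the lower lines and $W$ the upper ones. Horizontally, consecutive columns are spaced by $O(\epsilon)$ (at most the gluing value $\mm\le 5\epsilon$, respectively the rectangle widths), so over at most $n$ columns the total width is at most $5n\epsilon<\frac{1}{3}$ by $\epsilon<\frac{1}{15n}$, giving $\Delta<\frac{1}{3}$. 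Vertically, a lobster clique lies between its two lines, whose separation is $\dist(L_1,L_2)=(1-\sqrt{1-\mm^2})/2\le 5\epsilon^2$ (since $\sqrt{1-\mm^2}=\dist_f(g_1,b_1)\ge 1-10\epsilon^2$), while a hexagon clique lies within $2\epsilon^2$ of its outer line (the offset of the ``corner'' vertices). Thus every clique stays within vertical distance $5\epsilon^2$ of its outer line, so $f(U)\subseteq[0,\Delta]\times[-\sh,\sh]$, $f(W)\subseteq[0,\Delta]\times[1-\sh,1+\sh]$ with $\sh=r\epsilon^2$ and $r<6$ (indeed $r=5$).

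Property~(2) is the step I expect to be the main obstacle: it asserts that no cross-part distance falls in the punctured neighbourhood $(1-q\epsilon^2,1)\cup(1,1+q\epsilon^2)$. The proof of Theorem~\ref{th:repBasicGraphs} already equips each relevant pair with an estimate of the form $\dist_f(a,b)\le 1-c\epsilon^2$ or $\dist_f(a,b)\ge 1+c\epsilon^2$ (for instance $\ge\sqrt{1+(\epsilon/2)^2}\ge 1+\epsilon^2/16$ and $\le\sqrt{1-(\epsilon/4)^2}\le 1-\epsilon^2/32$). What must be verified is that the finitely many \emph{types} of cross pairs---inside a single hexagon, across an edge- or vertex-join, and between a hexagon and an attached lobster---are exhausted by such estimates, that each carries a definite sign and an $\Omega(\epsilon^2)$ margin, and, crucially, that this margin does not erode as the caterpillar lengthens: once two points lie in columns at least two apart, their horizontal separation alone forces $\dist_f>1$ by a fixed amount, so for each point only boundedly many partners require the delicate $\epsilon^2$ analysis. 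Taking $q$ to be the least of the finitely many constants so obtained (the quoted bounds already allow $q=\frac{1}{32}\le r$) establishes~(2).

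Finally, for property~(3), I would match the parallel (thick) edges with the edges the construction realizes at distance exactly $1$: such an edge joins an extreme (``red''/pendant) endpoint on one outer line to its unique same-column partner on the other outer line, whence $\dist_f(a,b)=\dist(L_1,L_4)=1$ is immediate. For the remaining clause, property~(1) gives $\dist_f(a,c)<1$ for every $c$ in the same part as $a$ (the clique diameter is below $1$), and property~(2) reduces the opposite-part case to checking, over the same finite list of types, that the only opposite-part vertex at distance exactly $1$ from $a$ is its partner $b$; every other opposite-part vertex lies either in a different column (distance strictly above $1$) or on an inner line, and there the estimates of the construction, sharpened by the $q\epsilon^2$ margin of~(2), keep the distance off $1$. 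The argument for $b$ is symmetric, and collecting $\Delta=5n\epsilon$, $r=5$, $q=\frac{1}{32}$ finishes the proof.
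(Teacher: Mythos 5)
Your proposal follows essentially the same route as the paper's proof: take the representation already constructed in Theorem~\ref{th:repBasicGraphs}, read off $\Delta\leq 5n\epsilon<\frac{1}{3}$ from the $O(\epsilon)$ horizontal spacing and $r=5$ from the fact that all points lie in two strips of width $10\epsilon^2$ about the outer lines, obtain $q$ as the minimum of the finitely many $\Omega(\epsilon^2)$ margins established in that construction, and note that the parallel edges are exactly the cross pairs realized at distance precisely $1$ while all other distances stay bounded away from $1$. The only quibble is your parenthetical $q=\frac{1}{32}$: the weakest margin in the construction is $\dist(a_3,r_2)\geq 1+\epsilon^2/64$ (Appendix~\ref{app:dist_a3_r2}), which is why the paper takes $q=\frac{1}{64}$, but since your stated rule is to take the least of the finitely many constants, this does not affect correctness.
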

\begin{proof}
	Let $f$ be a representation obtained in Theorem \ref{th:repBasicGraphs}. To the assumption that $\epsilon<\frac{1}{128}$ made in the theorem, we also add $\epsilon<\frac{1}{15n}$ to ensure that the representation lies in the strip of length $\Delta<\frac{1}{3}$. One can obtain such estimate by noting that the distance in $x$-coordinate between two consecutive points is less than $5\epsilon$, hence, the $n$ points will fit in the strip of length $5n\epsilon<\frac{1}{3}$. 
Observe that the shortest distance in $y$-coordinate between two points from different parts is obtained by $\dist_f(g_1, b_1)$ and it is at least $1-10\epsilon^2$. Also observe that every point has at least 1 neighbour in the other part, i.e. distance at most 1 from some point in another part. From these two observations, we can conclude that all the points lie in two strips of width $10\epsilon^2$ which are distance $1-10\epsilon^2$ away from each other. Hence, it follows that $r=5$ satisfies the conditions. From the proof of the theorem it is also not hard to see that we can take $q=\frac{1}{64}$. Finally, notice that every parallel edge satisfies property (3), so we are done.
\end{proof}

%%%%%%%%%%%%%%%%%%
\subsubsection{Representation of general graphs}\label{sss:general}
%%%%%%%%%%%%%%%%%%

Let $G=(U,W,E)_c$ be an arbitrary graph from $\mathcal{X}^*$ and let $H$ be a basic graph in $\mathcal{X}^*$ such that $G$ is obtained from $H$ by duplicating some of its parallel edges.
In this section we show how to extend a representation of $H$ described in the previous section
to a representation of $G$. 
We also prove that the resulting representation possesses certain properties, that will be
important in Section \ref{ss:comp}.

Let $e_i = a_i b_i, i = 1, \ldots, s$ be parallel edges of $H$ that have twins in $G$. 
For $i = 1, \ldots, s$ let
%$e_i^1 = (a_i^1,b_i^1), \ldots, e_i^{k_i} = (a_i^{k_i},b_i^{k_i})$
%$e_i^1, \ldots, e_i^{k_i}$ 
$e_i^j = a_i^j b_i^j, j = 1, \ldots, k_i$,
be twin edges of $e_i$ in $G$, and let $k = k_1 + \cdots + k_s$. We say that vertices in $V(G) \setminus V(H)$
are \textit{new} vertices.
For convenience we let $a_i^0 = a_i$ and $b_i^0 = b_i$.
Let $h$ be a representation of $H$ with
chosen positive $\epsilon < \min \left\{ \frac{1}{15|V(H)|}, \frac{1}{128}\right \}$ and 
parameters $\Delta$, $q$ and $r$ guaranteed by Theorem \ref{lem:basic_rep}.
%parameters $q$ and $r$ guaranteed by Theorem \ref{lem:basic_rep}, with $q \leq r$.

%there exists a representation $f$ of $H$ possessing properties (1)-(3) stated in Lemma 
%\ref{lem:basic_rep}. 
First we define an extension $g: V(G) \rightarrow \mathbb{R}^2$ of $h$ and then show that $g$ is a representation of $G$.
To define $g$ we choose $t=\frac{q}{64\sqrt{r}} > 0$ and let $t_1 = \frac{t}{k}$. 
Since $g$ is an extension of $h$, it maps all vertices of $H$ to the same points as $h$ does,
that is $g(x) = h(x)$ for every $x \in V(H)$. Further, we define mapping of new vertices. %, that form twin edges. 
Informally, for the edge $e_i = a_i b_i$ we place $a_i^j, b_i^j, j = 1, \ldots, k_i$ in the plane in such a way
that $a_i, b_i, a_i^{k_i}, b_i^{k_i}$ form a ``narrow'' rectangle with $[a_i, b_i]$ and $[a_i^{k_i}, b_i^{k_i}]$ 
being parallel
sides, and $[a_i^j, b_i^j], j = 1, \ldots, k_i-1$ are segments parallel to $[a_i, b_i]$ and evenly spaced within
the rectangle. Formally, for every $i=1, \ldots, s$ and $j=1, \ldots, k_i$ we define 
$g(a_i^j)$ and $g(b_i^j)$ in such a way that:
\begin{enumerate}
	\item the segment $[g(a_i^j),g(b_i^j)]$ is parallel to the segment $[g(a_i),g(b_i)]$;
	\item $\dist_g( a_i^j, b_i^j ) = 1$;
	\item $\dist_g( a_i, a_i^j ) = \dist_g( b_i, b_i^j )  = j \frac{t}{k} \epsilon = j t_1 \epsilon$;
	\item $\dist_g( a_i^j, a_i^{j+1} ) = \dist_g( b_i^j, b_i^{j+1} ) = \frac{t}{k} \epsilon = 
	t_1 \epsilon$
	(for $j=0, \ldots, k_i-1$);
	\item each of the segments $[g(a_i),g(a_i^j)]$ and $[g(b_i),g(b_i^j)]$ is perpendicular to
	the segment $[g(a_i), g(b_i)]$;
	\item (for definiteness) $g(a_i^j)$ and $g(b_i^j)$ have larger $x$-coordinate than $g(a_i)$
	and $g(b_i)$, respectively.
\end{enumerate}

%Notice that for every $i = 1, \ldots, s$, all the vertices $a_i, a_i^1, \ldots, a_i^{k_i}$ 
%are mapped by $g$ to a segment perpendicular to $[g(a_i), g(b_i)]$ that is almost parallel to $x$-axis. Indeed, since 
%
%
%Indeed, if a prime edge $(a_i, b_i)$ is a pendant edge of a hexagon or a caterpillar in
%$H$, then $[g(a_i), g(b_i)]$ is parallel to $y$-axis and hence the segment containing 
%$a_i, a_i^1, \ldots, a_i^{k_i}$ is parallel to $x$-axis. Now if $(a_i, b_i)$ a diagonal edge 
%of a hexagon in $G$, then the angle between the segment containing 
%$a_i, a_i^1, \ldots, a_i^{k_i}$ and $x$-axis is at most $\epsilon$. By symmetry, the same
%fact is true for segments containing $b_i, b_i^1, \ldots, b_i^{k_i}$.

\noindent
To prove that $g$ is a UDG-representation of $G$ we need several auxiliary statements.

%We start with obvious
%By and let $D_1=[0, \Delta] \times [-E, E] \subset \mathbb{R}^2$, 
%$D_2=[0, \Delta] \times [1-E,1+ E] \subset \mathbb{R}^2$, where $E = r\epsilon^2$ for
%some $r > 0$.
%\begin{observation}\label{obs:D1_D2_points}
%	Let $A \in D_1$ and $B \in D_2$. Then $\dist(A,B) \geq 1-2r\epsilon^2$.
%\end{observation}
\begin{lemma}
Suppose $ab$ is a parallel edge. Then the angle $\alpha$ between $[g(a),g(b)]$ and the vertical line, 
satisfies $\sin(\alpha) \leq 2 \sqrt{r} \epsilon$. 
\end{lemma}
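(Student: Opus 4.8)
The plan is to reduce the whole statement to a single Pythagorean estimate that uses nothing but the vertical separation of the two strips supplied by Theorem~\ref{lem:basic_rep}. The geometric picture is that a parallel edge has unit length (by construction) while its two endpoints sit in two nearly horizontal strips whose $y$-coordinates differ by essentially $1$; a unit segment spanning a vertical gap of size almost $1$ can deviate from the vertical only by an amount of order $\epsilon$.

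First I would record the two facts that pin down the geometry of $[g(a),g(b)]$. Since $ab$ is a parallel edge it is a cross edge of the co-bipartite graph, so without loss of generality $a\in U$ and $b\in W$; and by property~(3) of Theorem~\ref{lem:basic_rep} (or, if $ab$ is a duplicated twin, by item~(2) of the construction of $g$) we have $\dist_g(a,b)=1$. Writing $g(a)=(a_x,a_y)$ and $g(b)=(b_x,b_y)$, the (acute) angle $\alpha$ between $[g(a),g(b)]$ and the vertical line satisfies $\sin\alpha=|b_x-a_x|/\dist_g(a,b)=|b_x-a_x|$. Thus the lemma is equivalent to the upper bound $|b_x-a_x|\le 2\sqrt r\,\epsilon$ on the horizontal displacement.

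Next I would bound the vertical displacement from below and feed it into Pythagoras. By property~(1) the images of $U$ lie in the strip $\{y\in[-\sh,\sh]\}$ and those of $W$ in the strip $\{y\in[1-\sh,1+\sh]\}$, where $\sh=r\epsilon^2$. Hence $b_y-a_y\ge 1-2\sh>0$, the positivity being immediate because $r<6$ and $\epsilon<\tfrac1{128}$, so $(b_y-a_y)^2\ge(1-2\sh)^2\ge 1-4\sh$. Combining this with $\dist_g(a,b)^2=(b_x-a_x)^2+(b_y-a_y)^2=1$ gives $(b_x-a_x)^2=1-(b_y-a_y)^2\le 4\sh=4r\epsilon^2$, and therefore $\sin\alpha=|b_x-a_x|\le 2\sqrt r\,\epsilon$, as required.

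The one point deserving care is the case of a duplicated (twin) parallel edge $a_i^jb_i^j$: its endpoints are new vertices that need not lie inside the strips $D_1,D_2$, so the strip argument does not apply to them verbatim. I would not try to relocate these points, but instead argue by parallelism: item~(1) of the construction of $g$ places $[g(a_i^j),g(b_i^j)]$ parallel to the originating segment $[g(a_i),g(b_i)]$, so a twin edge makes exactly the same angle with the vertical as the original edge $a_ib_i$, and the bound $\sin\alpha\le 2\sqrt r\,\epsilon$ already proved for $a_ib_i$ transfers unchanged. This bookkeeping is the only mildly subtle step; the quantitative heart of the argument is the elementary estimate above, so I expect no genuine obstacle.
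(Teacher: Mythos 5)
Your proof is correct and takes essentially the same route as the paper: write $\sin\alpha$ as the horizontal displacement of the unit segment $[g(a),g(b)]$, lower-bound the vertical displacement by $1-2r\epsilon^2$ via the strip property of Theorem~\ref{lem:basic_rep}, and finish with the Pythagorean estimate $\sin\alpha\le\sqrt{1-(1-2r\epsilon^2)^2}\le 2\sqrt{r}\,\epsilon$. Your explicit handling of duplicated twin edges by parallelism is a small point the paper's proof passes over silently (it implicitly assumes both endpoints lie in the strips, which is guaranteed only for edges of the basic graph $H$), and your resolution of it is correct.
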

\begin{proof}
Let $g(a)=(x,y)$, $g(b)=(x', y')$. As $\dist_g(a,b)$=1, we have $\sin(\alpha)=|x-x'|$. Notice that since $a$ and $b$
are in different parts, we get $|y-y'| \geq 1-2r\epsilon^2$. From this it follows that
$$
	\sin(\alpha)=|x-x'|=\sqrt{1-|y-y'|^2} \leq \sqrt{1-1+4r\epsilon^2} \leq 2\sqrt{r}\epsilon.
$$ 
\end{proof}

\begin{lemma}\label{twinlemma}
Let $a, a' \in V(G)$ be twins, and
let $g(a)=(x,y)$, $g(a')=(x',y')$. Then $|x-x'| \leq t \epsilon$ and $|y-y'| \leq 2t\sqrt{r}\epsilon^2$.  
\end{lemma}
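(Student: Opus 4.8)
The plan is to reduce everything to the explicit coordinates fixed in the construction of $g$, and then project. Since the basic graph $H$ contains no twin edges, every twin pair in $G$ is produced by the duplication operation; consequently $a$ and $a'$ must both be copies of one endpoint of a single parallel edge. As twins necessarily lie in the same part, I may assume $a = a_i^{j}$ and $a' = a_i^{j'}$ for a common index $i$ and some $j, j' \in \{0, 1, \ldots, k_i\}$ (with $a_i^{0} = a_i$), the case of two copies $b_i^{j}, b_i^{j'}$ on the $W$-side being entirely symmetric. By conditions (3), (5) and (6) of the placement, $g(a_i^{j})$ is obtained from $g(a_i)$ by a displacement of length $j t_1 \epsilon$ in the direction perpendicular to $[g(a_i), g(b_i)]$ (toward larger $x$-coordinate), and likewise for $g(a_i^{j'})$. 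Subtracting, $g(a) - g(a')$ is a single vector of length $|j - j'|\, t_1 \epsilon$ perpendicular to the parallel edge $e_i = a_i b_i$.

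Next I would resolve this displacement into horizontal and vertical components by invoking the preceding lemma. If $\alpha$ denotes the angle that $[g(a_i), g(b_i)]$ makes with the vertical, then a unit vector perpendicular to the edge has horizontal component of absolute value $\cos(\alpha) \leq 1$ and vertical component of absolute value $\sin(\alpha) \leq 2\sqrt{r}\,\epsilon$. Since the signs are irrelevant to the absolute estimates, this gives $|x - x'| = |j - j'|\, t_1 \epsilon \cos(\alpha)$ and $|y - y'| = |j - j'|\, t_1 \epsilon \sin(\alpha)$.

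Finally, the two bounds drop out of the crude estimate $|j - j'| \leq k_i \leq k$ together with $t_1 = t/k$: the horizontal gap is at most $k \cdot \tfrac{t}{k}\epsilon \cdot 1 = t\epsilon$, and the vertical gap is at most $k \cdot \tfrac{t}{k}\epsilon \cdot 2\sqrt{r}\,\epsilon = 2t\sqrt{r}\,\epsilon^2$, exactly as claimed. The one place where care is genuinely needed — and essentially the only thing that could go wrong — is the combinatorial reduction asserting that a pair of twins always lives on a common parallel edge, so that the explicit perpendicular placement of the construction applies verbatim with a single well-defined angle $\alpha$; once that bookkeeping is settled, the remainder is an immediate projection estimate and I anticipate no further obstacle.
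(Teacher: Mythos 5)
Your proof is correct and follows essentially the same route as the paper: both arguments reduce to the fact that twins are placed along a segment of length at most $t\epsilon$ perpendicular to their parallel edge, and then invoke the preceding lemma (that the parallel edge makes angle $\alpha$ with the vertical with $\sin(\alpha) \leq 2\sqrt{r}\epsilon$) to bound the vertical projection by $t\epsilon \cdot 2\sqrt{r}\epsilon$. The only cosmetic difference is that the paper bounds $|x-x'|$ crudely by $\dist_g(a,a') \leq t\epsilon$ rather than via the factor $\cos(\alpha)$, which changes nothing.
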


\begin{proof}
Clearly, the first inequality holds because $|x-x'| \leq \dist_g(a, a') \leq t\epsilon$. 
Now, $|y-y'|=\dist_g(a,a') \sin(\alpha)$ where $\alpha$ is the angle between segment $[a,a']$ and the
horizontal line. This angle is equal to the angle of the parallel edge and vertical line, thus, by previous lemma
we can deduce that $\sin(\alpha) \leq 2\sqrt{r}\epsilon$. Hence, 
$$
	|y-y'| =\dist_g(a,a') \sin(\alpha) \leq 2t\sqrt{r} \epsilon^2.
$$
\end{proof}

The following is an important lemma which will be used for proving that
the defined map $g$ is indeed a UDG-representation of $G$.  

\begin{lemma} \label{maintwin}
Suppose $a, b$ are two vertices in different parts of $G$ with $|\dist_g(a,b) - 1| \geq q \epsilon^2$. 
Let $a'$ be either a twin of $a$ or $a'=a$ and let $b'$ be either a twin of $b$ or $b'=b$. 
Then $\dist_g(a',b') > 1$ iff $\dist_g(a,b) > 1$ and $|\dist_g(a', b')-1| \geq q\epsilon^2/2$. 
\end{lemma}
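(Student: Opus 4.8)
The plan is to control how the distance $\dist_g(a',b')$ deviates from $\dist_g(a,b)$ when we pass from $a,b$ to their twins $a',b'$. By Lemma~\ref{twinlemma}, each of $a'$ and $b'$ is displaced from $a$ and $b$, respectively, by at most $t\epsilon$ in the $x$-coordinate and at most $2t\sqrt{r}\epsilon^2$ in the $y$-coordinate. First I would write $g(a)=(x_a,y_a)$, $g(a')=(x_{a'},y_{a'})$ and similarly for $b,b'$, and estimate the total displacement $\dist_g(a,a')$ and $\dist_g(b,b')$ of each endpoint. Since $\dist_g(a,a') \le t\epsilon$ and $\dist_g(b,b') \le t\epsilon$ (these are the lengths along the twin-segments guaranteed by the construction, bounded by $t\epsilon$), the triangle inequality gives
\begin{equation}
	|\dist_g(a',b') - \dist_g(a,b)| \le \dist_g(a,a') + \dist_g(b,b') \le 2t\epsilon. \label{eq:triangleshift}
\end{equation}

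**Turning the displacement bound into the conclusion.**
The key quantitative step is to check that the shift $2t\epsilon$ from \eqref{eq:triangleshift} is strictly smaller than the gap $q\epsilon^2$ by which $\dist_g(a,b)$ is assumed to avoid $1$. This is exactly where the choice $t = \frac{q}{64\sqrt{r}}$ enters: we have $2t\epsilon = \frac{q}{32\sqrt{r}}\epsilon$, which is \emph{not} automatically below $q\epsilon^2$, so a naive application of \eqref{eq:triangleshift} does not suffice. The hard part will be recognizing that the relevant displacement is not the full $t\epsilon$ but only its effect on the distance to the \emph{opposite part}; the twin-segments $[g(a),g(a')]$ are nearly horizontal (perpendicular to the nearly-vertical parallel edges), so moving along them barely changes the vertical separation that dominates $\dist_g(a,b) \approx 1$. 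I would make this precise by decomposing the displacement into its component along $[g(a),g(b)]$ and the perpendicular component, and observing that only the component parallel to $[g(a),g(b)]$ changes the distance to first order, while that parallel component is controlled by the $y$-displacement bound $2t\sqrt{r}\epsilon^2$ from Lemma~\ref{twinlemma}.

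**The main estimate.**
Carrying this out, the change in $\dist_g(a,b)$ caused by replacing $a$ by $a'$ is, to leading order, the projection of the displacement vector $g(a')-g(a)$ onto the unit vector along $[g(b),g(a)]$; since that unit vector is nearly vertical and the displacement is nearly horizontal, the projection is bounded by roughly $\dist_g(a,a')\cdot\sin(\alpha) \le t\epsilon \cdot 2\sqrt{r}\epsilon = 2t\sqrt{r}\epsilon^2$. Summing the contributions of both endpoints yields
\begin{equation}
	|\dist_g(a',b') - \dist_g(a,b)| \le 4t\sqrt{r}\,\epsilon^2 = \frac{4q\sqrt{r}}{64\sqrt{r}}\epsilon^2 = \frac{q\epsilon^2}{16}. \label{eq:finalshift}
\end{equation}
Since $\frac{q\epsilon^2}{16} < \frac{q\epsilon^2}{2}$, inequality \eqref{eq:finalshift} shows both that $\dist_g(a',b')$ stays on the same side of $1$ as $\dist_g(a,b)$ (because the gap $q\epsilon^2$ strictly exceeds the shift), and that $|\dist_g(a',b')-1| \ge q\epsilon^2 - \frac{q\epsilon^2}{16} \ge \frac{q\epsilon^2}{2}$, which is exactly the desired conclusion.

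**Remaining care.**
I would need to justify the first-order estimate rigorously rather than heuristically: the cleanest route is to avoid linearization entirely and instead bound $|\dist_g(a',b')^2 - \dist_g(a,b)^2|$ directly, expanding the squared distances in coordinates and using the coordinate bounds $|x_a-x_{a'}|\le t\epsilon$, $|y_a-y_{a'}|\le 2t\sqrt{r}\epsilon^2$ from Lemma~\ref{twinlemma}, together with the fact that $|y_a-y_b|\ge 1-2r\epsilon^2$ dominates while $|x_a-x_b|$ is small. Factoring $\dist_g(a',b')^2-\dist_g(a,b)^2 = (\dist_g(a',b')-\dist_g(a,b))(\dist_g(a',b')+\dist_g(a,b))$ and using that the second factor is close to $2$ then converts the squared-distance bound back into the linear bound \eqref{eq:finalshift}. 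The only genuine obstacle is bookkeeping the small-order terms carefully enough that the constant stays below $q/2$; since our derived constant is $q/16$, there is a comfortable margin, so the calculation should close without difficulty.
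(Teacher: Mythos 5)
There is a genuine gap, and it sits exactly where you flagged the ``hard part''. Your main estimate --- that replacing $a,b$ by twins changes the distance by at most $4t\sqrt{r}\epsilon^2 = q\epsilon^2/16$ --- is false in general. The projection argument behind it requires the unit vector along $[g(a),g(b)]$ to be nearly vertical, i.e.\ $|x_a-x_b| = O(\epsilon)$; but $a$ and $b$ are \emph{arbitrary} vertices in opposite parts, and the representation only confines them to strips of horizontal extent $\Delta$, which is of order $1/3$, not of order $\epsilon$. Concretely, if $g(a)=(0,1)$ and $g(b)=(1/3,0)$, a horizontal displacement of $a$ by $t\epsilon$ changes $\dist_g(a,b)$ by roughly $\tfrac{1/3}{\sqrt{1+1/9}}\,t\epsilon \approx 0.3\,t\epsilon$, a quantity of order $\epsilon$ that dwarfs $q\epsilon^2/2$ once $\epsilon$ is small. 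The same unjustified assumption reappears verbatim in your closing paragraph (``$|y_a-y_b|\ge 1-2r\epsilon^2$ dominates while $|x_a-x_b|$ is small''): smallness of $|x_a-x_b|$ is simply not available, and without it the difference-of-squares bookkeeping also produces an order-$\epsilon$ error, since $\bigl||x_{a'}-x_{b'}|^2-|x_a-x_b|^2\bigr| \le 2t\epsilon\cdot\bigl(|x_{a'}-x_{b'}|+|x_a-x_b|\bigr)$ and the second factor can be of order $1$.

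The lemma is nevertheless true in this regime, but for a different reason, and this is exactly why the paper's proof is a case analysis on the horizontal separation. If $|x_a-x_b| > 4\sqrt{r}\epsilon$, one does not compare $\dist_g(a',b')$ with $\dist_g(a,b)$ at all: the Pythagorean theorem applied to the perturbed projections (namely $|x_{a'}-x_{b'}|\ge 3\sqrt{r}\epsilon$ and $|y_{a'}-y_{b'}|\ge 1-4r\epsilon^2$, using $t\le\sqrt{r}/2$) shows directly that \emph{both} distances are at least $1+r\epsilon^2/2 \ge 1+q\epsilon^2/2$, so both sides of the ``iff'' hold with the required margin even though the two distances may differ by order $\epsilon$. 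Only when $|x_a-x_b| \le 4\sqrt{r}\epsilon$ does your perturbation strategy work: there the paper carries out precisely the difference-of-squares computation you sketch, the factor $|x_{a'}-x_{b'}|+|x_a-x_b|$ is then $O(\sqrt{r}\epsilon)$, and one obtains $|\dist_g(a',b')-\dist_g(a,b)| \le 32t\sqrt{r}\epsilon^2 = q\epsilon^2/2$, which closes that case. So your argument, made rigorous, recovers the paper's Case 2, but the large-horizontal-separation case is an essential, not cosmetic, missing piece, and it cannot be handled by any bound on $|\dist_g(a',b')-\dist_g(a,b)|$ alone.
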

\begin{proof}
Let $a=(x,y)$, $a'=(x',y')$, $b=(z,u)$, $b'=(z', u')$. To get the bounds of the distance $\dist_g(a',b')$,
we will compare projections of $[g(a'),g(b')]$ and $[g(a),g(b)]$ onto $x$ and $y$ axes and then apply the Pythagorean theorem.  

First of all, triangle inequalities can be used to obtain that $|x-z| \leq |x-x'| + |x'-z'| + |z-z'|$ and $|x'-z'| \leq |x'-x| + |x-z|+|z-z'|$. Further, by Lemma~\ref{twinlemma}, we have $|x-x'| \leq t\epsilon$ and $|z-z'| \leq t \epsilon$, 
and hence
\begin{equation} \label{xz} 
|x-z|-2t\epsilon \leq |x'-z'| \leq |x-z| + 2t \epsilon .
\end{equation}

\noindent
Similarly, projecting onto $y$-axis, from triangle inequalities we obtain $|y'-u'| \leq |y'-y|+|y-u|+|u-u'|$ and
$|y-u| \leq |y-y'|+|y'-u'|+|u'-u|$. Also from Lemma~\ref{twinlemma} we know that $|y-y'| \leq 2t \sqrt{r} \epsilon^2$ 
and $|u-u'| \leq 2t \sqrt{r} \epsilon^2$. This gives us   
\begin{equation} \label{yu} 
|y-u|-4t \sqrt{r} \epsilon^2 \leq |y'-u'| \leq |y-u| + 4t \sqrt{r} \epsilon^2.
\end{equation}

\noindent
Now, we split our analysis into two cases.

\vskip7px
\noindent
\textbf{Case 1.} $|x-z|> 4 \sqrt{r} \epsilon$. 

\noindent
Since $|y-u| \geq 1-2r\epsilon^2$, we can easily obtain that 
$$
	\dist_g(a,b)^2= |y-u|^2+|x-z|^2 > (1-2r\epsilon^2)^2 + 
	(4\sqrt{r}\epsilon)^2=1+12r\epsilon^2+4r^2\epsilon^4>1.
$$ 
Hence, in this case our aim is to prove
that $\dist_g(a', b')>1$ and $|\dist_g(a',b')-1| \geq q\epsilon^2/2$, i.e. we have to prove that 
$\dist_g(a',b') \geq 1+q\epsilon^2/2$. For this, we use the estimates of the projections 
\begin{align*}
	|x'-z'| \geq & |x-z|-2t\epsilon \geq 4\sqrt{r}\epsilon-2t\epsilon; \\
	|y'-u'| \geq & |y-u|-4t\sqrt{r} \epsilon^2 \geq 1-2r\epsilon^2-4t\sqrt{r} \epsilon^2.
\end{align*}
As $q \leq r$, we have $t=\frac{q}{64\sqrt{r}} \leq \frac{\sqrt{r}}{2}$, and placing this upper bound of $t$ into 
the above inequalities we obtain 
\begin{align*}
	|x'-z'| \geq & 4\sqrt{r}\epsilon-\sqrt{r}\epsilon=3\sqrt{r}\epsilon; \\
	|y'-u'| \geq & 1 - 2 r\epsilon^2 - 2r \epsilon^2=1-4r\epsilon^2. 
\end{align*}
Applying the Pythagorean theorem, we obtain
$$
	\dist_g(a', b')^2 \geq  (3\sqrt{r}\epsilon)^2+ (1-4r\epsilon^2)^2 = 9r\epsilon^2+1-8r\epsilon^2+
	16r^2\epsilon^4 \geq 1+r\epsilon^2+ r^2\epsilon^4/4=(1+r\epsilon^2/2)^2.
$$
Hence, $\dist_g(a',b') \geq 1+r\epsilon^2/2$, and as $r \geq q$, we obtain the required inequality 
$\dist_g(a',b') \geq 1+q\epsilon^2/2$. 

\vskip7px
\noindent
\textbf{Case 2.} $|x-z| \leq 4 \sqrt{r} \epsilon$. 

\noindent
First, consider 
$$
	||x'-z'|^2-|x-z|^2|=||x'-z'|-|x-z|| \times ||x'-z'|+|x-z||.
$$ 
By (\ref{xz}) we have that the first term $||x'-z'|-|x-z||$  is upper bounded by $2t\epsilon$, and the second by 
$$
	|x'-z'|+|x-z| \leq 2|x-z|+2t\epsilon \leq 8\sqrt{r}\epsilon+2t\epsilon \leq 10\sqrt{r} \epsilon,
$$ 
where the latter inequality follows from the fact that $t=\frac{q}{64\sqrt{r}} \leq \frac{r}{64\sqrt{r}} \leq \sqrt{r}$.
%Observing that  $t=\frac{q}{64\sqrt{r}} \leq \frac{r}{64\sqrt{r}} \leq \sqrt{r}$, we further obtain that the second 
%term is bounded by $8\sqrt{r}\epsilon +2t\epsilon \leq 10\sqrt{r} \epsilon$. 
This gives us an upper bound

\begin{equation} \label{xzeq}
	||x'-z'|^2-|x-z|^2| \leq 2t \epsilon \times 10 \sqrt{r} \epsilon=20t\sqrt{r} \epsilon^2.
\end{equation}

\noindent
Similarly, consider 
$$
	||y'-u'|^2-|y-u|^2|=||y'-u'|-|y-u|| \times ||y'-u'|+|y-u||.
$$
By (\ref{yu}), we have $||y'-u'|-|y-u|| \leq 4t\sqrt{r}\epsilon^2$ and 
$$
	|y'-u'|+|y-u| \leq 2|y-u|+ 4t\sqrt{r}\epsilon^2 \leq 2(1+2r\epsilon^2)+4t\sqrt{r}\epsilon^2 \leq 3.
$$ 
This gives us an upper bound 

\begin{equation} \label{yueq}
	||y'-u'|^2-|y-u|^2| \leq 4t\sqrt{r}\epsilon^2 \times 3 \leq 12t\sqrt{r}\epsilon^2.
\end{equation}

\noindent
Adding (\ref{xzeq}) and (\ref{yueq}), we get 
$$
	|\dist_g(a',b')^2-\dist_g(a,b)^2| \leq 32t\sqrt{r}\epsilon^2.
$$
One can easily check, for example by projecting to $y$-axis, that $\dist_g(a,b)+\dist_g(a',b')\geq 1$. 
Hence, 
$$
	|\dist_g(a',b')-\dist_g(a,b)| \leq 32t\sqrt{r}\epsilon^2/(\dist_g(a,b)+\dist_g(a',b)) \leq 32t\sqrt{r}\epsilon^2.
$$
Inserting $t=\frac{q}{64\sqrt{r}}$ we have 
$$
	|\dist_g(a',b')-\dist_g(a,b)| \leq q\epsilon^2/2.
$$ 
As $|\dist_g(a,b)-1| \geq q\epsilon^2$, it follows that 
$|\dist_g(a',b')-1| \geq q\epsilon^2/2$ and that $\dist_g(a',b')>1$ iff $\dist_g(a,b)>1$.
This completes the proof of the lemma.

\end{proof}

\noindent
We are now ready to prove the main results of this section.

\begin{theorem} \label{basicgeneral}
Suppose $h$ is a UDG-representation of the basic graph $H$ which satisfies the conditions
outlined in Theorem~\ref{lem:basic_rep}. Let $\epsilon, r, q, \Delta$ be as in Theorem~\ref{lem:basic_rep}. 
Then $g$ is a UDG-representation of $G=(U,W,E)_c$.
Moreover, the representation $g$ satisfies the following conditions:
\begin{itemize}
\item[(1)] $g(U) \subseteq D_1$, $g(W) \subseteq D_2$ where $D_1=[0, \Delta'] \times [-\sh,\sh]$, 
$D_2=[0,\Delta'] \times [1-\sh, 1+\sh]$ with $\sh=r'\epsilon^2$, $r'=2r$, and $\Delta'=\Delta+\epsilon$. 
\item[(2)]
For every $u \in U$, $w \in W$, we have either $\dist_g(u, w)=1$ or $|\dist_g(u, w)-1| \geq q'\epsilon^2$ for
$q'=\frac{q^2}{64^2r\times4k^2}$.
\end{itemize}
\end{theorem}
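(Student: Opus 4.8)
The plan is to check that the extension $g$ represents $G$ by reducing every distance either to the already-valid representation $h$ of $H$, via Lemma~\ref{maintwin}, or to one explicit computation inside a single duplication ``ladder''. Since $g$ agrees with $h$ on $V(H)$, since $h$ is a UDG-representation of $H$, and since duplication does not alter adjacencies among old vertices, the only distances needing attention are those involving new vertices. I would first record the bookkeeping fact that every $v\in V(G)$ equals, or is a twin of, a unique \emph{base} $\pi(v)\in V(H)$, and that twins share all neighbours off their twin partners; this transfers adjacency between $G$ and $H$ in all but one configuration.

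First I would establish property~(1), which also shows that $U$ and $W$ stay cliques. By Theorem~\ref{lem:basic_rep}(1) the old points lie in strips of half-width $r\epsilon^2$, and by Lemma~\ref{twinlemma} each new point differs from its base by at most $t\epsilon$ horizontally and at most $2t\sqrt{r}\,\epsilon^2$ vertically. Using $q\le r<6$, so that $t=\tfrac{q}{64\sqrt r}<1$ and $2t\sqrt r\le r$, these displacements extend the $x$-range to at most $\Delta+\epsilon=\Delta'$ and keep the $y$-range within $\pm 2r\epsilon^2=\pm\sh$ with $\sh=r'\epsilon^2$, $r'=2r$; hence $g(U)\subseteq D_1$, $g(W)\subseteq D_2$. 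As $\Delta'<\tfrac13+\tfrac1{128}$ and the vertical spread is $O(\epsilon^2)$, each of $D_1,D_2$ has diameter below $1$, so all pairs within $U$ (resp.\ $W$) are at distance at most $1$, matching the cliques of $G$.

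The heart of the argument is the cross-part check: for $u\in U$, $w\in W$, that $\dist_g(u,w)\le 1$ iff $uw\in E$, with the quantitative gap of property~(2). I would split on the bases $\hat u=\pi(u)$, $\hat w=\pi(w)$. In \textbf{Case 1}, $\{\hat u,\hat w\}=\{a_i,b_i\}$ are the endpoints of a duplicated parallel edge $e_i$, so $u=a_i^{\,j}$, $w=b_i^{\,j'}$. Conditions (1)--(6) make $a_i^{\,j}b_i^{\,j}$ a perpendicular translate of $a_ib_i$ by $jt_1\epsilon$, so $a_i^{\,j}-b_i^{\,j'}=(a_i-b_i)+(j-j')t_1\epsilon\,\mathbf n$ with orthogonal summands and $|a_i-b_i|=1$, whence
\[
	\dist_g(a_i^{\,j},b_i^{\,j'})=\sqrt{1+(j-j')^2 t_1^2\epsilon^2}.
\]
This is $1$ exactly when $j=j'$, i.e.\ exactly when $a_i^{\,j}\sim b_i^{\,j'}$; for $j\ne j'$ it equals $1+\frac{(j-j')^2t_1^2\epsilon^2}{\sqrt{1+(j-j')^2t_1^2\epsilon^2}+1}\ge 1+\frac{t_1^2\epsilon^2}{4}=1+q'\epsilon^2$, since $t_1=\tfrac{q}{64\sqrt r\,k}$ gives $\tfrac14 t_1^2=q'$ — this is precisely the origin of the constant $q'=\tfrac{q^2}{64^2 r\cdot 4k^2}$. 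In \textbf{Case 2}, $\{\hat u,\hat w\}$ is not such a pair, and twin-agreement gives $u\sim_G w\iff\hat u\sim_H\hat w$. If both $u,w$ are old, then $\dist_g=\dist_h$ and the claim follows from $h$ being a representation together with Theorem~\ref{lem:basic_rep}(2) (a distance of exactly $1$ lands in the equality branch). If at least one is new, say $\hat u=a_i$ is an endpoint of a duplicated edge, then $\hat w\ne b_i$, so Theorem~\ref{lem:basic_rep}(3) forces $\dist_h(\hat u,\hat w)\ne 1$, hence $|\dist_h(\hat u,\hat w)-1|\ge q\epsilon^2$ by Theorem~\ref{lem:basic_rep}(2); this is exactly the hypothesis of Lemma~\ref{maintwin}, which transfers both the side of $1$ and the gap, yielding $\dist_g(u,w)>1\iff\hat u\not\sim_H\hat w$ and $|\dist_g(u,w)-1|\ge q\epsilon^2/2$.

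I expect the one genuine obstacle to be arranging Case~2 so that Lemma~\ref{maintwin} always applies: the lemma fails exactly when a base pair sits at distance $1$ without being a duplicated edge, and it is Theorem~\ref{lem:basic_rep}(3) — certifying that the only distance-$1$ pairs are the parallel edges and that a parallel-edge endpoint has no other distance-$1$ neighbour — that excludes this whenever a new vertex is present. To finish property~(2) I would note that $q\le r$ gives $q'\le q/2$ with ample room, so the gap $q\epsilon^2/2$ furnished by Lemma~\ref{maintwin} in Case~2 dominates the required $q'\epsilon^2$, while Case~1 supplies exactly $q'\epsilon^2$; combined with property~(1), this establishes both conclusions.
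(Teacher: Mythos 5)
Your proof is correct and takes essentially the same route as the paper's: property (1) via Lemma~\ref{twinlemma}, pairs lying on the same duplicated ladder by the explicit computation $\dist_g(a_i^j,b_i^{j'})=\sqrt{1+(j-j')^2t_1^2\epsilon^2}$ (which is exactly where $q'$ comes from), and all remaining cross-part pairs by feeding Theorem~\ref{lem:basic_rep}(2)--(3) into Lemma~\ref{maintwin}. If anything, your write-up is slightly more careful than the paper's: by letting $j,j'$ range down to $0$ in Case~1 you correctly cover pairs such as $(a_i^l,b_i)$, which the paper nominally assigns to Lemma~\ref{maintwin} even though that lemma's hypothesis fails when the base pair is at distance exactly $1$, and you also make explicit the clique (within-part) check and the twin-based transfer of adjacency, which the paper leaves implicit.
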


\begin{proof}
The condition (2) is satisfied for all the vertices of $H$ by Theorem~\ref{lem:basic_rep}.  
Further, by Lemma~\ref{maintwin}, the condition is satisfied between any new vertex and 
a vertex of $H$, or between two new vertices that are twins of vertices in different parallel edges. 
So we only need to consider pairs of new vertices $a_i^l$, $b_i^m$, $l, m \in \{1,2, \ldots, k_i\}$ 
that are twins to two vertices of the same parallel edge $a_ib_i$. In this case, clearly, the distances 
that are not equal to $1$ are at least 
$$\sqrt{1+\left(\frac{t}{k}\epsilon\right)^2} \geq 1+\frac{t^2}{4k^2}\epsilon^2 =1 + \frac{q^2}{64^2r\times4k^2}\epsilon^2.$$

The condition (1) is clearly satisfied for the representation $h$ of $H$, and by Lemma~\ref{twinlemma},
we can get out of the strip horizontally by at most $t\epsilon<\epsilon$ and vertically by at most $4t\sqrt{r}\epsilon^2<r\epsilon^2$.
This completes the proof. 
\end{proof}

As for any basic graph $G \in \mathcal{X}^*$ we have a UDG-representation satisfying the conditions of
Theorem~\ref{lem:basic_rep}, Theorem~\ref{basicgeneral} shows that every 
%$C_4^*$-free co-bipartite 
graph in $\mathcal{X}^*$ is a UDG. 
Moreover, the representation has several properties, that allow us to transform these 
UDG-representations to UDG-representations of the bipartite complements of these graphs, which we will do 
in the next section. For completeness we state the result for general graphs in $\mathcal{X}^*$. 

%\begin{theorem} \label{c4representation}
%Let $G \in \mathcal{X}$, then $G^*$ has a unit disk represention $g$. In particular, we can fix arbitrary positive $\Delta \in (0, \frac{1}{3})$ and fix $q''=\frac{1}{64^4 \times 200 n^2}$ with $n=|V(G)|$. Then, for sufficiently small $E$ obtain a unit disk representation of co-bipartite $G^*$ 
%(with parts $U$, $W$) such that:
%
%\begin{itemize}
%\item[(1)] $g[U] \subseteq D_1$, $g[W] \subseteq D_2$ where $D_1=[0, \Delta] \times [E,E]$, $D_2=[0,\Delta] \times [1-E, 1+E]$. 
%\item[(2)]
%For every $u \in U$, $w \in W$, we have either $\dist_g(u, w)=1$ or $|\dist_g(u, w)-1| \geq q''E$.
%\end{itemize}
%\end{theorem}   

\begin{theorem} \label{c4representation}
Let $G=(U,W,E)_c$ be an $n$-vertex graph in $\mathcal{X}^*$.
Then for every sufficiently small $\sh$ there exists $\Delta' \in (0, 1/3)$, and a UDG-representation
$g$ of $G$ possessing the following properties:

\begin{itemize}
	\item[(1)] $g(U) \subseteq D_1$, $g(W) \subseteq D_2$, where $D_1=[0, \Delta'] \times [-\sh,\sh]$, 
	$D_2=[0,\Delta'] \times [1-\sh, 1+\sh]$. 
	\item[(2)] For every $u \in U$, $w \in W$, we have either $\dist_g(u, w)=1$ or $|\dist_g(u, w)-1| \geq q''\sh$,
	where $q''=\frac{1}{64^4 \times 200 n^2}$.
\end{itemize}
\end{theorem}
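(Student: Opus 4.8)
The plan is to derive this statement as a rescaled instance of Theorem~\ref{basicgeneral}, the only real work being to invert the relation between the scale parameter $\epsilon$ and the strip width, and then to track the constants. First I would invoke the structural description of $\mathcal{X}^*$: by Theorem~\ref{th:Xstruct} together with the reduction to generalized hexagonal caterpillars, there is a basic graph $H \in \mathcal{X}^*$ such that $G$ is obtained from $H$ by duplicating some of its parallel edges. Recording the relevant sizes, $|V(H)| \leq n$, and the total number $k$ of twin edges added satisfies $k \leq n$ (indeed each twin edge contributes two new vertices, so even $2k \leq n$).

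Next, given a target width $\sh > 0$, I would set $\epsilon = \sqrt{\sh/(2r)}$, where $r = 5$ and $q = \tfrac{1}{64}$ are the constants furnished for the basic graph $H$ by Theorem~\ref{lem:basic_rep}. For all sufficiently small $\sh$ this $\epsilon$ satisfies $\epsilon < \min\{\tfrac{1}{15n}, \tfrac{1}{128}\}$, and moreover the resulting horizontal extent $\Delta' = \Delta + \epsilon$ stays below $\tfrac13$; both requirements hold simply because $\epsilon \to 0$ as $\sh \to 0$. With this choice of $\epsilon$ I would apply Theorem~\ref{lem:basic_rep} to $H$ to obtain a representation $h$, and then Theorem~\ref{basicgeneral} to extend $h$ to a UDG-representation $g$ of $G$.

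It then remains to read off the two asserted properties from the conclusion of Theorem~\ref{basicgeneral}. For property~(1): the strip width there equals $r'\epsilon^2 = 2r\epsilon^2 = 2r \cdot \tfrac{\sh}{2r} = \sh$, so $g(U) \subseteq D_1$ and $g(W) \subseteq D_2$ with exactly the prescribed boxes, while $\Delta' = \Delta + \epsilon \in (0, 1/3)$ as arranged above. For property~(2): Theorem~\ref{basicgeneral} gives a gap of $q'\epsilon^2$ with $q' = \tfrac{q^2}{64^2 r \cdot 4k^2} = \tfrac{1}{64^4 \cdot 20 k^2}$ (using $q = \tfrac{1}{64}$, $r = 5$). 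Substituting $\epsilon^2 = \sh/(2r) = \sh/10$ gives $q'\epsilon^2 = \tfrac{\sh}{64^4 \cdot 200 k^2} \geq \tfrac{\sh}{64^4 \cdot 200 n^2} = q''\sh$, where the inequality uses $k \leq n$; this is precisely the claimed bound.

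The argument is essentially pure bookkeeping, so the only ``obstacle'' is to verify that no constant is dropped: the substitution $\epsilon = \sqrt{\sh/(2r)}$ must make the width come out to exactly $\sh$ while the gap stays at least $q''\sh$, which forces the specific value $q'' = \tfrac{1}{64^4 \cdot 200 n^2}$ once $k \leq n$ is used. The one genuinely qualitative point is that the phrase ``for every sufficiently small $\sh$'' is exactly what licenses the inequalities $\epsilon < \min\{\tfrac{1}{15n},\tfrac{1}{128}\}$ and $\Delta + \epsilon < \tfrac13$ needed to apply the earlier theorems; no new geometry is required, since all of it is already contained in Theorems~\ref{lem:basic_rep} and~\ref{basicgeneral}.
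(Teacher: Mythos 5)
Your proposal is correct and follows essentially the same route as the paper's own proof: both derive the statement from Theorem~\ref{basicgeneral} by substituting $q=\tfrac{1}{64}$, $r=5$, using $\sh = r'\epsilon^2 = 2r\epsilon^2$ to convert the gap $q'\epsilon^2$ into $\tfrac{q'}{r'}\sh = \tfrac{\sh}{64^4\cdot 200k^2} \geq q''\sh$ via $k \leq n$, and noting $\Delta' = \Delta + \epsilon < \tfrac13$ for small $\epsilon$. The only (harmless) difference is that you explicitly invert the relation by setting $\epsilon = \sqrt{\sh/(2r)}$ for a given target $\sh$, a step the paper leaves implicit.
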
   

\begin{proof}
Let $g$ be a UDG-representation of $G$ with parameters $\epsilon, r', q', \Delta',\sh$ guaranteed by 
Theorem \ref{basicgeneral}.
First we can assume that $\Delta' \in (0,1/3)$, which is true for every sufficiently small $\epsilon$,
since $\Delta' = \Delta+\epsilon$ and $\Delta \in (0,1/3)$.
Now for arbitrary $u \in U$ and $w \in W$, if $\dist_g(u, w) \neq 1$, then
$|\dist_g(u, w)-1| \geq q'\epsilon^2$ and from $\sh=r'\epsilon^2$ we derive

$$
	|\dist_g(u, w)-1| \geq \frac{q'}{r'} \sh = \frac{q^2}{64^2r\times4k^2 \times 2r} \sh \geq
	\frac{1}{64^4 \times 200 n^2} \sh = q'' \sh,
$$
where we take $q=\frac{1}{64}$ and $r=5$, which is eligible as shown in the proof of Theorem~\ref{lem:basic_rep}.
%
%We choose $\epsilon$ small enough to satisfy the conditions of Theorem~\ref{lem:basic_rep} and fit in the strip of width $\Delta$. Setting $E=r'\epsilon^2$ as in Theorem~\ref{basicgeneral} gives us a representation which satisfies $(1)$. We let $q''= \frac{q^2}{64^2r \times 4n^2 \times 2r} \leq \frac{q^2}{64^2r \times 4k^2 \times 2r} = \frac{q'}{r'}$ and insert values $q=\frac{1}{64}$ and $r=5$ from Theorem~\ref{lem:basic_rep}, to obtain $q''=\frac{1}{64^4 \times 200 n^2}$. As $q'' \leq \frac{q'}{r'}$, now (2) of Theorem~\ref{basicgeneral} implies the condition (2) of this Theorem. 
\end{proof}

%%%%%%%%%%%%%%%%%%%%                      
% SELF-COMPLEMENTARITY TRANSFORMATION   
%%%%%%%%%%%%%%%%%%

%%%%%%%%%%%%%%%%%%%%%%%%%%%%%%%%%%%%
%%%%%%%%%%%%%%%%%%%%%%%%%%%%%%%%%%%%
\subsection{On bipartite self-complementarity of the class of co-bipartite UDGs}\label{ss:comp}
%%%%%%%%%%%%%%%%%%%%%%%%%%%%%%%%%%%%
%%%%%%%%%%%%%%%%%%%%%%%%%%%%%%%%%%%%
%\subsection{On self-complementarity : from $G^*$ to $\overline{G}$}\label{ss:comp}
 
Notice that all the forbidden subgraphs (and, more generally, substructures) for co-bipartite unit disk graphs, 
which were revealed in Sections \ref{sec:tools} and \ref{sec:forb} are self-complementary in bipartite
sense, i.e. if $G$ is a bipartite graph and $G^*$ is a forbidden subgraph, then $\overline{G}$ is also 
a forbidden subgraph. This in turn motivates to explore, whether the class of co-bipartite UDGs 
is indeed self-complementary in bipartite sense. 
In this section, we show that if a UDG-representation of a co-bipartite graph $G^{*}$ satisfies certain
conditions, then it can be transformed into a UDG-representation of the graph $\overline{G}$. 
Loosely speaking, the conditions tells us that the parts of $G^*$ are mapped into two narrow strips
being distance approximately 1 away from each other.
%This result holds when the cliques of $G^*$ are represented within two narrow strips being a distance 
%approximately 1 away. 
In the next section we will apply this result to show that the bipartite complement of a $C_4^*$-free 
co-bipartite UDG is also UDG. This will settle the fact that $\mathcal{Z} = \overline{\mathcal{X}}$.
%obtain a UDG-representations for all the $2K_2$-free co-bipartite unit disk graphs.

In this section we will often use polar coordinates. Let us recall that a point $(r,\alpha)_p$ in polar coordinates 
is a point $(r\cos(\alpha), r\sin(\alpha))$ in standard Cartesian coordinates. 
We begin by describing the transformation. For this we fix $0<\sh<\frac{1}{12}$ and $0<\Delta<\frac{1}{3}$ and
let $D_1=[0, \Delta] \times [-\sh, \sh] \subseteq \mathbb{R}^2$, 
$D_2=[0, \Delta] \times [1-\sh,1+ \sh] \subseteq \mathbb{R}^2$.
Let $D=D_1 \cup D_2$ be the domain where the points of the representation of $G^*$ lie. 
The transformation $\tau: D \rightarrow \mathbb{R}^2$ is defined as follows 
(see Figure \ref{fig:tau_transformation} for illustration):

$$
\text{for all $\alpha \in [0,\Delta]$ and $y \in [-\sh, \sh]$, define }
\begin{cases}
\tau(\alpha, y):=\left(\frac{1}{2}+y, -\frac{\pi}{2}+2\alpha\right)_p & \\
\tau(\alpha, 1+y):=\left(\frac{1}{2}-y, \frac{\pi}{2}+2\alpha\right)_p & 
\end{cases}
$$

Notice first that this transformation maps set of points on a horizontal line to a line through $(0,0)$.  
That is for a fixed $\alpha$ the points $D_1(\alpha)=\{(\alpha, y_1): y_1 \in [-\sh, \sh]\}$ and 
$D_2(\alpha)=\{(\alpha, 1+y_2): y_2 \in [-\sh, \sh]\}$ are mapped to the line 
$$
	L(\alpha)=\left\{\left(y,\frac{\pi}{2}+2\alpha\right)_p: y \geq 0\right\} \bigcup 
	\left\{\left(y,-\frac{\pi}{2}+2\alpha\right)_p: y > 0\right\}.
$$ 
To closer examine what happens on the line, take two points $A = (\alpha, y_1) \in D_1(\alpha)$ and 
$B=(\alpha, 1+y_2) \in D_2(\alpha)$ for some $y_1, y_2 \in [-\sh, \sh]$. 
Then $\dist(A, B)=1+y_2-y_1$ and $\dist(\tau(A), \tau(B))=\frac{1}{2}+y_1+\frac{1}{2}-y_2=1+y_1-y_2$. 
Therefore

\begin{itemize}
	\item[-] if $y_1-y_2=0$, then $\dist(A, B)=1$ and $\dist(\tau(A), \tau(B))=1$;
	\item[-] if $y_1-y_2=a>0$ then  $\dist(A, B)=1-a<1$ and $\dist(\tau(A), \tau(B))=1+a>1$;
	\item[-] if $y_1-y_2=-a<0$ then  $\dist(A, B)=1+a>1$ and $\dist(\tau(A), \tau(B))=1-a<1$.
\end{itemize}

\begin{figure}[!h]
      	\begin{subfigure}[t]{0.4\textwidth}
    		
       	\includegraphics[scale=1.2]{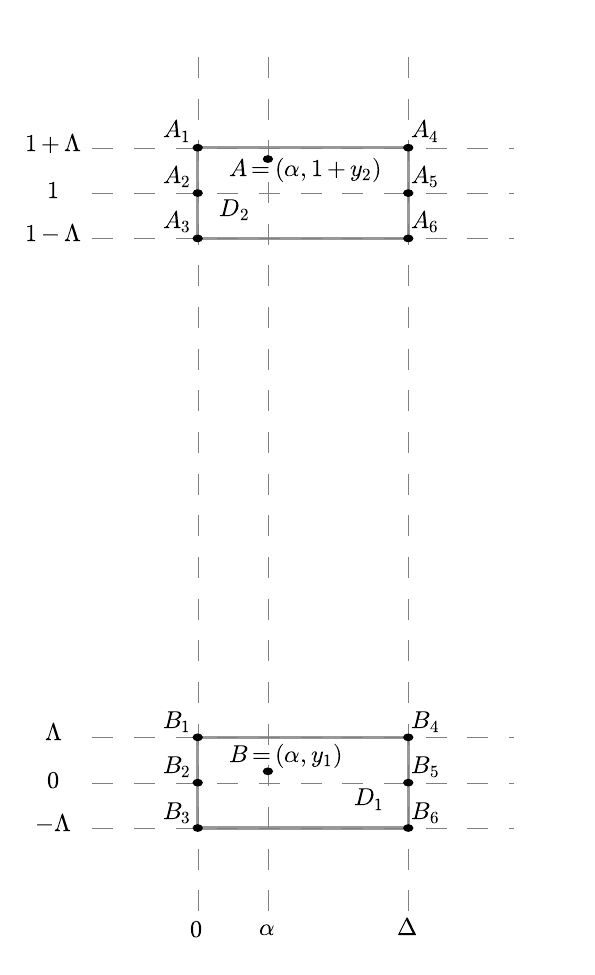}
       	\caption{}
		\label{fig:udg_reg_G_star}
	\end{subfigure}
	~
	\begin{subfigure}[t]{0.6\textwidth}
		
       	\includegraphics[scale=1.2]{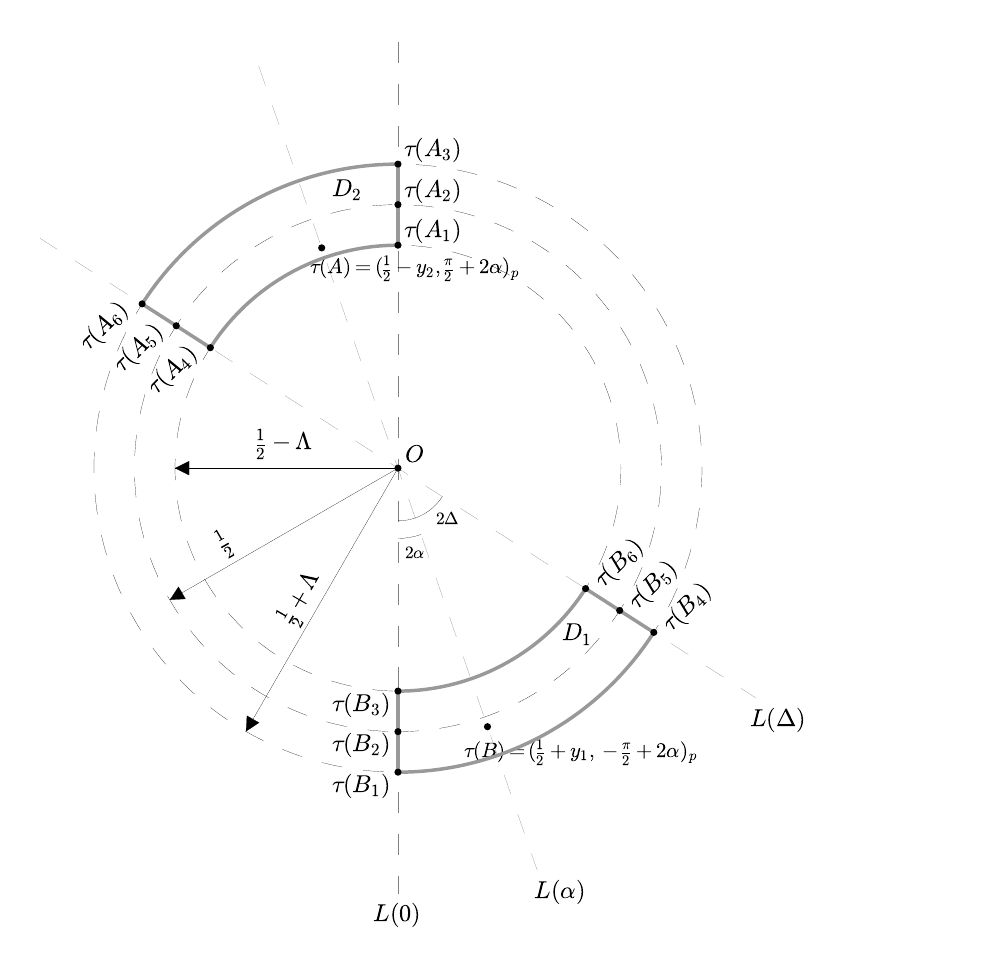}
              \caption{}
		\label{fig:udg_reg_G_compl}
	\end{subfigure}

    	\caption{Transformation $\tau$}
    	\label{fig:tau_transformation}
\end{figure}

\noindent
Thus, for two points $A \in D_1(\alpha)$, $B \in D_2(\alpha)$ on the same horizontal line but in different parts, transformation $\tau$ swaps the distances that are less than 1 with the distances that are greater than 1, i.e. $\dist(\tau(A), \tau(B))>1$ iff  $\dist(A, B)<1$ and $\dist(\tau(A), \tau(B))<1$ iff  $\dist(A, B)>1$. Further, one can easily see that both $\tau(D_1)$ and $\tau(D_2)$ have diameter less than 1. Thus, if we have a 
UDG-representation $f$ of some co-bipartite graph $G^*$ which lies on one horizontal line, i.e. 
$f(G^*) \subseteq D_1(\alpha) \cup D_2(\alpha)$ and avoids distances equal to one, then the map 
$\tau \comp f$ is a UDG-representation of $\overline{G}$. 

We would like to extend this argument to the whole set $D_1 \cup D_2$. However, not all the distances, between the points in different parts $D_1$ and $D_2$, which are less than 1 will be swapped with distances that are greater than 1 by map $\tau$. Nevertheless, in the lemma below we will show that the distances that are smaller than $1- 100\sh^2$ or greater than $1 + 100\sh^2$ are mapped to distances greater than 1 or smaller than 1, respectively.  Thus, if $G^*$ has a UDG-representation $g$ with $g(G^*) \subseteq D_1 \cup D_2$ 
such that no distance lies in the interval $[1-100\sh^2, 1+100\sh^2]$, then $\tau \comp g$ is a 
UDG-representation of $\overline{G}$. Furthermore, it is worth noting that the distances of size 1 can be avoided by appropriate scaling of the initial UDG-representation, as we will see later. 
%In the next section we will use scaling and transformation $\tau$ to deduce a unit disk representation of $\overline{G}$ from the representations of $G^*$ obtained in the previous section for all $G \in \mathcal{X}$. 
Now we are ready to prove the main result of this section. 

\begin{lemma}\label{lem:transf}
Let $D_1$ and $D_2$ be as described above. Suppose $G^*$ admits a UDG-representation $g$, 
such that $g(V(G^*)) \subseteq D_1 \cup D_2$ and for all $x \in D_1 \cap g(V(G^*)), y \in D_2 \cap g(V(G^*))$, 
$\dist(x,y) \notin [1-100\sh^2, 1+100\sh^2]$. Then $\tau \comp g$ is a UDG-representation of $\overline{G}$.
\end{lemma}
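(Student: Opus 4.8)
The plan is to read off exactly what $\tau\comp g$ must satisfy and then reduce everything to a single two–point distance computation. Since $G$ is bipartite with parts $U,W$, its complement $\overline{G}$ is co-bipartite on the same parts: $U$ and $W$ are cliques, and for $u\in U$, $w\in W$ the pair $uw$ is an edge of $\overline{G}$ exactly when $uw\notin E$, i.e.\ exactly when $\dist_g(u,w)>1$. I may assume $g(U)\subseteq D_1$ and $g(W)\subseteq D_2$. Thus $\tau\comp g$ is a UDG-representation of $\overline{G}$ iff (a) each of $\tau(D_1)$, $\tau(D_2)$ has diameter at most $1$, so that the two cliques are correctly represented, and (b) for every $u\in U$, $w\in W$ one has $\dist(\tau(g(u)),\tau(g(w)))\le 1$ precisely when $\dist_g(u,w)>1$. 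Part (a) is easy: $\tau(D_i)$ is an annular sector with radii in $[\tfrac12-\sh,\tfrac12+\sh]$ and angular width $2\Delta$, and a direct estimate using $\sh<\tfrac1{12}$ and $\Delta<\tfrac13$ bounds its diameter well below $1$ (this is the fact already observed just before the lemma).

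The heart is part (b). Here I would fix $x=g(u)=(\alpha_1,y_1)\in D_1$ and $y=g(w)=(\alpha_2,1+y_2)\in D_2$, and set $\phi=\alpha_1-\alpha_2$, $\eta=y_1-y_2$, $r_1=\tfrac12+y_1$, $r_2=\tfrac12-y_2$, so that $|\phi|\le\Delta$ and $|\eta|\le 2\sh$. Since the two polar directions of $\tau(x)$ and $\tau(y)$ differ by $\pi-2\phi$ and $r_1+r_2=1+\eta$, the Law of cosines gives the two clean identities
\[
\dist(x,y)^2-1=\phi^2-\eta(2-\eta),\qquad \dist(\tau(x),\tau(y))^2-1=\eta(2+\eta)-4r_1r_2\sin^2\phi,
\]
together with $4r_1r_2=1+2\eta-4y_1y_2\in[\,1+2\eta-4\sh^2,\ 1+2\eta+4\sh^2\,]$. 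These already display the mechanism from the preceding discussion: on a common horizontal line ($\phi=0$) the two excesses are $-\eta(2-\eta)$ and $\eta(2+\eta)$, of opposite sign.

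Writing $m:=100\sh^2$, the argument splits on the hypothesis $\dist_g(u,w)\notin[1-m,1+m]$. If $\dist_g(u,w)<1-m$ (so $uw\notin E(\overline{G})$, and I must show the image distance exceeds $1$), the first identity gives $\eta(2-\eta)>\phi^2+m(2-m)$, whence $\eta>0$; bounding $4r_1r_2\sin^2\phi\le(1+2\eta+4\sh^2)\phi^2$ via $\sin^2\phi\le\phi^2$, the target $\eta(2+\eta)>4r_1r_2\sin^2\phi$ reduces to the polynomial inequality $(1+2\eta+4\sh^2)\,m(2-m)>2\eta^2(1-\eta)+4\sh^2\eta(2-\eta)$, which holds because $m(2-m)>130\sh^2$ dominates the right side ($<10\sh^2$ for $\eta\le2\sh$, $\sh<\tfrac1{12}$); this is precisely where the constant $100$ is used. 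If $\dist_g(u,w)>1+m$ (so $uw\in E(\overline{G})$, and I must show the image distance is below $1$), the constraint is $\phi^2>\eta(2-\eta)+m(2+m)$. When $\eta\le 0$ the transformed excess $\eta(2+\eta)-4r_1r_2\sin^2\phi$ is $\le 0$, with the borderline $\eta=0$ forcing $\phi\neq0$, so it is strictly negative; hence I may assume $\eta>0$.

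The delicate point, which I expect to be the main obstacle, is this last subcase, because the naive bound $\sin^2\phi\ge c\,\phi^2$ with a fixed $c<1$ on $[0,\tfrac13]$ is too lossy — it produces a term of order $\sh$ that swamps the $O(\sh^2)$ margin. The remedy is a short bootstrap: assuming for contradiction that the image distance is $\ge1$ forces $4r_1r_2\sin^2\phi\le\eta(2+\eta)=O(\sh)$, so $\sin^2\phi$, and therefore $\phi^2$, are themselves $O(\sh)$; then $\sin^2\phi\ge\phi^2(1-\phi^2/3)\ge\phi^2(1-O(\sh))$ is essentially tight. Feeding this near-tight bound together with $4r_1r_2\ge 1+2\eta-4\sh^2$ into the constraint $\phi^2>\eta(2-\eta)+m(2+m)$ collapses everything to an inequality whose leading part reads $2\eta^2>4\eta^2+(1+2\eta)\,m(2+m)$ — impossible since the right side is strictly larger — and the $O(\sh)$ corrections change this only by terms of size $O(\sh^2)$ with small constants, which are absorbed by the margin $m(2+m)\ge 200\sh^2$ (equivalently, the resulting quadratic in $\eta$ has negative discriminant thanks to the coefficient $100$). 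This contradiction yields $\dist(\tau(x),\tau(y))<1$. Combining the two cases with part (a) gives $\dist(\tau(g(u)),\tau(g(w)))\le1 \iff \dist_g(u,w)>1 \iff uw\in E(\overline{G})$, so $\tau\comp g$ is a UDG-representation of $\overline{G}$.
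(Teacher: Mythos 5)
Your proof is correct, and it reaches the conclusion by a genuinely different route than the paper. The paper argues geometrically: after using the rotational symmetry to place one point at angle $0$, it introduces for each angle $\beta$ two threshold points --- $\CC$, at distance exactly $1$ from $A$, and $\FF'$, whose $\tau$-image is at distance exactly $1$ from $\tau(A)$ --- so that the only positions of $B$ for which $\tau$ fails to invert adjacency are those lying between $\CC$ and $\FF'$. It then estimates both thresholds by Taylor bounds (solving a quadratic from the Law of cosines to control $\ff$) and splits on $\beta$: for $\beta>\sqrt{6\sh}$ the failure window lies entirely above the strip, while for $\beta\le\sqrt{6\sh}$ its length satisfies $|\ff-\cc|\le 100\sh^2$, so any non-inverted pair has its original distance in the excluded interval. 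You instead verify the inversion statement directly from the two exact identities $\dist(x,y)^2-1=\phi^2-\eta(2-\eta)$ and $\dist(\tau(x),\tau(y))^2-1=\eta(2+\eta)-4r_1r_2\sin^2\phi$ (both of which I checked), splitting on which side of $1$ the original distance lies and reducing each case to a polynomial inequality. Your bootstrap in the hard case ($\dist_g(u,w)>1+m$ with $\eta>0$) --- using the contradiction hypothesis itself to force $\phi^2=O(\sh)$, so that $\sin^2\phi\ge\phi^2(1-\phi^2/3)$ loses only $O(\sh^2)$ --- is the exact counterpart of the paper's case split at $\beta=\sqrt{6\sh}$: both devices keep the quartic error at order $\sh^2$ rather than $\sh$. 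I verified your constants: in the first case $m(2-m)>130\sh^2$ indeed dominates the error terms (at most $8\sh^2+16\sh^3<10\sh^2$), and in the second the corrections total roughly $64\sh^2$, comfortably below the $200\sh^2$ margin, given $\eta\le 2\sh$, $\sh<1/12$, $|\phi|<1/3$. What the paper's method buys is a clean geometric description of exactly which distances $\tau$ fails to invert, independent of any hypothesis on $g$, which also explains the provenance of the constant $100$; what yours buys is a shorter, purely computational argument that never needs to solve for or estimate $\ff$. One shared subtlety: your opening assumption $g(U)\subseteq D_1$, $g(W)\subseteq D_2$ is not literally in the lemma's hypothesis, but it is how the lemma is meant to be read and used (Theorem~\ref{c4representation} supplies exactly such a representation), and the paper's own proof relies on the same implicit assumption when it disposes of within-part pairs via the diameter bound on $\tau(D_1)$ and $\tau(D_2)$.
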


\begin{proof}
We will prove the lemma by showing that for any two points $A=(\alpha, 1+a)$, $B=(\beta, b)$, with $\alpha, \beta \in [0, \Delta]$ and $a, b \in [-\sh, \sh]$ the following statement holds:
\begin{itemize}
	\item[($\star$)] if $\dist(A,B) < 1-100\sh^2$ or  $\dist(A,B) > 1+100\sh^2$, then $\dist(\tau(A), \tau(B)) >1$ 
	or $\dist(\tau(A), \tau(B)) <1$, respectively.   
\end{itemize}

\noindent
First we observe that it is enough to show the statement ($\star$) for all pairs $A, B$ as above with $\alpha=0$. Indeed, let $\alpha'=\min(\alpha, \beta)$ and $\beta'=\max(\alpha, \beta)$ and let $A'=(\alpha', 1+a)$, $B'=(\beta',b)$.
It is not hard to see that $\dist(A', B')=\dist(A, B)$ and $\dist(\tau(A'), \tau(B'))=\dist(\tau(A),\tau(B))$. 
Further, let $\beta''=\beta'-\alpha'$ and let $A''=(0, 1+a)$, $B''=(\beta'', b)$. Again, it is not hard to see that $\dist(A', B')= \dist(A'',B'')$ and $\dist(\tau(A'), \tau(B'))=\dist(\tau(A''),\tau(B''))$, because horizontal shifting by distance $\alpha''$ and rotating around the origin by angle $2\alpha''$ are both isometries of the plane. 
Thus, the pair $A,B$ satisfies ($\star$) iff the pair $A'', B''$ satisfies ($\star$). 
%Hence, it is enough to show (*) for every pair $(A'', B'')$ or equivalently for every pair $(A,B)$ with $\alpha=0$ (as $\beta'' \in [0, \Delta]$). 
Hence from now onwards we will assume $A=(0, 1+a), B=(\beta, b)$, with 
$\beta \in [0, \Delta]$ and $a, b \in [-\sh, \sh]$.   

Consider a special point $\CC=(\beta, \cc)$ with 
$\cc=\cc(\beta, a)<1$ such that $\dist(A, \CC)=1$ 
(see Figure \ref{fig:specialpointsgen}). This point is an intersection of the vertical line going through $(\beta, 0)$ and a unit circle centered at $A$ and one can easily calculate that $\cc=a+1-\sqrt{1-\beta^2}$. 
The importance of the point $\CC$ is that the distance between $A$ and a point 
$B=(\beta, b)$ is greater or smaller than 1 depending on whether $B$ is below (i.e. $b < \cc$)
or above (i.e. $b > \cc$) $\CC$, respectively. 

Similarly, consider a special point $\FF$ which lies on the ray 
$R(\beta)=\left\{\left(r,-\frac{\pi}{2}+2\beta\right)_p: r \geq 0\right\} \subseteq L(\beta)$ and is distance 1 away from $\tau(A)=\left(0, \frac{1}{2}-a\right)$. Such a point exists and is unique because the unit cycle centered at $\left(0, \frac{1}{2}-a\right)$ 
contains the origin O - the endpoint of the half-line. We denote the distance 
$\ff=\ff(\beta,a)=\dist(O,\FF)-\frac{1}{2}$.
The importance of the point $\FF$ is that it divides the ray $R(\beta)$ into two segments:
the points $\left(\frac{1}{2}+b,-\frac{\pi}{2}+2\beta\right)_p$ have distance less or more than 1 from $\tau(A)$ depending on whether $b<\ff$ or $b>\ff$, respectively. 
Let $\FF'=(\beta, \ff)$. 
As $\left(\frac{1}{2}+b,-\frac{\pi}{2}+2\beta\right)_p=\tau(\beta, b)$ for any $b \in [-\sh, \sh]$, we deduce that
$\dist(\tau(B), \tau(A))$ is greater or smaller than 1 depending on whether $B$
lies above or below the point $\FF'$, respectively.  
 
From the above discussion we deduce the following important criterion. 
If $b>\max\{\ff(\beta,a), \cc(\beta,a)\}$, then $\dist(A, B)<1$, and $\dist(\tau(A), \tau(B))>1$. Similarly, if $b<\min\{\ff(\beta,a), \cc(\beta,a)\}$, then $\dist(A, B)>1$, and $\dist(\tau(A), \tau(B))<1$. So, in both cases $B=(\beta, b)$ satisfies ($\star$). 
However, if $b \in [\min\{\ff, \cc\}, \max\{\ff, \cc\}]$, 
then the distances are not inverted by the map $\tau$, i.e. either $\dist(A,B)$ and $\dist(\tau(A), \tau(B))$ are both smaller or equal to 1 or both greater than 1. 
In what follows, we will show that in this case $\dist(A,B) \in [1-100\sh^2, 1+100\sh^2]$. 
In order to do so, we will estimate values of $\ff$ and $\cc$ more precisely. 

As we observed earlier $\cc=a+1-\sqrt{1-\beta^2}$. We can approximate the root part of the equation as follows: $1-\frac{\beta^2}{2}-\frac{\beta^4}{2} \leq \sqrt{1-\beta^2} \leq 1-\frac{\beta^2}{2}$. Hence, 

\begin{align*}
	a+\frac{\beta^2}{2} \leq \cc \leq a+\frac{\beta^2}{2}+\frac{\beta^4}{2}. 
\end{align*}

\noindent
For finding reasonable bounds of function $\ff$ the arguments are more involved, and we moved them
to Appendix \ref{app:LemTrans}, where we show
\begin{align*}
	a+ \frac{\beta^2}{2}-\frac{7\beta^4}{6}-2a^2\beta^2 \leq \ff \leq a+\frac{\beta^2}{2}+\frac{\beta^4}{2}.
\end{align*}

Having obtained these estimates, we are now ready to say something about \textit{non-invertible} points, that is the points $B=(\beta, b) \in D_1$ such that $\dist(A,B)$ and $\dist(\tau(A), \tau(B))$ are both greater or both smaller than 1. As we observed above, such $B$ must lie between $\FF'$ and $\CC$, i.e. must have 
$b \in [\min\{\ff,\cc\}, \max\{\ff,\cc\}]$. 
Further we consider two cases with respect to the value of $\beta$.

\begin{enumerate}
\item $\beta>\sqrt{6\sh}$. 
The obtained bounds on the functions $\ff$ and $\cc$ 
imply that 
\begin{align*}
	\min\{\ff(\beta,a), \cc(\beta,a)\} \geq & a+\frac{\beta^2}{2} -\frac{7\beta^4}{6}-2a^2 \beta^2
	\geq -\sh+\beta^2\left(\frac{1}{2}-\frac{7\beta^2}{6}-2\sh^2\right) \\ 
	\geq & -\sh+\beta^2\left(\frac{1}{2}-\frac{7}{6} \times \frac{1}{9}-\frac{2}{12^2}\right)
	\geq -\sh+\frac{\beta^2}{3} \\
	> & -\sh+\frac{\left( \sqrt{6\sh} \right)^2}{3} 
	= -\sh+2\sh =\sh. 
\end{align*}
Hence there is no point in 
$[\min\{\ff,\cc\}, \max\{\ff,\cc\}] \cap [-\sh, \sh]$,
which means that every point 
$B \in \beta \times [-\sh, \sh]$ satisfies ($\star$): $\dist(A,B)>1$ but $\dist(\tau (A), \tau(B))<1$. 

\item $\beta \leq \sqrt{6\sh}$. 
In this region, we have: 
\begin{align*}
|\ff(\beta, a) - \cc(\beta,a)| \leq & a+\frac{\beta^2}{2}+\frac{\beta^4}{2}-
					\left( a+\frac{\beta^2}{2}-\frac{7\beta^4}{6}-2a^2\beta^2 \right) \\
= &\frac{5\beta^4}{3}+2a^2\beta^2 \leq \frac{5\left( \sqrt{6\sh} \right)^4}{3}+ \frac{2\sh^2}{9}\\
= & \left(60+\frac{2}{9}\right)\sh^2 \leq 100 \sh^2
\end{align*}
If $B$ is non-invertible, then it satisfies 
$\min\{\ff,\cc\} \leq b \leq \max\{\ff,\cc\}$ and we have 
$\dist(B,\CC) \leq |\ff-\cc| \leq 100\sh^2$. The triangle inequalities 
$\dist(B,\CC) + \dist(A,B) \geq \dist(A,\CC)$ and 
$\dist(B,\CC)+\dist(A,\CC) \geq \dist(A,B)$ imply 
$$
	\dist(A,\CC)-\dist(B,\CC) \leq \dist(A,B) \leq \dist(A,\CC)+\dist(B,\CC),
$$ 
and as $\dist(A,\CC)=1$, we deduce 
$1- 100\sh^2 \leq \dist(A,B) \leq 1+100\sh^2$. This finishes the proof that any $A=(0,1+a) \in D_2$ and any $B=(\beta, b) \in D_1$ satisfies ($\star$) and hence the proof of the lemma. 
\end{enumerate}

\begin{figure}[!h]
      	\begin{subfigure}[t]{0.4\textwidth}
    		
       	\includegraphics[scale=1.2]{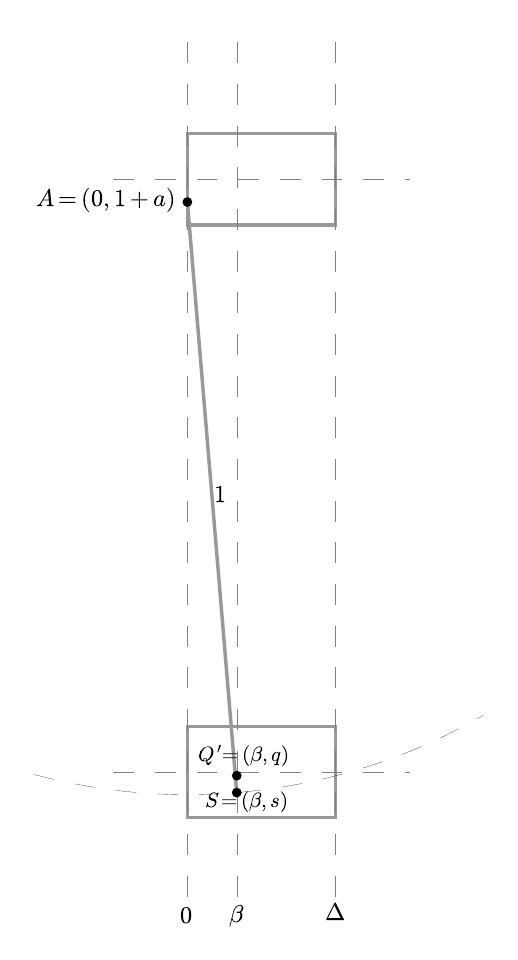}
       	\caption{}
		\label{fig:special_before}
	\end{subfigure}
	~
	\begin{subfigure}[t]{0.6\textwidth}
		
       	\includegraphics[scale=1.2]{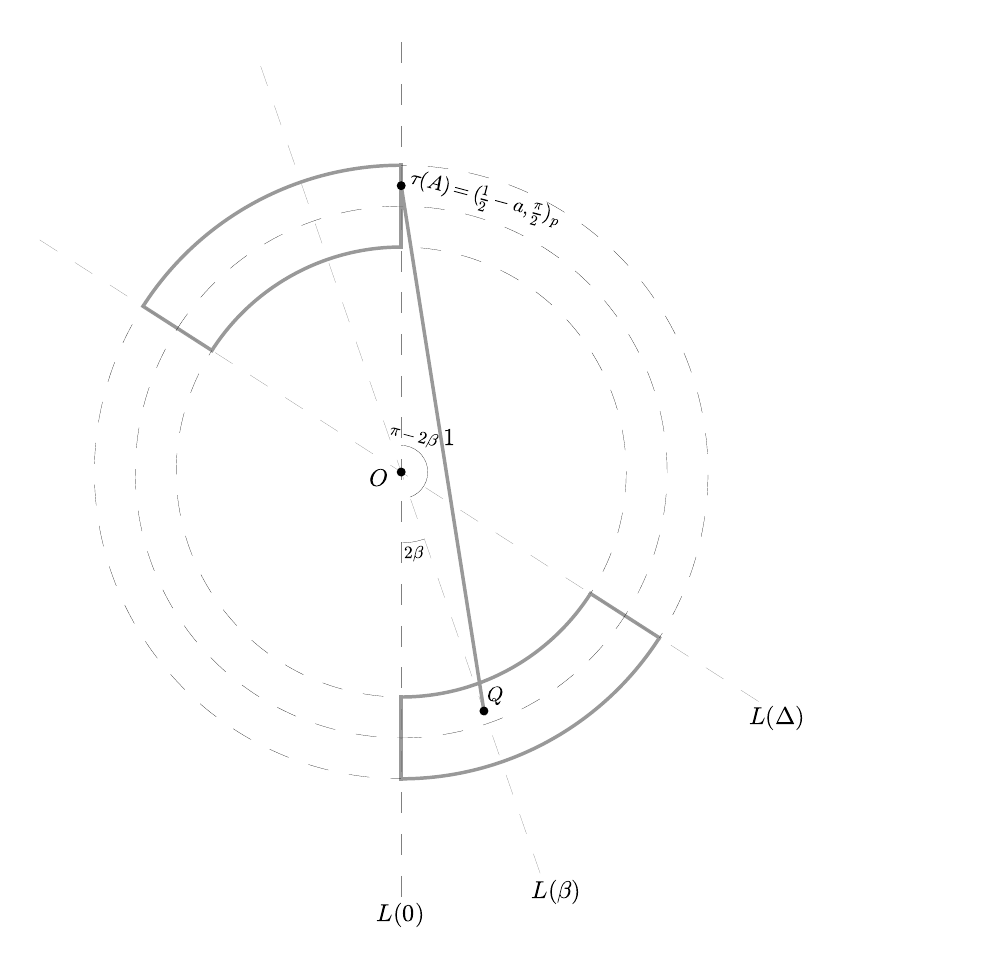}
              \caption{}
		\label{fig:special_after}
	\end{subfigure}

    	\caption{Special points $\CC$ and $\FF$ (here $a<0$)}
    	\label{fig:specialpointsgen}
\end{figure}

\end{proof}

%%%%%%%%%%%%%%%%%%%%                       2K_2-FREE CATERPILLAR   %%%%%%%%%%%%%%%%%%

\subsection{$2K_2$-free co-bipartite unit disk graphs}\label{ss:2K2rep}

Now we are ready to use the results of the above section to transform the representation
of a $C_4^*$-free co-bipartite unit disk graph into a representation of its bipartite complement, 
which is a $2K_2$-free co-bipartite graph.

\begin{theorem}
Let $G=(U,W,E)$ be a graph in $\mathcal{X}$. Then $\overline{G}$ is a UDG. 
\end{theorem}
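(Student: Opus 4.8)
The plan is to combine the representation of $G^*$ furnished by Theorem~\ref{c4representation} with the distance-inverting transformation $\tau$ of Lemma~\ref{lem:transf}. Since $G\in\mathcal X$ is bipartite, its complement $\overline G$ is exactly the bipartite complement of the co-bipartite graph $G^*=(U,W,E)_c$ (in each part the independent set becomes a clique, and the cross-edges are complemented), and this is precisely the graph that Lemma~\ref{lem:transf} produces from a suitable representation of $G^*$. Thus it suffices to build a UDG-representation $g$ of $G^*$ whose image lies in $D_1\cup D_2$ and for which no distance between the two parts falls in the forbidden window $[1-100\sh^2,\,1+100\sh^2]$; then $\tau\comp g$ represents $\overline G$.

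First I would invoke Theorem~\ref{c4representation} for $G^*\in\mathcal X^*$: for every sufficiently small $\sh$ it yields $\Delta'\in(0,1/3)$ and a representation $g_0$ with $g_0(U)\subseteq D_1$, $g_0(W)\subseteq D_2$ such that every cross pair has either $\dist_{g_0}=1$ or $|\dist_{g_0}-1|\ge q''\sh$ with $q''=\tfrac{1}{64^4\cdot 200\,n^2}$. Since the separation $q''\sh$ is linear in $\sh$ while the half-width $100\sh^2$ of the forbidden window is quadratic, for small $\sh$ every pair with $|\dist_{g_0}-1|\ge q''\sh$ already avoids the window. The one obstruction, and the crux of the proof, is the parallel edges: by property~(3) of Theorem~\ref{lem:basic_rep} their images sit at distance exactly $1$, i.e. in the middle of the window; moreover they are edges of $G^*$, hence non-edges of $\overline G$, so $\tau$ must send them to distance $>1$.

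I would remove this obstruction by a slight homothety. Fix $c=101$, set $\lambda=1-c\sh^2$ and $g:=\lambda g_0$ (scaling about the origin). For $\sh$ small enough $g$ is still a UDG-representation of $G^*$: same-part distances only shrink and stay below $1$, cross-edges stay below $1$, while cross-non-edges, being at least $\lambda(1+q''\sh)>1$ (this needs $q''\sh>c\sh^2$), remain above $1$. Crucially, the parallel edges now have length $\lambda=1-c\sh^2$. The homothety displaces the centre of the upper strip from $1$ to $\lambda$, but since $c\sh^2<\sh$ this displacement is absorbed by enlarging the half-width to $\sh'=\sh+c\sh^2$: a direct check gives $g(U)\subseteq[0,\lambda\Delta']\times[-\sh',\sh']$ and $g(W)\subseteq[0,\lambda\Delta']\times[1-\sh',1+\sh']$, with $\sh'<1/12$ and $\lambda\Delta'<1/3$, so the containment hypothesis of Lemma~\ref{lem:transf} holds.

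It then remains to confirm that after scaling every cross distance lies outside $[1-100\sh'^2,\,1+100\sh'^2]$ on the correct side. The parallel edges and the remaining edges of $G^*$, now of length at most $\lambda(1-q''\sh)\le\lambda=1-c\sh^2$, fall below $1-100\sh'^2$; the non-edges, at least $\lambda(1+q''\sh)=1+q''\sh-c\sh^2-cq''\sh^3$, exceed $1+100\sh'^2$. Both inequalities hold once $\sh$ is small enough that $q''\sh$ dominates the $O(\sh^2)$ error terms and $c>100(1+c\sh)^2$ (both satisfied for $c=101$ and small $\sh$). With these choices $g$ meets all hypotheses of Lemma~\ref{lem:transf}, so $\tau\comp g$ is a UDG-representation of $\overline G$, completing the proof. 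The main difficulty is exactly this constant-juggling needed to push the distance-exactly-$1$ parallel edges just past the quadratic forbidden window while keeping the genuinely far pairs on their correct sides; all the geometric content is already carried by Theorem~\ref{c4representation} and Lemma~\ref{lem:transf}.
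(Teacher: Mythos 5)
Your proposal is correct and follows essentially the same route as the paper: invoke Theorem~\ref{c4representation} for $G^*$, apply a slight homothety to push the exact-unit distances (the parallel edges) off $1$ while preserving adjacency, verify the strip and window hypotheses, and conclude via Lemma~\ref{lem:transf}. The only difference is bookkeeping—the paper scales by $1-\tfrac{q''}{2}\sh$ (a linear shrink, giving a gap $\tfrac{q''}{4}\sh \gg 100(2\sh)^2$) while you scale by $1-101\sh^2$ and enlarge the strip half-width to $\sh+101\sh^2$—but both choices of constants work for sufficiently small $\sh$.
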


\begin{proof}
First let us choose some $\sh<\frac{q''}{1600}$ satisfying the conditions of Theorem~\ref{c4representation} 
with $q''$ as in the theorem.
By Theorem~\ref{c4representation} we know that ${G}^*$ has a UDG-representation $g$ such that:
\begin{enumerate}
	\item[1)] $g(U) \subseteq D_1$, $g(W) \subseteq D_2$, where $D_1=[0, \Delta] \times [-\sh,\sh]$,
	$D_2=[0,\Delta] \times [1-\sh, 1+\sh]$, for some $\Delta \in (0, 1/3)$; %and
%Fix $\Delta \in \left(0,\frac{1}{6}\right)$ and choose some $\sh<\frac{q''}{1600}$ satisfying the conditions of the 
%Theorem~\ref{c4representation} (with $q''$ as in Theorem~\ref{c4representation}).  
	\item[2)] for any two vertices $u \in U$ and $w \in W$, we have either 
	$\dist_g(u, w)=1$ or $|\dist_g(u, w)-1| \geq q''\sh$. 
\end{enumerate}
To employ Lemma \ref{lem:transf} for transforming the UDG-representation $g$ to a
UDG-representation of $\overline{G}$, we must get rid of unit distances. 
To this end we first apply scaling transformation 
$$
	h: (x,y) \rightarrow \left(\left(1-\frac{q''}{2}\sh\right)x, \left(1-\frac{q''}{2}\sh\right)y\right),
$$ 
which scales the whole map by a factor of $1-\frac{q''}{2}\sh$. 
One can observe that distance between images of any two vertices in different parts of $G^*$ 
under the map $h \comp g$ is either at most $1-\frac{q''\sh}{2}$ or at least
$$
	(1 + q''\sh) \times \left(1-\frac{q''}{2}\sh\right)=1 + \frac{q''}{2}\sh - \frac{q''^2}{2}\sh^2 >
	1 + \frac{q''}{2}\sh-\frac{q''}{4}\sh = 1+\frac{q''}{4}\sh,
$$
where the latter inequality is valid because $q'' < 1$ and $\sh < 1/2$.
%In this map $h \comp g$ one can observe that 
%the distances between any two points is either greater than $1+\frac{q''\sh}{2}$ or less than
%$(1- q''\sh) \times (1+\frac{q''}{2}\sh)=1-\frac{q''}{2}\sh-\frac{q''^2}{2}\sh^2 < 1-\frac{q''}{2}\sh$.
Therefore, for any vertices $u, w$ in different parts of $G^*$, we 
have $|\dist_{h \comp g}( u, w )-1| \geq \frac{q''}{4}\sh$. Also note that 
$\dist_{h \comp g}(u, w) > 1$ iff $\dist_g(u, w) > 1$, hence $h \comp g$ is a UDG-representation of $G^*$.
We must also note that the scaling affected the strips $D_1$ and $D_2$ as well. 
%Observing that the highest point is $(1+\sh)(1+\frac{q''}{2}\sh)<1+2\sh$ and that 
%$\Delta \times (1+\frac{q''}{2}\sh)<2 \Delta$, we can easily see that the image of $h \comp g$ falls into 
%the strips $[0, 2\Delta] \times [-2\sh, 2\sh]$ and $[0, 2\Delta] \times [1-2\sh, 1+2\sh]$, with 
%$2\Delta<\frac{1}{3}$ and $2\sh<\frac{1}{12}$.
Though, it is not hard to check that the images of $D_1$ and $D_2$ under the map $h \comp g$ fall into 
the strips $[0, \Delta] \times [-2\sh, 2\sh]$ and $[0, \Delta] \times [1-2\sh, 1+2\sh]$, respectively. 
%with $2\sh<\frac{1}{12}$.

Finally, the choice of $\sh$ guarantees that $2\sh<\frac{1}{12}$ and 
$|\dist_{h \comp g}( u, w )-1| \geq \frac{q''}{4} \sh > 100 (2\sh)^2$.
%, so for any two vertices $u, w$ in different parts of $G^*$, we have 
%$|\dist_{h \comp g}( u, w)-1| \geq \frac{q''}{4}\sh \geq 100 (2\sh)^2$. 
Hence Lemma~\ref{lem:transf} applies to the UDG-representation $h \comp g$ of $G^*$ and gives us 
a transformation map $\tau$, such that 
$\tau \comp h \comp g$ is a UDG-representation of $\overline{G}$. This finishes the proof of the theorem.
\end{proof}

%%%%%%%%%%%%%%%%%%%%%%%%%%
%%%%%%%%%%%%%%%%%%%%%%%%%%                
%% Concluding remarks and open problems
%%%%%%%%%%%%%%%%%%%%%%%%%%
%%%%%%%%%%%%%%%%%%%%%%%%%%

\section{Concluding remarks and open problems}\label{sec:conclusion}

In this work we identified infinitely many new minimal forbidden induced subgraphs for
the class of unit disk graphs. Using these results we provided structural characterization of some subclasses of co-bipartite UDGs. Obtaining structural characterization of the 
whole class of co-bipartite UDGs is a challenging research problem. 
%Solution of the problem may
%have consequence in the problem of implicit representation of the class of UDGs, since
%the class of UDGs admits an implicit representation if and only if the class of 
%co-bipartite UDGs does.
An open problem for which such a characterization may be useful is the problem of implicit representation of UDGs.
A hereditary class $\mathcal{G}$ admits an implicit representation if there exists a positive integer $k$
and a polynomial algorithm $A$ such that the vertices of every $n$-vertex graph $G \in \mathcal{G}$
can be assigned labels (binary strings) of length at most $k \log n$ such that given two vertex labels
of $G$ algorithm $A$ correctly decides adjacency of the corresponding vertices in $G$ \cite{KNR92}.
Notice that a class $\mathcal{G}$ admitting an implicit representation has $2^{O(n \log n)}$
$n$-vertex graphs as only $O(n \log n)$ bits is used for encoding each of these graphs.
In \cite{KNR92} Kannan et al. asked whether converse is true, i.e. is it true that every hereditary class
having $2^{O(n \log n)}$ $n$-vertex graphs admits an implicit representation? In \cite{Spinrad03} Spinrad
restated this question as a conjecture, which nowadays is known as \textit{the implicit graph conjecture}.
The class of UDGs satisfies the conditions of the conjecture, i.e. it is hereditary and contains
$2^{\Theta(n \log n)}$ $n$-vertex graphs (see \cite{Spinrad03} and \cite{McDM14}).
Though, no adjacency labeling scheme for the class is known \cite{Spinrad03}. 
A natural approach for such labeling 
would be to associate with every vertex the coordinates of its image under an UDG-representation in $\mathbb{Q}^2$. For this idea to work the integers (numerators and denominators) involved in 
coordinates of points in the UDG-representation
should be bounded by a polynomial of $n$. However, as shown in \cite{McDM13} this can not be 
guaranteed as there are $n$-vertex UDGs for which every UDG-representation necessarily uses
at least one integer of order $2^{2^{\Omega(n)}}$. Therefore some further ideas required
for tackling the problem. For example one may try to combine geometrical and structural properties
of UDGs maybe together with some additional tools (see e.g. \cite{ACLZ15}) to attack the problem of 
implicit representation of UDGs. In particular, from our structural results one can derive an implicit
representation for $C_4^*$-free co-bipartite UDGs and for $2K_2$-free co-bipartite UDGs. 
However, it remains unclear how to get an implicit representation for the whole class of co-bipartite UDGs, 
and it would be very interesting to see such results.

Interestingly, for every discovered co-bipartite forbidden subgraph and substructure its
bipartite complementary counterpart is also forbidden. This suggests that the class of
co-bipartite UDGs may be closed under bipartite complementation. This intuition is further
supported by the result that the bipartite complement of a $C_4^*$-free co-bipartite UDG 
is also (co-bipartite) UDG. These facts lead us to pose the following

\begin{conjecture*}
	For every co-bipartite UDG its bipartite complement is also co-bipartite UDG.
\end{conjecture*}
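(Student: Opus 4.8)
The plan is to reduce the statement to the two tools already established: Theorem~\ref{c4representation}, which supplies a well-controlled UDG-representation of the $C_4^*$-free co-bipartite graph $G^*$, and Lemma~\ref{lem:transf}, whose transformation $\tau$ converts such a representation into one of the bipartite complement $\overline{G}$. Since $G \in \mathcal{X}$, the co-bipartite graph $G^*$ lies in $\mathcal{X}^*$, so Theorem~\ref{c4representation} gives, for every sufficiently small $\sh$, a representation $g$ with the two parts confined to the thin horizontal strips $D_1=[0,\Delta]\times[-\sh,\sh]$ and $D_2=[0,\Delta]\times[1-\sh,1+\sh]$, and with every cross-part distance either exactly $1$ or separated from $1$ by at least $q''\sh$. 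The point is that $\tau$ sends the ``short'' cross-part distances (the adjacencies of $G^*$) to ``long'' ones and vice versa, which is exactly the toggling of bipartite adjacency needed to pass from $G^*$ to $\overline{G}$, while keeping each part of diameter below $1$.

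The obstacle is that Lemma~\ref{lem:transf} does not apply to $g$ directly: it requires every cross-part distance to avoid the whole interval $[1-100\sh^2,1+100\sh^2]$, whereas Theorem~\ref{c4representation} both tolerates distances equal to $1$ and only guarantees a gap of size $q''\sh$. I would therefore first preprocess $g$ by a single uniform scaling $h\colon(x,y)\mapsto((1-\tfrac{q''}{2}\sh)x,(1-\tfrac{q''}{2}\sh)y)$ centred at the origin. Scaling contracts every distance, so a distance equal to $1$ becomes $1-\tfrac{q''}{2}\sh<1$, while a distance at least $1+q''\sh$ becomes at least $(1+q''\sh)(1-\tfrac{q''}{2}\sh)>1+\tfrac{q''}{4}\sh$ (using $q''<1$ and $\sh<\tfrac12$). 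Hence under $h\comp g$ the ``greater-than-$1$ / less-than-$1$'' pattern is unchanged, so $h\comp g$ is still a representation of $G^*$, but now every cross-part distance differs from $1$ by at least $\tfrac{q''}{4}\sh$ and none equals $1$.

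It then remains to verify the geometric hypotheses of Lemma~\ref{lem:transf} for $h\comp g$. The scaling moves the strips only slightly, so a direct estimate places the images inside strips of half-width $2\sh$, namely $[0,\Delta]\times[-2\sh,2\sh]$ and $[0,\Delta]\times[1-2\sh,1+2\sh]$. Applying Lemma~\ref{lem:transf} with parameter $2\sh$ requires every cross-part distance to avoid $[1-100(2\sh)^2,1+100(2\sh)^2]=[1-400\sh^2,1+400\sh^2]$; since the gap achieved above is $\tfrac{q''}{4}\sh$, this holds as soon as $\tfrac{q''}{4}\sh>400\sh^2$, i.e. whenever $\sh<\tfrac{q''}{1600}$. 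Fixing such a $\sh$ (also small enough that $2\sh<\tfrac1{12}$) lets Lemma~\ref{lem:transf} apply, and $\tau\comp h\comp g$ is then the desired UDG-representation of $\overline{G}$.

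The main point to get right is this quantitative matching: the separation from $1$ furnished by Theorem~\ref{c4representation} is \emph{linear} in $\sh$, whereas the tolerance demanded by $\tau$ in Lemma~\ref{lem:transf} is \emph{quadratic} in $\sh$. For small $\sh$ the linear term dominates, which is precisely what makes the scaling-plus-transformation argument succeed; the only real care needed is to choose $\sh$ below the threshold $q''/1600$ and to confirm that the scaling corrupts neither the adjacency pattern nor the strip confinement.
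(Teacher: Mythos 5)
The statement you are asked to prove is the paper's concluding \emph{conjecture}, which quantifies over every co-bipartite UDG. Your argument, however, begins with ``Since $G \in \mathcal{X}$, the co-bipartite graph $G^*$ lies in $\mathcal{X}^*$'' --- that is, you silently assume the co-bipartite UDG under consideration is of the form $G^*$ for some $G$ in the class $\mathcal{X} = Free(S_{3,3,3}, C_3, C_4, C_5, C_6^{+3c}, C_6^{+3nc}, C_6^{+2l2}, C_7, C_8, \ldots)$. That assumption is exactly the restriction to $C_4^*$-free co-bipartite UDGs, i.e.\ to the class $\mathcal{Y} = \mathcal{X}^*$. An arbitrary co-bipartite UDG may contain a $C_4$ among its cross edges, and for such a graph Theorem~\ref{c4representation} --- the sole source of the strip-confined, gap-separated representation that your argument feeds into Lemma~\ref{lem:transf} --- simply does not apply: its hypothesis is ``$G \in \mathcal{X}^*$'', and its proof rests on the structural characterization of $\mathcal{X}$ as generalized hexagonal caterpillars, which is unavailable once $C_4^*$ is allowed. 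Without some analogous controlled representation (both parts in narrow strips at distance roughly $1$, all cross-part distances bounded away from $1$ by a term linear in the strip width), there is nothing to scale and nothing for $\tau$ to act on, so the gap is not a fixable technicality in your estimates; the entire approach presupposes the $C_4^*$-free structure.

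What you have written is, in substance, the paper's own proof of its final theorem in Section~\ref{ss:2K2rep}: the same scaling $h$ by the factor $1-\frac{q''}{2}\sh$, the same threshold $\sh < \frac{q''}{1600}$, the same bookkeeping with the widened strips of half-width $2\sh$ and the condition $2\sh < \frac{1}{12}$. That theorem establishes the conjecture only for graphs in $\mathcal{X}$ (equivalently, it shows $\mathcal{Z} = \overline{\mathcal{X}}$), and it is precisely the evidence on which the paper bases the conjecture --- the paper itself then poses the general statement as open, suggesting that one would need to show that \emph{any} representation of a co-bipartite UDG can be transformed in the spirit of Lemma~\ref{lem:transf}. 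So your proposal correctly reproduces the known special case but does not prove the stated conjecture.
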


\noindent
One of the possible approaches to prove this conjecture is, similarly to the proof of 
Lemma \ref{lem:transf}, to show that a representation of a co-bipartite UDG can be 
transformed into a representation of its bipartite complementation.

Another interesting research direction is to investigate systematically properties of edge 
asteroid triple free graphs
as it was done for asteroidal triple free graphs \cite{COS97}. Similarly to co-bipartite UDGs
edge asteroid triples arose in forbidden subgraph characterizations of several other graph
classes such as co-bipartite circular arc graphs \cite{FHH99} and bipartite 2-directional
orthogonal ray graphs \cite{STU10}. However, knowledge about edge asteroid triple free graphs is sporadic, and it would be interesting to study in a consistent manner properties of these graphs, especially, of those graphs which are bipartite.

%%%%%%%%%%%%%%%%%%%%%%%%%%%%%%%%%%%%%%%%%%%
%% Appendix
%%%%%%%%%%%%%%%%%%%%%%%%%%%%%%%%%%%%%%%%%%%
\section*{Appendices}

\renewcommand{\thesubsection}{\Alph{subsection}}

\subsection{Addendum to the proof of Theorem \ref{basiclobster}}\label{app:T15}

\subsubsection*{Distances between points in $f(C_1)$ and $f(C_2)$}

We split the arguments into cases where we argue about pairs of vertices in $S_1 \times S_2$
for different subsets $S_1 \subseteq C_1, S_2 \subseteq C_2$.
For each pair of vertices we show that the distance between their images is at most 1
if and only if the vertices are adjacent in $G^*$ (see Figure \ref{c4freelobster}).

\begin{enumerate}
	\item $S_1 = \GG_1, S_2 = \GG_2$
	\begin{enumerate}
		\item Edges of $G^*$ between the vertices in $S_1$ and $S_2$: $\{g_i g_j  : j = i \pm 1 \}$.
		\begin{enumerate}
			\item For $j = i \pm 1$, $\dist_f(g_i, g_{j}) = \sqrt{1-\mm^2 + \mm^2} =1$.
			\item For $j \neq i \pm 1$, $\dist_f(g_i, g_{j}) \geq \dist_f(g_i, g_{i+3})=\sqrt{1-\mm^2+(3\mm)^2}=\sqrt{1+8\mm^2} \geq 1+2\mm^2 $.
		\end{enumerate}
	\end{enumerate}
	
	\item $S_1 = \GG_1, S_2 = \BB_2 \cup \RR_2$ or $S_1 = \BB_1 \cup \RR_1, S_2 = \GG_2$
	\begin{enumerate}
		\item Edges of $G^*$ between the vertices in $S_1$ and $S_2$: $\{g_i b_i  : i = 1, \ldots, n \}$.
		\begin{enumerate}
			\item $\dist_f(g_i, b_i)=1-\frac{1-\sqrt{1-\mm^2}}{2}=\frac{1}{2}+\frac{\sqrt{1-\mm^2}}{2} \leq \frac{1}{2}+\frac{1-\mm^2/2}{2}=1-\frac{\mm^2}{4}$.

			\item The distances between $f(g_i)$ and $f(b_j)$ with $j \neq i$ or 
			between $f(g_i)$ and $f(r_k)$ are at least 
			\begin{align*}
			\dist_f(g_i,r_{i+1}) = &\sqrt{\dist_f(g_i,b_i)^2+\dist_f(b_i,r_{i+1})^2} 
			\geq \sqrt{\left(1-\frac{\mm^2}{4}-\frac{\mm^4}{4}\right)^2+\mm^2} \\
			 \geq & 
			 \sqrt{\left(1-\frac{\mm^2}{4}-\frac{\mm^2}{8}\right)^2+\mm^2}
			 \geq \sqrt{1-\frac{3}{4}\mm^2 + \mm^2}=\sqrt{1+\frac{1}{4}\mm^2} 
			 \geq 1+\frac{1}{16}\mm^2.
			 \end{align*}
           		 Note that the first inequality uses our basic inequality (1) and for the second we used the fact that $\mm \leq \sqrt{\frac{1}{2}}$. 	
	\end{enumerate}	
	\end{enumerate}
	\item $S_1 = \RR_1 \cup \BB_1, S_2 = \RR_2 \cup \BB_2$
	\begin{enumerate}
		\item Edges of $G^*$ between the vertices in $S_1$ and $S_2$: $\{r_i b_i  : i = 1, \ldots, n \}$.
		\begin{enumerate}
			\item $\dist_f(r_i,b_i)=1$.
			\item The distances between any other two points one on $L_1$ and another on $L_4$ are 
			at least $\sqrt{1+\mm^2} \geq 1+\frac{1}{4}\mm^2$.
		\end{enumerate}
	\end{enumerate}
\end{enumerate}

\noindent
Observe that we have proved that for any two vertices $v \in S_1$ and $w \in S_2$, 
either $\dist_f(v,w)=1$ or $|\dist_f(v,w) -1| \geq \frac{1}{16} \mu^2$.

\subsection{Addendum to the proof of Theorem \ref{th:repBasicGraphs}}

\subsubsection{Proof of Claim 4}\label{app:claim4}

Here we verify the following claim from the proof of Theorem \ref{th:repBasicGraphs} 
(see Figure \ref{fig:2hex_vert_representation} for illustration)

\begin{itemize}[leftmargin=0px]
	\item[] \textbf{Claim 4.} Denote the midpoints of $[b_1, y_{12}]$, $[y_{12}, g_2]$, $[y_{34}, g_4]$, 
	$[y_{45}, b_5]$ by $b_1'', R_2, R_4, b_5'$, respectively. Then, for 
	$x \in \{b_1, b_1'', R_2, R_4, g_4, r_4, b_5',b_5\}$ we have $\dist(g_3', x)>1$ and for 
	$x \in \{g_2, r_2, b_3\}$ we have $\dist (g_3', x)<1$.    
\end{itemize}

\noindent
\textbf{Proof.}
We prove the claim by direct estimation of distances for different pairs $(x,y)$ of points:

\begin{enumerate}
	\item $(g'_3, b_1)$: 
	$\dist(g_3', b_1)> \dist(x_{23}, b_1)=\dist(g_1,y_{23}) \geq \sqrt{1+\epsilon^2} \geq 1+ \epsilon^2/4$ by Claim 2.
	
	\item $(g'_3, b_1'')$: 
	as $\textup{Conv}(x_{23},b_1,b_1'', g_3')$ is a parallelogram, 
	$\dist(g_3', b''_1)=\dist(x_{23}, b_1)=\dist(g_1,y_{23}) \geq \sqrt{1+\epsilon^2} \geq 1+ \epsilon^2/4$ by Claim 2.

	\item $(g'_3, g_2)$: 
	$\dist(g_3', g_2)=\sqrt{1-\epsilon^2} \leq 1-\epsilon^2/2$.
	
	\item $(g'_3, b_3)$: 
	$\dist(g_3', b_3)<\dist(x_{23}, b_3) = \dist(g_1,y_{12}) \leq \sqrt{1-(\epsilon/2)^2} \leq 1-\epsilon^2/8$ by Claim 1.
	
	\item $(g_3', y)$, where $y \in \{g_4, r_4, b_5', b_5\}$:
	 $\dist(g_3', y) > \dist(g_3', g_4) \geq \sqrt{1+\epsilon^2} \geq 1+\epsilon^2/4$, 
	 follows by applying the Law of cosines to triangle $\triangle g_3'g_3g_4$ as $\dist(g_4,g_3)=1$,
	 $\dist(g_3,g_3')=\epsilon$ and $90 < \angle g_3'g_3g_4 < 180$.
	
	\item $(g'_3, R_2)$: 
	denote $\angle x_{23} g_2 g_3'= \alpha$, and notice that $\sin(\alpha)= \epsilon$, 
	$\dist(g_2, g_3')=\sqrt{1-\epsilon^2}$, and $\angle g_3' g_2 R_2=\alpha+90$, then
	 
	\begin{align*}
		\dist(g_3', R_2)^2 = & (\epsilon/2)^2+1-\epsilon^2-2\cos(\alpha+90) 
		(\epsilon/2)\sqrt{1-\epsilon^2} \\
		= & 1-3\epsilon^2/4+\sin(\alpha)\epsilon\sqrt{1-\epsilon^2} \\
		= & 1-3\epsilon^2/4+\epsilon^2\sqrt{1-\epsilon^2} \\
		> & 1+\epsilon^2/8
	\end{align*}
	whenever $\sqrt{1-\epsilon^2}>7/8$, which holds for $\epsilon < \sqrt{15}/8$. 
	Hence, $\dist(g_3', R_2)> \sqrt{1+\epsilon^2/8} \geq 1+\epsilon^2/32$.
	
	\item $(g'_3, r_2)$:
	notice that $\angle g_3' g_2 r_2=\gamma<90$, thus 
	\begin{align*}
		\dist(g_3',r_2)^2 = & \dist(g_2, g_3')^2+\dist(g_2, r_2)^2 -2\cos(\gamma)\dist(g_2,g_3')\dist(g_2, r_2) \\
		< &\dist(g_2, g_3')^2+\dist(g_2, r_2)^2 \\
		= & 1-\epsilon^2+(\epsilon/2)^2 \\
		= & 1 - 3\epsilon^2/4,
	\end{align*}
	 that is $\dist(g_3',r_2) < \sqrt{1 - 3\epsilon^2/4} \leq 1-3\epsilon^2/8 $.
	 
	\item $(g'_3, R_4)$:
	by comparing the slope of $[y_{34}, g_4]$ and $[g_3, g_3']$ and denoting the point $x$ to be the middle
	point of $[g_3, g_3']$, one can easily see that 
	$$
		\dist(g_3', R_4) \geq \dist(x, g_4) > \dist (g_3, g_4)=1.
	$$
	Indeed, one can obtain $\dist(g_3', R_4) \geq \dist(x, g_4) > \sqrt{1+(\epsilon/2)^2} \geq 1+ \epsilon^2/16$.

\end{enumerate}

\noindent
Notice, in particular, that for $x$ as in the statement of Claim 4, we have proved 
$|\dist(g_3', x)-1|>\epsilon^2/32$.

\qed

\subsubsection{Proof that $\dist(a_3, r_2)>1$} \label{app:dist_a3_r2}

First, we observe that $[g_3',a_3]$ is parallel to $[r_2, R_2]$. 
Intuitively, both of the intervals have length close to $\epsilon$, and we also know that 
$\dist(g_3', R_2) \geq \sqrt{1+\epsilon^2/8}$. By the triangle inequality, we can deduce that 
$$
	\dist(a_3, r_2) \geq \dist(g_3', R_2) - |\dist(g_3', a_3)-\dist(r_2,R_2)|.
$$
We would like to show that $|\dist(g_3', a_3)-\dist(r_2,R_2)|$ is small.
To calculate these distances let us denote $\beta = \angle b_2 g_2 x_{23}$ and 
$\alpha = \angle x_{23} g_2 g_3'$. Then, $\angle g_2 g_3 x_{23} =90-\alpha$ and
$\angle g_2 g_3 b_3 =\angle g_3 g_2 b_2 =2\alpha+\beta$. Thus, $\angle b_3 g_3 g_3' =90-\alpha+2\alpha+\beta=90+\alpha+\beta$. Hence, 
$\angle g_3' g_3 a_3 =90-\alpha-\beta$ and we can calculate 
$$
	\dist(g_3', a_3)=\sin(90-\alpha-\beta)\epsilon=\cos(\alpha+\beta)\epsilon.
$$
Further, by noticing that $\angle b_2 g_2 r_2 =90-\beta$ we calculate 
$$
	\dist(r_2, R_2)=2\sin(90-\beta)(\epsilon/2)=\cos(\beta)\epsilon.
$$
Now, $\cos(\beta)=\frac{1}{\sqrt{1+\epsilon^2}}$, $\sin(\beta)=\frac{\epsilon}{\sqrt{1+\epsilon^2}}$, $\cos(\alpha)=\sqrt{1-\epsilon^2}$, $\sin(\alpha)=\epsilon$, and therefore
$$
	\cos(\alpha+\beta)=\cos(\alpha)\cos(\beta)-\sin(\alpha)\sin(\beta)=\frac{\sqrt{1-\epsilon^2}}{\sqrt{1+\epsilon^2}}-\frac{\epsilon^2}{\sqrt{1+\epsilon^2}}.
$$
Thus
\begin{align*}
	|\dist(g_3', a_3)-\dist(r_2,R_2)| =& \epsilon \cos(\beta) - \epsilon \cos(\alpha+\beta) \\
	= & \epsilon \left( \frac{1}{\sqrt{1+\epsilon^2}}-\frac{\sqrt{1-\epsilon^2}}{\sqrt{1+\epsilon^2}}+
	\frac{\epsilon^2}{\sqrt{1+\epsilon^2}} \right) \\
	\leq &\epsilon \left(\frac{1-(1-\epsilon^2)+\epsilon^2}{\sqrt{1+\epsilon^2}} \right) \\
	= & \frac{2\epsilon^3}{\sqrt{1+\epsilon^2}} \leq 2 \epsilon^3.
\end{align*} 
Finally, we conclude that 
$$
	\dist(a_3, r_2) \geq \sqrt{1+\epsilon^2/8} - 2 \epsilon^3 \geq 1+
	\epsilon^2/32  - 2\epsilon^3 \geq 1+ \epsilon^2/64,
$$ whenever $\epsilon < 1/128$. 

\qed

\subsection{Addendum to the proof of Lemma \ref{lem:transf}}\label{app:LemTrans}

\subsubsection*{Lower and upper bounds on $\ff$}

Below we derive the following bounds on $\ff$
$$
a+ \frac{\beta^2}{2}-\frac{7\beta^4}{6}-2a^2\beta^2 \leq \ff \leq a+\frac{\beta^2}{2}+\frac{\beta^4}{2}.
$$

\noindent
\textbf{Proof.}
One can apply the law of cosines to the triangle $\triangle \tau(A) \FF O$ and obtain the equation 
$$
\dist(\tau(A),O)^2 + \dist(O,\FF)^2-2\cos(\angle\tau(A)O\FF)\dist(\tau(A),O)\dist(O,\FF)= \dist(\tau(A),\FF)^2.
$$
Inserting the values $\dist(\tau(A),O)=\frac{1}{2}-a$, $\dist(\tau(A),\FF)=1$ and $\cos(\angle\tau(A)O\tau(B))=\cos(\pi-2\beta)=-\cos(2\beta)$, we get the equation 
$$
	\left(\frac{1}{2}-a\right)^2+\dist(O,\FF)^2+2\cos(2\beta)\left(\frac{1}{2}-a\right)\dist(O,\FF)=1.
$$
Solving the quadratic equation yields 
$$
	\dist(O,\FF)=-\cos(2\beta)\left(\frac{1}{2}-a\right)\pm\sqrt{1-\left(\frac{1}{2}-a\right)^2
	+\left(\cos(2\beta)\left(\frac{1}{2}-a\right)\right)^2}.
$$ 
This equation has one positive and one negative root, and therefore we must choose the positive sign. Hence, 

\begin{align*}
\ff = & \dist(O,\FF)-\frac{1}{2} \\
= & -\frac{1}{2}-\frac{\cos(2\beta)}{2}+a\cos(2\beta)+\sqrt{1+\left(\frac{1}{2}-a\right)^2(\cos(2\beta)^2-1)}\\
= & a-\cos^2(\beta)-2a\sin^2(\beta)+\sqrt{1-\left(\frac{1}{2}-a\right)^2\sin^2(2\beta)}.
\end{align*}

\noindent
Consider now
\begin{align*}
	K =  \left(\frac{1}{2}-a\right)^2 \sin^2(2\beta) 
	=  \left(\frac{1}{2}-a\right)^2 4\sin^2(\beta)\cos^2(\beta)
	=  (1-2a)^2 \sin^2(\beta)(1-\sin^2(\beta)).
\end{align*}
Expanding the brackets, one deduces that 
$$
	K=\sin^2(\beta)-4a \sin^2(\beta)+4a^2 \sin^2(\beta)-\sin^4(\beta)+4a\sin^4(\beta)-4a^2\sin^4(\beta) \geq \sin^2(\beta)-4a\sin^2(\beta)-\sin^4(\beta),
$$
because both $4a^2 \sin^2(\beta) $ and $4a\sin^4(\beta)-4a^2\sin^4(\beta)$ are non-negative. 
This allows us to obtain the desired upper bound for $\ff$: 

\begin{align*} 
\ff & = a-\cos^2(\beta)-2a\sin^2(\beta)+\sqrt{1-K} \\
  & \leq a-\cos^2(\beta)-2a\sin^2(\beta)+1-\frac{K}{2} \\
   &   \leq a-\cos^2(\beta)+1-2a\sin^2(\beta)-\frac{\sin^2(\beta)}{2}+2a\sin^2(\beta)+\frac{\sin^4(\beta)}{2}\\
   &   =a+\sin^2(\beta)-\frac{\sin^2(\beta)}{2}+\frac{\sin^4(\beta)}{2} \\
   &    \leq a+\frac{\beta^2}{2}+\frac{\beta^4}{2}.
\end{align*} 

\noindent
It is also easy to derive that $K \leq (1-2a)^2 \sin^2(\beta)$ and in particular 
$$
	K^2 \leq (1-2a)^4\sin^4(\beta) \leq (1+2\sh)^4\sin^4(\beta) \leq (1+2/12)^4 \sin^4(\beta) \leq 2\sin^4(\beta).
$$ 
This allows us to deduce the lower bound for $\ff$:

\begin{align*}
\ff  & = a-\cos^2(\beta)-2a\sin^2(\beta)+\sqrt{1-K} \\
   & \geq a-\cos^2(\beta)-2a\sin^2(\beta)+1-\frac{K}{2}-\frac{K^2}{2} \\
   & \geq a-\cos^2(\beta)+1-2a\sin^2(\beta)-\frac{\sin^2(\beta)}{2}+2a\sin^2(\beta)-2a^2\sin^2(\beta)-
   \frac{2 \sin^4(\beta)}{2}\\
     &    \geq a+\sin^2(\beta)-\frac{\sin^2(\beta)}{2}-2a^2\sin^2(\beta)-\sin^4(\beta)\\
       &  \geq a+\frac{\sin^2(\beta)}{2}-2a^2\sin^2(\beta)-\sin^4(\beta)\\
         &\geq a+ \frac{1}{2}\left(\beta-\frac{\beta^3}{6}\right)^2-2a^2\beta^2-\beta^4\\
          &\geq a + \frac{\beta^2}{2}-\frac{\beta^4}{6}-2a^2\beta^2-\beta^4\\
	& \geq a+ \frac{\beta^2}{2}-\frac{7\beta^4}{6}-2a^2\beta^2.
\end{align*}

\qed

\end{document}